\numberwithin{equation}{section}
\newtheorem{theorem}{Theorem}[section]
\newtheorem{lemma}[theorem]{Lemma}
\newtheorem{proposition}[theorem]{Proposition}
\newtheorem{corollary}[theorem]{Corollary}
\theoremstyle{definition}
\newtheorem{example}[theorem]{Example}
\theoremstyle{remark}
\newtheorem{remark}[theorem]{\bf{Remark}}
\newcommand{\rcross}{{\triangleright\!\!\!<}}   
\newcommand{\lcross}{{>\!\!\!\triangleleft}}
\newcommand{\lbiprod}{{>\!\!\!\triangleleft\kern-.33em\cdot}}
\newcommand{\rbiprod}{{\cdot\kern-.33em\triangleright\!\!\!<}}
\newcommand{\rcocross}{{\blacktriangleright\!\!<}}
\newcommand{\lcocross}{{>\!\!\!\blacktriangleleft}}
\newcommand{\dotop}{\cdot_{\mathrm{\underline{op}}}}
\newcommand{\delcop}{\underline{\Delta}_{\mathrm{\underline{cop}}}}
\newcommand{\cop}{{\mathrm{\underline{cop}}}}
\newcommand{\op}{{\mathrm{\underline{op}}}}
\newcommand{\mathleft}{\@fleqntrue\@mathmargin0pt}
\newcommand{\mathcenter}{\@fleqnfalse}
\newcommand{\extd}{{\rm d}}
\renewcommand{\o}{{}_{\scriptscriptstyle(1)}} 
\renewcommand{\t}{{}_{\scriptscriptstyle(2)}}
\renewcommand{\th}{{}_{\scriptscriptstyle(3)}}
\newcommand{\fo}{{}_{\scriptscriptstyle(4)}}
\newcommand{\fiv}{{}_{\scriptscriptstyle(5)}}
\newcommand{\six}{{}_{\scriptscriptstyle(6)}}
\newcommand{\sev}{{}_{\scriptscriptstyle(7)}}
\newcommand{\ei}{{}_{\scriptscriptstyle(8)}}
\newcommand{\nine}{{}_{\scriptscriptstyle(9)}}
\newcommand{\infi}{{}_{\scriptscriptstyle(\infty)}}
\newcommand{\mo}{{\scriptscriptstyle -(1)}}
\newcommand{\mt}{{\scriptscriptstyle -(2)}}
\newcommand{\z}{{}_{\scriptscriptstyle(0)}}
\newcommand{\cg}{{\mathfrak g}}
\newcommand{\cc}{{\mathfrak c}}
\newcommand{\cu}{{\mathfrak u}}
\newcommand{\C}{{\Bbb C}}
\newcommand{\Z}{{\Bbb Z}}
\newcommand{\kk}{k}
\newcommand{\eps}{\epsilon}
\newcommand{\CR}{{\mathcal R}}
\newcommand{\CC}{{\mathcal C}}
\newcommand{\CF}{{\mathcal F}}
\newcommand{\CM}{{\mathcal M}}
\newcommand{\tens}{\otimes}
\newcommand{\id}{{\rm id}}
\newcommand{\atild}{\tilde{a}}
\newcommand{\btild}{\tilde{b}}
\newcommand{\ctild}{\tilde{c}}
\newcommand{\dtild}{\tilde{d}}
\renewcommand{\>}{{\rangle}}
\newcommand{\la}{{\triangleright}}
\newcommand{\ra}{{\triangleleft}}
\title{Co-double bosonisation and dual bases of $c_{q}[SL_2]$ and $c_q[SL_3]$}
\author{Ryan Aziz and Shahn Majid}
\address{School of Mathematical Sciences\\ Queen Mary University of London \\ Mile End Rd, London E1 4NS }
\email{r.aziz@qmul.ac.uk, s.majid@qmul.ac.uk}
\thanks{LPDP Indonesian Endowment Fund For Education}
\begin{document}
\maketitle

\begin{abstract} We find a dual version of a previous double-bosonisation theorem whereby each finite-dimensional  braided-Hopf algebra $B$ in the category of comodules of a coquasitriangular Hopf algebra $A$  has an associated coquasitriangular Hopf algebra $B^{\underline{\rm op}}\lbiprod A \rbiprod B^*$. As an application we find new generators for $c_q[SL_{2}]$ reduced at $q$ a primitive odd root of unity with the remarkable property that  their monomials are essentially a dual basis to the standard PBW basis of the reduced Drinfeld-Jimbo quantum enveloping algebra $u_q(sl_{2})$. Our methods apply in principle for general $c_{q}[G]$ as we demonstrate for $c_q[SL_3]$ at certain odd roots of unity.
\end{abstract}

\section{Introduction}
  
Quantum groups such as  $U_q(\cg)$ associated to complex semisimple Lie algebras \cite{Dri,Jimbo}, and their finite-dimensional quotients $u_q(\cg)$ at $q$ a primitive $n$-th root of unity, have been extensively studied since the 1980s and 1990s respectively. The latter are covered in several texts  such as \cite{Lus,Jan}, although precise definitions and the relation to Lusztig's celebrated divided-difference versions of $U_q(\cg)$ are quite subtle and depend on the precise root when $n$ is small. See \cite{Len}  for a recent work. There are also corresponding `coordinate algebra' quantum groups $C_q[G]$ and in principle reduced  finite-dimensional quotients $c_q[G]$ although again best understood for specific cases \cite{CL}. 

In spite of some extensive literature, one problem which we believe to be open till now even for the simplest case of $c_q[SL_2]$ is a description of its  dual basis in terms of the generators and relations. Here $u_q(sl_2)$ has generators $F,K,E$ with the relations of $U_q(sl_2)$ and additionally $E^n=F^n=0, K^n=1$, and PBW basis $\{F^i K^jE^k\}$ for $0\le i,j,k<n$. The dual Hopf algebra $c_q[SL_2]$ is a
quotient of $\C_q[SL_2]$ with its standard matrix entry generators $a,b,c,d$, and the additional relations $a^{n}=d^{n}=1,  b^{n}=c^{n}=0$ to give a Hopf algebra extension
\[ \C[SL_2]\hookrightarrow \C_q[SL_2]\twoheadrightarrow c_q[SL_2].\]
This $c_q[SL_2]$ has an obvious monomial basis $\{b^ia^jc^k\}$ but its Hopf algebra pairing with the PBW basis of $u_q(sl_2)$ is rather complicated (it can be related to the representation theory of the quantum group) and this does not constitute a dual basis even up to normalisation. Knowing a basis and dual basis is equivalent to knowing the canonical coevaluation element, which has many applications including Hopf algebra Fourier transform. Here we solve the dual basis problem for $c_q[SL_2]$  at $q$ a primitive odd root of unity in  Corollary~\ref{dual basis},  finding new generators $X,t,Y$ of $c_q[SL_2]$ such that normalised monomials $\{X^it^jY^k\}$ {\em are} essentially a dual basis  in the sense  of being dually paired by 
\[ \<X^it^jY^k,F^{i'}K^{j'}E^{k'}\>=\delta_{ii'}\delta_{kk'}q^{jj'}[i]_{q^{-1}}![k]_q!,\]
where $[i]_q$ etc. are $q$-integers. An actual dual basis immediately follows. Section~6 similarly computes the dual basis for $c_q[SL_3]$ at certain roots of unity including all $n$ that are prime and congruent to $\pm 1$ mod 12. 

This  result depends on a general `braided quantum double' or  {\em double-bosonisation}  construction \cite{db,db2,Primer} which associates to each finite-dimensional braided-Hopf algebra (`braided group') $B$ living in the category of modules over a quasitriangular Hopf algebra $H$,  a new quasitriangular Hopf algebra 
\[ B^{*\mathrm{\underline{cop}}} \lbiprod H \rbiprod B=:D_H(B),\]
where the second notation has also been used in the literature in line with the view of this in \cite{db2} as the closest one can come to the bosonisation of a  `braided double' of $B$ (the latter does not itself exist  in the strictly braided case).  Thus, for any $n$ we take $H=\C[K]/(K^n-1)$ with its natural quasitriangular structure $\CR_K$ so that its modules are the category of $\Z$-graded spaces with braiding given by a power of $q$ according to the degrees. We take $B=\C[E]/(E^n)$ and $B^{*\mathrm{\underline{cop}}}=\C[F]/(F^n)$ and obtain a version 
\[ B^{*\mathrm{\underline{cop}}} \lbiprod H \rbiprod B =\cu_q(sl_2)\cong\begin{cases} u_p(sl_2) & n=2m+1,\ p=q^{-m};\ p^2=q,\\ {\rm something\ else} & n\ {\rm even.}\end{cases}\]
To illustrate the even case, $\cu_{-1}(sl_2)$ in Example~\ref{exq2} is an interesting  8-dimensional strictly quasitriangular and self-dual Hopf algebra presumably known elsewhere. Our approach to the dual basis problem is to work out the dual version or {\em co-double bosonisation} and use this to construct the dual of $\cu_q(sl_2)$  in the dual form 
\[ B^{\mathrm{\underline{op}}}\lbiprod A \rbiprod B^{*}=coD_A(B),\]
where each tensor factor pairs with the corresponding factor on the $\cu_q(sl_2)$ side. This dual version of double bosonisation is in Section~3 and is conceptually given by reversing arrows in the original construction, but in practice takes a great deal of care to trace through all the layers of the construction. Moreover, we do not want to be limited to finite-dimensional $A$ and give a self-contained algebraic proof for $A$ any coquasitriangular Hopf algebra and $B$ a finite-dimensional braided group living in the category of its comodules. When $A$ is also finite dimensional, the resulting object has a Hopf algebra duality pairing  with the double bosonisation by, schematically,  $\langle B^{\mathrm{\underline{op}}}\lbiprod A \rbiprod B^{*}, B^{*\mathrm{\underline{cop}}}\lbiprod H \rbiprod B \rangle = \langle B^{\mathrm{\underline{op}}}, B^{*\mathrm{\underline{cop}}} \rangle \langle A, H \rangle \langle B^{*},B \rangle$. Unlike double bosonisation, the co-double bosonisation has coalgebra surjections to the constituents $B^{\underline{\rm op}},A,B^*$ making calculations for it  harder than the original version. 

In Section~\ref{Sec4} we take $B=\mathbb{C}[X]/(X^n)$ and its dual $B^{*}=\mathbb{C}[Y]/(Y^n)$, again braided-lines but this time viewed in the category of $A=\mathbb{C}[t]/(t^n-1)$-comodules with its standard coquasitriangular structure $\CR(t,t)=q$ (so that its comodules form the same braided category of $\Z$-graded vector spaces as before). Then the co-double bosonisation gives a coquasitriangular Hopf algebra  $\cc_q[SL_2]=c_p[SL_2]$ when $n$ is odd and some other  coquasitriangular Hopf algebra when $n$ is even. This is Theorem~\ref{cdbthm} with the dual basis result a corollary of the triangular decomposition.  As an application, Hopf algebra Fourier transform $\CF:\cc_q[SL_2]\to \cu_q[sl_2]$ is worked out in Section~\ref{Sec5} and shown to behave well with respect to the 3D-calculus of $c_p[SL_2]$. Another application of the canonical element for the pairing, which we do not discuss, is to provide the quasitriangular structure of the Drinfeld double $D(u_p(sl_2))$ of interest in 3D quantum gravity. 

Although the details rapidly become complicated, Sections~~\ref{upsl3} and~\ref{cpsl3} similarly study the next iteration, $\cc_q[SL_3]$ as dually paired to $\cu_q(sl_3)$, where $H=\widetilde{\cu_q(sl_2)}$ is a certain central extension and $B=\cc_q^2$ denotes the usual quantum-braided plane but reduced at the root of unity by making the generators nilpotent of order $n$. The central extension requires an integer $\beta$ such that $\beta^2=3$ mod $n$, where we assume that $n=2m+1$ is odd and set $p=q^{-m}$. The dual of $B$ has a similar form and double-bosonisation gives us 
\[ \cu_q(sl_3)\cong\begin{cases}u_p(sl_3) & m>1,\\  (u_p(sl_3)/\<K_1-K_2\>)\tens(\C[g]/(g^n-1)) & m=1,\end{cases}\]
where the $m=1$ case equates the two Cartan generators of the usual quantum group. This quotient is necessarily quasitriangular by our construction whereas we are not clear if this is the case for $u_p(sl_3)$ itself when $q^3=1$. We then construct the dual by $A=\widetilde{\cc_q[SL_2]}$ and similar quantum-braided planes now as Hopf algebras in its category comodules lead to a dual coquasitriangular Hopf  algebra $\cc_q[SL_3]$. For $m>1$ we show that this is isomorphic to  the usual $c_p[SL_3]$ while for $m=1$ we obtain  a central extension of a sub-Hopf algebra of $c_p[SL_3]$.  Clearly, one could go on to analyse other choices of $n$, as well as to look similarly at the next iteration for  $\cu_q(sl_4)$ and its duality with $\cc_q[SL_4]$, etc. Even at the second stage of $H=\widetilde{u_q(sl_2)}$, there are other potential choices for braided planes including some that give versions of $u_q(g_2)$ and $u_q(sp_2)$ (details will be given elsewhere) and others that have no classical picture at all. Existence was covered at the semiclassical or Lie bialgebra level in \cite{Ma:blie} as an inductive process that adds one to the rank of the Lie algebra at each iteration, and is also clear for generic $U_q(\cg)$ in a suitable setting \cite{db}. Section~\ref{fermionic} illustrates a non-classical choice where $A=\C_q[GL_2]$ is not finite dimensional, $q$ is generic and $B=\C_q^{0|2}$ is the `fermionic quantum-braided plane'  in the category of $A$-comodules. This leads to an exotic but still coquasitriangular version of  $\C_q[SL_3]$ with some matrix entries `fermionic'. We also note that the inductive approach, even after multiple iterations,  preserves a triangular decomposition in which the accumulated central generators form the `Cartan' factor, the accumulated braided groups $B$ form a `positive' braided group on one side and their duals form a `negative' braided group on the other side. For the classical families, this recovers versions of $u_q(n_\pm)$ but now as braided-Hopf algebras with dual bases.

\section{Preliminaries}\label{Sec2}

We recall  the notations and facts about Drinfeld's (co)quasitriangular Hopf algebras as can be found in several texts, for example  \cite{Foundation,Primer}, braided groups and bosonisation as introduced in \cite{AlgBr, Braid, Bos, Foundation} and double bosonisation \cite{db, db2, Ma:Rem, Primer}. We also establish lemmas needed for a clean presentation of the latter and its dualisation.  
 
\subsection{Quasitriangular Hopf algebras} Recall that a Hopf algebra is $(H, \Delta, \epsilon, S)$ where $H$ is a unital algebra,  $\Delta : H \to H \otimes H$ and $\eps:H\to \kk$ form a coalgebra and are algebra maps, and there is an antipode $S : H \to H$ obeying $(Sh\o)h\t=\eps(h)=h\o(Sh\t)$ for all $h\in H$. We use Sweedler's notation $\Delta h = h{\o}\otimes h{\t}$ (summation understood) and $\kk$ is the ground field. Modules/comodules of $H$ have a tensor product defined respectively by pull back/push out along the co/product.  Another Hopf algebra $A$ is `dually paired' if  $\langle\ ,\ \rangle : A\otimes H \to \kk$ makes the coalgebra and antipode on one side  adjoint to the algebra and antipode on the other (e.g., $\langle ab, h \rangle= \langle a,h{\o} \rangle \langle b,h{\t} \rangle$ for all $a,b\in A$ and $h\in H$). If $H$ is finite dimensional then we can take $A=H^{*}$.  

A Hopf algebra is {\em  quasitriangular} \cite{Dri} if equipped with invertible $\mathcal{R}=\mathcal{R}^{\o}\otimes \mathcal{R}^{\t}\in H\otimes H$ (summation understood) such that $(\Delta \otimes \mathrm{id})\mathcal{R}=\mathcal{R}_{13}\mathcal{R}_{23}$,  $(\mathrm{id}\otimes \Delta)\mathcal{R}=\mathcal{R}_{13}\mathcal{R}_{12}$ and $\mathrm{flip}\circ \Delta h =\mathcal{R}(\Delta h)\mathcal{R}^{-1}$. Here 
 $\mathcal{R}_{12}=\mathcal{R}\otimes 1$ and so forth. We denote by $\bar{H}$ the quasitriangular Hopf algebra which is  the same Hopf algebra as $H$  but with quasitriangular structure $\bar{\mathcal{R}}=\mathcal{R}^{-1}_{21}$. The dual notion, e.g. in  \cite{Braid}, of a {\em coquasitriangular} Hopf algebra $A$ is a Hopf algebra with  a convolution-invertible map $\mathcal{R}: A \otimes A \to k$ satisfying 
\begin{equation}\label{quabic} \mathcal{R}(ab,c)=\mathcal{R}(a,c{\o})\mathcal{R}(b,c{\t}),\quad \mathcal{R}(a,bc)=\mathcal{R}(a{\o},c)\mathcal{R}(a{\t},b),\end{equation}
\begin{equation}\label{quacom} a{\o}b{\o}\mathcal{R}(b{\t},a{\t})=\mathcal{R}(b{\o},a{\o})b{\t}a{\t}\end{equation}
 for all $a,b,c\in A$. We define $\bar A$ to  be the same Hopf algebra as $A$ but with coquasitriangular structure $\bar{\mathcal{R}}=\mathcal{R}^{-1}_{21}$. Equivalently, $\bar{\mathcal{R}}(a,b)=\mathcal{R}(Sb,a)$ for all $a,b \in \bar{A}$. It is shown in \cite{Braid,Foundation} that the antipode in this context is invertible.  
 
Let $H$ (resp. $A$)  be (co)quasitriangular. The monoidal categories of left or right (co)modules are braided in the sense of an isomorphism $\Psi_{V,W}:V\tens W\to W\tens V$  obeying axioms similar to the transposition map, but {\em not} $\Psi_{W,V}\Psi_{V,W}=\id$,  given by
\[\begin{aligned}
\Psi_{L}(v\otimes w)=&\mathcal{R}^{\t}\rhd w \otimes \mathcal{R}^{\o}\rhd v, & \Psi_{R}(v\otimes w)=& w\lhd \mathcal{R}^{\o}\otimes v \lhd \mathcal{R}^{\t},\\
\Psi^{L}(v\otimes w)=&\mathcal{R}(w^{\bar{\o}},v^{\bar{\o}})w^{\bar{\infi}}\otimes v^{\bar{\infi}}, & \Psi^{R}(v\otimes w)=& w^{\bar{\z}}\otimes v^{\bar{\z}}\mathcal{R}(v^{\bar{\o}},w^{\bar{\o}})
\end{aligned}\]
where $\Psi_{L}$ is the braiding for the left-modules category ${}_{H}\CM$ with action $\la$, $\Psi_R$ the same for right modules $\CM_H$ and action $\ra$,   $\Psi^{L}$  for the left-comodule category ${}^{A}\mathcal{M}$ with coaction $\Delta_Lv=v^{\bar{\o}}\tens v^{\bar{\infi}}$ and $\Psi^R$ for right-comodules $\CM^A$ with coaction denoted $\Delta_Rv=v^{\bar{\z}}\tens v^{\bar{\o}}$ (summations understood). 

\subsection{Bosonisation and cobosonisation} 

A left $H$-module algebra $B$ means a Hopf algebra $H$ acting from the left on an algebra $B$ such that  $h\la(bc)=(h\o\la b)(h\t \la c)$ and $h\la 1=\eps(h)$ for all $b,c\in B$ and $h\in H$. Equivalently, $B\in {}_H\CM$ as an algebra, i.e., an object
and the product and unit maps are morphisms. One has the familiar smash or cross product algebra which we denote  $B \lcross H$ built on $B\otimes H$ with $(b\otimes h)(c \otimes g)= b(h{\o}\rhd c)\otimes h{\t}g$ for all $b,c\in B$ and $h,g\in H$.  Similarly if $B\in \CM_H$ as an algebra there is a right cross product algebra $H \rcross B$ built on $H \otimes B$ with $(h\otimes b)(g \otimes c)= hg{\o}\otimes (b\lhd g{\t})c$. Similarly, given a coalgebra $B\in {}^H\CM$ (a left-$H$-comodule coalgebra or explicitly
\begin{align}
(\mathrm{id}\otimes \Delta)\Delta_{L}(b)=b{\o}^{\bar{\o}}b{\t}^{\bar{\o}}\otimes b{\o}^{\bar{\infi}}\otimes b{\t}^{\bar{\infi}},\quad (\id\tens\eps)\Delta_L=1\tens\eps \label{left-comod-coalg}
\end{align}
for all $b\in B$) one has a left cross coproduct coalgebra $B\lcocross H$ built on $B\otimes H$ with \begin{align*}
\Delta(b\otimes h)=& b{\o}\otimes b{\t}^{\bar{\o}}h{\o}\otimes b{\t}^{\bar{\infi}}\otimes h_{\t}. 
\end{align*}
Given a coalgebra $B\in \CM^H$ (so a right $H$-comodule coalgebra or 
\begin{align}
(\Delta \otimes \mathrm{id})\Delta_{R}(b)=b{\o}^{\bar{\z}}\otimes b{\t}^{\bar{\z}}\otimes b{\o}^{\bar{\o}}b{\t}^{\bar{\o}},\quad (\eps\tens\id)\Delta_R=\eps\tens 1 \label{right-comod-coalg}
\end{align}
for all $b\in B$)  there is a right cross coproduct coalgebra $H\rcocross B$ built on $H\otimes B$ with \begin{align*}
\Delta (h \otimes b)=& h{\o}\otimes b{\o}^{\bar{\z}}\otimes h{\t}b{\o}^{\bar{\o}}\otimes b{\t}. 
\end{align*}
We refer to \cite{Foundation} for details. When $H$ is quasitriangular there is a braided monoidal functor ${}_H\CM\hookrightarrow {}_H^H\CM$ in \cite{Ma:dou,Foundation} with a coaction induced  by the quasitriangular structure of $H$ so as to form a `crossed' or Yetter-Drinfeld module.  Similarly from the right and dually for  $A$ coquasitriangular via functors ${}^A\CM\hookrightarrow{}_A^A\CM$ and ${}^A\CM\hookrightarrow\CM{}_A^A$. The latter involve an action induced by the given coaction. 

We also need the notion of a `braided group' or Hopf algebra $B$ in a braided category $\CC$, the basic theory of which was worked out in \cite{Braid,Bos,AlgBr}. The unit element is viewed as a morphism $\eta:\underline 1\to B$ from the unit object which in our case will just be $k$. The product, counit, coproduct and antipode are morphisms and we underline the latter two for clarity. In our concrete setting we write $\underline\Delta b=b\underline{\o}\otimes b_{\underline{\t}}$ (summation understood) and recall that   $\underline\Delta$ is an algebra hom to the braided tensor product algebra, so that $\underline{\Delta} \cdot=(\cdot \otimes \cdot)(\mathrm{id}\otimes \Psi \otimes \mathrm{id})(\underline{\Delta}\otimes \underline{\Delta})$ with $\Psi$ the braiding on $B\otimes B$. We have \cite{Braid,AlgBr},
\begin{equation}\label{braidedS}
\underline{S} \circ \cdot = \cdot \circ \Psi \circ (\underline{S}\otimes \underline{S}),\quad 
\underline{\Delta}\circ \underline{S}=(\underline{S}\otimes \underline{S}) \circ \Psi \circ \underline{\Delta}. \end{equation}

\begin{lemma}\cite{Foundation,Primer,Bos} \label{boson}
Let $H$ be quasitriangular and $B\in\CM_H$ a braided group. Then $H\rcross B$ by the given action and $H\rcocross B$ by the induced coaction form a Hopf algebra $H \rbiprod B$ (the bosonisation of $B$). 
\end{lemma}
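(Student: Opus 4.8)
The plan is to verify that $H\rcross B$ and $H\rcocross B$ are compatible, i.e. that the coproduct and counit of the cross coproduct coalgebra are algebra maps with respect to the cross product algebra, and that the candidate antipode works. First I would record the explicit structures on the common underlying vector space $H\tens B$: the product $(h\tens b)(g\tens c)=hg\o\tens(b\ra g\t)c$ from the right cross product (using that $B$ is a right $H$-module algebra, which holds since $B\in\CM_H$ is an algebra in the category), and the coproduct $\Delta(h\tens b)=h\o\tens b\o{}^{\bar\z}\tens h\t b\o{}^{\bar\o}\tens b\t$ from the right cross coproduct. The key point is that the coaction of $H$ on $B$ used here is \emph{not} given a priori but is the one induced from the right-module structure via the functor $\CM_H\hookrightarrow\CM{}_H^H$, namely $\Delta_R b=b\ra\CR^{\o}\tens\CR^{\t}$ (so that $B$ becomes a Yetter--Drinfeld module and in particular a right $H$-comodule coalgebra), and with this induced coaction the braiding $\Psi$ on $B\tens B$ appearing in $\underline\Delta$ coincides with $\Psi_R$. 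So the very first step is to unwind this induced coaction and check $B\in\CM^H$ is a comodule coalgebra, i.e.\ that \eqref{right-comod-coalg} holds; this is where the quasitriangularity axioms $(\Delta\tens\id)\CR=\CR_{13}\CR_{23}$, $(\id\tens\Delta)\CR=\CR_{13}\CR_{12}$ and the intertwining relation $\mathrm{flip}\circ\Delta=\CR(\Delta\cdot)\CR^{-1}$ enter.

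Next I would check that $\Delta:H\rbiprod B\to(H\rbiprod B)\tens(H\rbiprod B)$ is multiplicative. The cleanest way is to compute $\Delta((h\tens b)(g\tens c))$ and $\Delta(h\tens b)\Delta(g\tens c)$ separately and match them. The left-hand side expands the product first, then applies the cross coproduct; the right-hand side multiplies two cross coproducts componentwise in the braided-tensor-square algebra. The equality comes down to two facts: (i) $\underline\Delta$ on $B$ is an algebra map to the braided tensor product $B\,\underline\tens\,B$, which is exactly the braided-group axiom $\underline\Delta\cdot=(\cdot\tens\cdot)(\id\tens\Psi\tens\id)(\underline\Delta\tens\underline\Delta)$ and involves $\Psi=\Psi_R$; and (ii) the module and comodule structures on $B$ are compatible in the Yetter--Drinfeld sense, so that pushing the $H$-part of $(h\tens b)$ past the $B$-part of $(g\tens c)$ and re-collecting produces precisely the cross terms demanded by the right-hand side. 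Counitality ($\eps\tens\eps$ is an algebra map) is a short separate check using $(\eps\tens\id)\Delta_R=\eps\tens 1$ and $b\ra 1=b$, together with $\eps(\CR^{\o})\CR^{\t}=1$. Then I would exhibit the antipode: on bosonisations it is the standard formula $S(h\tens b)=(1\tens\underline S(\CR^{\t}\ra b))(S(h\CR^{\o})\tens 1)$ (or the mirror ordering), built from the braided antipode $\underline S$ of $B$, the antipode $S$ of $H$, and the quasitriangular structure used to convert the comodule ``tails'' back into the module language; one verifies the antipode axioms by a direct but mechanical computation using \eqref{braidedS}.

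The main obstacle I expect is purely bookkeeping: correctly identifying \emph{which} coaction and \emph{which} braiding appear where, and threading $\CR$ consistently through every step. In particular the single most error-prone point is step one—proving that the induced coaction makes $B$ a right $H$-comodule coalgebra and that its associated braiding is the same $\Psi_R$ used implicitly in the braided coproduct of $B$—because everything downstream (multiplicativity of $\Delta$, the antipode identity) is then a matter of substituting and collapsing $\CR\CR^{-1}$ pairs. A conceptually cleaner alternative, which I would mention, is to observe that the functor $\CM_H\hookrightarrow\CM{}_H^H$ is braided monoidal, so $B$ carries a Yetter--Drinfeld Hopf algebra structure, and then invoke the right-handed analogue of Radford--Majid bosonisation (Hopf algebras in $\CM{}_H^H$ correspond to ordinary Hopf algebras admitting $H$ as a sub and quotient Hopf algebra via a projection); the content of the lemma is then just that the cross product/cross coproduct formulas above are the concrete image of that correspondence. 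Either way, the quasitriangularity of $H$ is used only to turn comodule data into module data and is not needed for the abstract statement—hence the result is genuinely a specialisation of bosonisation over $\CM{}_H^H$, and I would present the proof at the level of ``match the two sides, all $\CR$-strings cancel,'' leaving the reader to check the displayed identities.
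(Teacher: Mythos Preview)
The paper does not actually prove this lemma: it is stated as a citation of \cite{Foundation,Primer,Bos} with no proof environment, and the text following it merely records the explicit coproduct formula $\Delta(h\otimes b)=h\o\otimes b_{\underline\o}\lhd\CR^{\o}\otimes h\t\CR^{\t}\otimes b_{\underline\t}$. Your outline is essentially the standard argument one finds in those references (induced coaction via $\CR$ lands $B$ in $\CM_H^H$, then Radford--Majid/bosonisation applies, or equivalently one checks multiplicativity of $\Delta$ and the antipode by hand), so there is nothing to compare against and your approach is correct.

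Two small points of care: first, in your cross-coproduct formula you wrote $b\o,b\t$ where the braided coproduct $b_{\underline\o},b_{\underline\t}$ is meant (the ordinary Sweedler notation is reserved for $H$); second, your antipode formula uses left-action notation $\CR^{\t}\ra b$ whereas $B$ lives in right $H$-modules, so this should read $b\ra\CR^{\t}$ (and the precise placement of $\CR^{\o},\CR^{\t}$ and the ordering of the two factors needs to be fixed once, consistently with the induced coaction $\Delta_R b=b\ra\CR^{\o}\otimes\CR^{\t}$). These are notational slips rather than gaps in the argument.
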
 
The coproduct here is $\Delta(h\otimes b)=h{\o}\otimes b{\underline{\o}}\lhd \mathcal{R}^{\o}\otimes h{\t}\mathcal{R}^{\t}\otimes b{\underline{\t}}$. 
Similarly for $B\in {}_H\CM$ to give $B\lbiprod H$. If $A$ is coquasitriangular and  $B\in\CM^A$ then the {\em cobosonisation} is the ordinary Hopf algebra $A\rbiprod B$ with $A\rcocross B$ by the given coaction and $A\rcross B$ by the induced action from the above functor. Explicitly, the cross product is $(a \otimes b)(d \otimes c)=ad{\underline{\o}}\otimes b^{\bar{\z}}c\,  \mathcal{R}(b^{\bar{\o}},d{\underline{\t}})$. Both constructions are examples of a more general Radford-Majid biproduct theorem \cite{Rad:str, Ma:skl} (the latter gave the categorical picture) whereby for any Hopf algebra $H$ with invertible antipode, Hopf algebras with split projections to $H$ are of the form $B\lbiprod H$ for some $B\in {}_H^H\CM$.  

Finally, the  notion of a dually paired or categorical dual braided group $B^\star$ (when $B$ is a rigid object, e.g. finite-dimensional in our applications) in \cite{Braid,AlgBr} needs a little care to define the pairing $B^\star\tens B^\star\tens B\tens B$ by pairing $B^\star\tens B$ in the middle first. Pairing maps go to the trivial object. In our context, where objects are built on vector spaces, it is useful to match ordinary Hopf algebra conventions by defining $B^*$ with the adjoint algebra and coalgebra structures in the usual way rather than the above categorical way, which, however, canonically lands $B^*$ in a different category from $B$.  

\begin{lemma}\label{lemma dual boson}
Let $H$ be finite dimensional and quasitriangular with dual $A$, and $B$ be a finite-dimensional braided group in $\mathcal{M}_{H}$. Then $B^*\in \CM^A$ and $(H\rbiprod B)^{*}=A\rbiprod B^{*}$. Similarly, if $B\in {}_{H}\mathcal{M}$ then $B^*\in {}^A\CM$ and $(B\lbiprod H)^{*}=B^{*}\lbiprod A$.
\end{lemma}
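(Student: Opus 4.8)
The plan is to establish both claims as applications of duality between bosonisation constructions, using the explicit formulas recalled above. I will give the argument for $(H\rbiprod B)^*=A\rbiprod B^*$ in detail and then note that the second claim follows by a symmetric argument (swapping left/right throughout). First I would set up the dual pairing. Since $B$ is finite-dimensional, $B^*$ is a vector space with the transpose structures: the product on $B^*$ is dual to $\underline\Delta$ on $B$, the coproduct dual to the product on $B$, and similarly for unit, counit and (braided) antipode. The first point to check is that $B^*\in\CM^A$, i.e. that $B^*$ is a right $A$-comodule: this is exactly the transpose of the left $H$-action on $B$, with the coaction $\Delta_R\phi=\phi^{\bar\z}\tens\phi^{\bar\o}$ characterised by $\langle\phi^{\bar\z},b\rangle\langle\phi^{\bar\o},h\rangle_{A,H}=\langle\phi,h\la b\rangle$ for all $b\in B$, $h\in H$. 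Wait --- I need $B\in\CM_H$ (right $H$-module), so the coaction on $B^*$ is the transpose of the right action, $\langle\phi^{\bar\z},b\rangle\langle\phi^{\bar\o},h\rangle=\langle\phi,b\la h\rangle$ using $\langle\ ,\ \rangle:A\tens H\to k$; then one checks $B^*$ is a right $A$-comodule coalgebra and comodule algebra, so that it is a braided group in $\CM^A$, where the braiding $\Psi^R$ in $\CM^A$ is precisely the transpose inverse of the braiding $\Psi_R$ in $\CM_H$ (this uses $\bar\CR(a,b)=\CR^{-1}_{21}$ matching $\CR^{-1}$ on the module side appearing in $\Psi_R$; the self-duality of the braiding under transpose-inverse is the conceptual heart of why $B^*$ lands in the comodule category rather than back in $\CM_H$).

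Next I would define the candidate pairing $\langle\ ,\ \rangle: (A\rbiprod B^*)\tens(H\rbiprod B)\to k$ on the underlying tensor products by
\[ \langle a\tens\phi,\ h\tens b\rangle=\langle a,h\rangle_{A,H}\,\langle\phi,b\rangle_{B^*,B},\]
which is non-degenerate since each factor pairing is. The substance of the proof is to verify that this is a Hopf algebra pairing, i.e. that the product of $A\rbiprod B^*$ is adjoint to the coproduct of $H\rbiprod B$ and vice versa (compatibility of units/counits and antipodes then either follows formally or is an easy check). I would do this in two symmetric halves. For the first half, take the cross product formula $(a\tens\phi)(d\tens c)=ad\underline\o\tens\phi^{\bar\z}c\,\CR(\phi^{\bar\o},d\underline\t)$ for $A\rbiprod B^*$ and pair it against the coproduct $\Delta(h\tens b)=h\o\tens b\underline\o\la\CR^\o\tens h\t\CR^\t\tens b\underline\t$ of $H\rbiprod B$; expanding both sides using the defining properties of the duality pairing $\langle\ ,\ \rangle_{A,H}$, the duality of $\underline\Delta_{B^*}$ with the product of $B$, and the duality of the $A$-coaction on $B^*$ with the $H$-action on $B$, both sides should collapse to the same expression. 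For the second half I pair the coproduct of $A\rbiprod B^*$ (which is the cobosonisation coproduct, built from $A\rcocross B^*$ using the given coaction) against the cross product $(h\tens b)(g\tens c)=hg\o\tens(b\la g\t)c$ of $H\rbiprod B$, and again match term by term.

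The main obstacle, and where I would spend the most care, is bookkeeping: tracking the induced structures correctly. In $H\rbiprod B$ the coaction on $B$ is \emph{induced} from the quasitriangular structure of $H$ (via ${}_H\CM\hookrightarrow\CM{}_H^H\CM$ or the right-handed version), while on the dual side the action of $A$ on $B^*$ in $A\rbiprod B^*$ is \emph{induced} from the given $A$-coaction via the functor ${}^A\CM\hookrightarrow\CM{}_A^A$; I must check that these two induced structures are mutually adjoint under the pairing, which reduces to the statement that $\langle a,\CR_H^\o\rangle\langle a',\CR_H^\t\rangle=\CR_A(a,a')$ appropriately, i.e. that $\CR_H\in H\tens H$ is the element dual to $\CR_A:A\tens A\to k$ --- true because $A=H^*$ and the two are mutually dual in the standard way. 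A secondary subtlety is orientation: making sure that the $\underline{\mathrm{op}}$/$\underline{\mathrm{cop}}$ decorations and the left/right placements line up, so that the \emph{second} statement genuinely comes from the first by applying it with left and right systematically interchanged (equivalently, replacing $H$ by $H^{\mathrm{op}}$ or using $\bar H$), rather than needing an independent computation. Once the dictionary is fixed, each verification is a finite diagram chase with no real surprises, so I would present one half in full and indicate that the other three are strictly analogous.
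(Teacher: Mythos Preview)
The paper does not actually give a proof of this lemma: it is stated in the preliminaries (Section~2.2) and then the text moves immediately on to double bosonisation, so there is nothing to compare against. The result is treated as a standard piece of Hopf algebra duality (bosonisation dual to cobosonisation), in the same spirit as Lemma~2.1 which is cited to \cite{Foundation,Primer,Bos} without argument.

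Your outline is the natural one and is essentially correct. A couple of points to tighten. First, your aside about the braiding (``$\Psi^R$ is the transpose inverse of $\Psi_R$'') is not quite needed for the statement and is also where the conventions are most delicate: the paper itself warns just before this lemma that taking $B^*$ with the ordinary (non-categorical) adjoint structures ``canonically lands $B^*$ in a different category from $B$'', which is exactly the passage $\CM_H\rightsquigarrow\CM^A$ you are using. So rather than asserting a general relationship between the braidings, it is cleaner to verify directly (once) that the transposed coaction makes $B^*$ a comodule algebra and comodule coalgebra in $\CM^A$; the braided-Hopf axiom for $\underline\Delta_{B^*}$ then follows by dualising the braided-homomorphism property of $\underline\Delta_B$. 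Second, your reduction of the ``induced structures are mutually adjoint'' step to the identity $\langle a,\CR_H^{(1)}\rangle\langle a',\CR_H^{(2)}\rangle=\CR_A(a,a')$ is the right idea, but be careful with which side the $\CR$ acts on and with $\CR$ versus $\CR_{21}$: in $H\rbiprod B$ the induced coaction is $b\mapsto b\la\CR^{(1)}\tens\CR^{(2)}$, and its adjoint is the induced action $\phi\la a=\phi^{\bar\z}\CR_A(\phi^{\bar\o},a)$ used in the cobosonisation product. Getting that dictionary pinned down once is really the only content; after that, as you say, both halves of the Hopf-pairing verification are mechanical.
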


 \subsection{Double bosonisation}
 
 Another basic fact about braided groups is that if $B\in \CC$ with invertible antipode then $B^{\mathrm{\underline{op}}}$ with the same coalgebra structure as $B$ but with braided-opposite product and antipode given by
 \begin{align}
 \cdot_{\mathrm{\underline{op}}} = \cdot \circ \Psi^{-1}_{B,B},\quad \bar{S}=\underline{S}^{-1}
 \end{align}
 is a braided group in $\bar \CC$, by which we mean $\CC$ with the reversed (inverse) braid crossing \cite{Braid}. The same remarks apply for $B^{\mathrm{\underline{cop}}}\in\bar \CC$ with $\delcop = \Psi^{-1}_{B,B} \circ \underline{\Delta}$ and inverted $\underline S$. 
 
 If $H$ is quasitriangular and $\CC=\CM_H$ then $\bar\CC=\CM_{\bar H}$. Let $B$ be a braided group in $\mathcal{M}_{H}$. By the theory of bosonisation, we have two Hopf algebras $H \rbiprod B$ and $B^{*\mathrm{\underline{cop}}}\lbiprod \bar{H}$. We can glue them together to get the following theorem.
 \begin{theorem}c.f. \cite[Theorem 3.2]{db}\label{Double Bosonisation}
 	Let $H$ be a quasitriangular Hopf algebra. Let $B$ be a  finite-dimensional braided group in $\mathcal{M}_{H}$. There is an ordinary Hopf algebra $B^{*\mathrm{\underline{cop}}}\lbiprod H \rbiprod B$, the double bosonisation, built on $B^{*\mathrm{\underline{cop}}}\otimes H \otimes B$ and containing $B^{*\mathrm{\underline{cop}}}\lbiprod \bar{H}$ and $H \rbiprod B$ as sub-Hopf algebras with cross relation
 	\begin{equation*}
 	\begin{aligned}
 	bc=&(\mathcal{R}_{1}^{\t}\rhd c{\overline{\t}})\mathcal{R}_{2}^{\t}\mathcal{R}_{1}^{\mo}(b{\underline{\t}}\lhd \mathcal{R}_{2}^{\mo})\langle \mathcal{R}_{1}^{\o} \rhd c{\overline{\o}}, b{\underline{\o}} \lhd \mathcal{R}_{2}^{\o} \rangle \langle \mathcal{R}_{1}^{\mt} \rhd \bar{S}c{\overline{\th}}, b{\underline{\th}} \lhd \mathcal{R}_{2}^{\mt} \rangle. 
 	\end{aligned}
 	\end{equation*}
 	Furthermore, $B^{*\mathrm{\underline{cop}}}\lbiprod H \rbiprod B$ has a quasitriangular structure  $\CR^{\rm new}=\overline{\mathrm{exp}}\cdot\mathcal{R}$, 
 	where $\overline{\mathrm{exp}}=\sum f^{a}\otimes \underline{S}e_{a}$, $\{e_{a}\}$ is a basis of $B$ and $\{f^{a}\}$ is a dual basis of $B^{*}$.
 \end{theorem}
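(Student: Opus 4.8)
The plan is to realise $D_H(B)=B^{*\cop}\lbiprod H\rbiprod B$ as an ordinary Radford--Majid biproduct and then read its structure off. Put $K:=H\rbiprod B$, which by Lemma~\ref{boson} is an ordinary Hopf algebra and visibly contains $H$ and $H\rbiprod B$ itself as sub-Hopf algebras. First I would make $B^{*\cop}$ into a braided group in the braided category ${}_K^K\CM$ of crossed (Yetter--Drinfeld) $K$-modules, and then invoke the Radford--Majid biproduct theorem recalled after Lemma~\ref{boson} to get an ordinary Hopf algebra $B^{*\cop}\lbiprod K$, built on $B^{*\cop}\tens K=B^{*\cop}\tens H\tens B$; this is to be $D_H(B)$, so that the whole object appears as a \emph{second} bosonisation, now of $B^{*\cop}$ by the ordinary Hopf algebra $K$. (Symmetrically one could instead view $B$ as a braided group in crossed modules over $B^{*\cop}\lbiprod\bar H$.)

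The needed $K$-action on $B^{*\cop}$ is to be assembled from two pieces: the $H$-factor of $K$ acts by the module structure dual to $B\in\CM_H$, namely $(h\la\phi)(b)=\phi(b\ra h)$, and the $B$-factor of $K$ acts by the left braided coregular action of $B$ on $B^*$ built from the braided evaluation pairing $B^*$ with $B$, contracting legs of $\underline\Delta\phi$ against $b$. The $K$-coaction on $B^{*\cop}$ is likewise to be assembled from the coaction on $B^{*\cop}$ induced from the quasitriangular structure of $H$ (the functor recalled in Section~\ref{Sec2}) together with a coaction built from the braided coevaluation element $\sum_a e_a\tens f^a\in B\tens B^*$, for which finite-dimensionality of $B$ is used. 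I would then verify that these assemble into one crossed $K$-module structure --- the compatibility being in essence the statement that $B^*$ is dually paired to $B$ as braided groups, together with the hexagon and quasi-cocommutativity axioms of $H$ tying the $H$- and $B$-pieces together --- and that the product, unit, coproduct, counit and antipode of $B^{*\cop}$ are all morphisms in ${}_K^K\CM$. The braided-opposite coproduct, and the passage to $\bar H$ below, are forced at exactly this point: the braiding that $B^{*\cop}$ sees is the one of $\bar\CC$, not of $\CC={}_H\CM$, so it is $B^{*\cop}$ rather than $B^*$ that is a braided group here.

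Granted this, the biproduct theorem at once produces the ordinary Hopf algebra $B^{*\cop}\lbiprod K$ on $B^{*\cop}\tens H\tens B$; I would then unwind its structure maps. Its product restricted to $1\tens K$ is the cross product $H\rcross B$, and restricted to $B^{*\cop}\tens H$ it is the cross product $B^{*\cop}\lcross\bar H$ underlying $B^{*\cop}\lbiprod\bar H$ (the $\bar H$ reappearing for the reason noted above), so both bosonisations sit inside as sub-Hopf algebras. The only genuinely new relation is the one moving a generator of $B$ leftwards past a generator of $B^{*\cop}$; substituting the explicit action and coaction of the previous paragraph into the biproduct cross relation should reproduce the displayed formula $bc=\dots$ term by term --- the two evaluation brackets coming from the legs used in the $B$-action contracted against the coaction via $\sum_a e_a\tens f^a$, and the two copies $\CR_1,\CR_2$ of $\CR$ entering through the coproduct of $K$ and through the braiding implicit in the $B$-action on $B^{*\cop}$. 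The biproduct coproduct likewise restricts on the two sub-bosonisations to their already-known coproducts.

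Finally, $K$ is not itself quasitriangular, so nothing is inherited and I would verify $\CR^{\rm new}$ directly. One checks that $\CR^{\rm new}=\overline{\mathrm{exp}}\cdot\CR$, with $\overline{\mathrm{exp}}=\sum_a f^a\tens\underline Se_a\in B^{*\cop}\tens B$ and $\CR\in H\tens H$ both read inside $D_H(B)\tens D_H(B)$, is convolution-invertible (inverse built from $\sum_a f^a\tens e_a$ and $\CR^{-1}$) and satisfies the three axioms: the two hexagon identities $(\Delta\tens\id)\CR^{\rm new}=\CR^{\rm new}_{13}\CR^{\rm new}_{23}$ and $(\id\tens\Delta)\CR^{\rm new}=\CR^{\rm new}_{13}\CR^{\rm new}_{12}$ reduce --- using that $\CR$ already obeys them for $H$, and that $\overline{\mathrm{exp}}$ is the canonical element of the $B^*$--$B$ pairing --- to the braided antipode identities \eqref{braidedS} plus the coevaluation being a morphism, while quasi-cocommutativity $h\t\tens h\o=\CR^{\rm new}(h\o\tens h\t)(\CR^{\rm new})^{-1}$ is checked separately on the three kinds of generator: on $H$ it is the old axiom, and on $B$ and on $B^{*\cop}$ it is equivalent to the cross relation just obtained. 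The main obstacle throughout is the middle step --- confirming that the combined $H$- and $B$-(co)actions genuinely define a crossed $K$-module and that every structure map of $B^{*\cop}$ respects it --- since this is where the quasitriangular structure of $H$, the finite-dimensionality of $B$, the identities \eqref{braidedS}, and the bookkeeping of the three-fold coproducts appearing in the cross relation all have to conspire; once that is secured, both the cross relation and $\CR^{\rm new}$ come out essentially forced.
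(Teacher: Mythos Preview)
The paper does not prove this theorem; it is stated as a recalled result from \cite{db}. Your proposal to realise the double bosonisation as an iterated Radford--Majid biproduct $B^{*\cop}\lbiprod(H\rbiprod B)$ is a legitimate alternative to the direct verification in \cite{db}, and this viewpoint is indeed touched on in \cite{db,db2}. What it buys is conceptual economy: once $B^{*\cop}$ is established as a braided Hopf algebra in ${}_K^K\CM$ with $K=H\rbiprod B$, the Hopf algebra axioms for the whole are automatic from the biproduct theorem, and only the cross relation and $\CR^{\rm new}$ need separate attention. By contrast, the original \cite{db} --- and the present paper's treatment of the dual co-double bosonisation in Section~\ref{Sec3} --- writes all structure maps explicitly on the triple tensor product and checks associativity, coassociativity, the bialgebra axiom, the antipode, and the quasitriangular axioms one by one.

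One caution: your description of the $K$-coaction on $B^{*\cop}$ is a bit loose. Since $K$ is not quasitriangular, there is no functor ${}_K\CM\hookrightarrow{}_K^K\CM$ to appeal to, so the coaction must be built by hand, and its $B$-component is not simply the coevaluation element but the braided left coadjoint coaction of $B$ on $B^*$ (which uses $\underline\Delta$, $\underline S$, and the braiding, not just $\sum_a e_a\tens f^a$). Getting this right, and then checking the Yetter--Drinfeld compatibility between the full $K$-action and $K$-coaction, is exactly where the quasitriangularity of $H$ and the identities \eqref{braidedS} enter, and is no shorter than the direct route --- as you yourself acknowledge. So the strategy is sound, but the saving over \cite{db} is organisational rather than computational.
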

 
 In fact, $B$ in \cite{db} is not required to be  finite dimensional but we have restricted to the finite-dimensional case for simplicity. Our goal is a dual version of this theorem with $A$ coquasitriangular and $B\in {}^A\CM$, in which case the category with reversed braiding is ${}^{\bar A}\CM$ and
 \begin{align}
 a \dotop b =& \mathcal{R}(Sa^{\bar{\o}},b^{\bar{\o}}) b^{\bar{\infi}}a^{\bar{\infi}}
 \end{align}
 for all $a,b \in B^{\mathrm{\underline{op}}}$. As in Lemma~\ref{lemma dual boson}, we think of ${}^A\CM$ as $\CM_H$ in the finite-dimensional Hopf algebra case by evaluating against a coaction of $A$ to get an action of $H$.
 
 \begin{lemma} \label{lemma about B op braid} If $H$ is finite dimensional and quasitriangular with dual $A$ and $B\in {}^A\CM$ is finite dimensional then 
 	$(B^{\underline{\mathrm{op}}})^{*}=B^{*\mathrm{\underline{cop}}}\in {}_{\bar{H}}\mathcal{M}$. 
 	\begin{proof}  Here $B^{\underline{\mathrm{op}}}\in {}^{\bar A}\CM$ or $\CM_{\bar H}$ and $(B^{\underline{\mathrm{op}}})^*\in {}_{\bar H}\CM$ where $B^{*\underline{\mathrm{cop}}}$ lives. It is clear that the coproduct of  $B^{\underline{\mathrm{op}}}$ corresponds to the product of $B^{*\underline{\mathrm{cop}}}$. For the other half, 
 		\begin{align*}
 		\langle x, b\dotop c \rangle&=\langle x, c^{\bar{\infi}}b^{\bar{\infi}} \rangle \mathcal{R}(Sb^{\bar{\o}},c^{\bar{\o}})=\langle x{\underline{\o}}, c^{\bar{\infi}} \rangle \langle x{\underline{\t}}, b^{\bar{\infi}} \rangle \langle b^{\bar{\o}}, \mathcal{R}^{\mt} \rangle \langle c^{\bar{\o}}, \mathcal{R}^{\mo} \rangle\\
 		&=\langle x{\underline{\o}}, c \lhd \mathcal{R}^{\mo} \rangle \langle x_{\underline{\t}}, b \lhd \mathcal{R}^{\mt} \rangle=\langle \mathcal{R}^{\mt} \rhd x_{\underline{\t}} \otimes \mathcal{R}^{\mo} \rhd x_{\underline{\o}},b\otimes c \rangle
 		\end{align*}
 which is $\langle \delcop x, b\otimes c \rangle$ as required. 
 	\end{proof}
 \end{lemma}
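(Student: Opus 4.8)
The plan is to identify $(B^{\underline{\mathrm{op}}})^{*}$ with $B^{*\mathrm{\underline{cop}}}$ as braided groups by matching their structure maps, after first pinning down the categorical bookkeeping. As in Lemma~\ref{lemma dual boson} I would regard a finite-dimensional left $A$-comodule as a right $H$-module by evaluating the coaction against $H$; thus $B\in{}^A\CM$ sits in $\CM_H$, its braided-opposite $B^{\underline{\mathrm{op}}}$ sits in ${}^{\bar A}\CM\cong\CM_{\bar H}$, and dualisation, which swaps left for right, puts $(B^{\underline{\mathrm{op}}})^{*}$ in ${}_{\bar H}\CM$, which is exactly where $B^{*\mathrm{\underline{cop}}}$ lives. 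Dualisation also transposes products with coproducts, so the identification reduces to two checks: the product of $(B^{\underline{\mathrm{op}}})^{*}$, adjoint to the coproduct of $B^{\underline{\mathrm{op}}}$ (which is simply $\underline\Delta$ of $B$), should equal the product of $B^{*\mathrm{\underline{cop}}}$, which is the product of $B^{*}$; and the coproduct of $(B^{\underline{\mathrm{op}}})^{*}$, adjoint to $\dotop$, should equal $\delcop$. The first is immediate because the decorations $\op$ and $\cop$ do not alter the coalgebra of $B$ nor the algebra of $B^{*}$; and once product and coproduct are matched the counit and unit match at once and the antipode is forced, being uniquely determined and compatible with~(\ref{braidedS}).

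The substance is the second check, a pairing computation I would run as follows. Take $x\in B^{*}$ and expand $\langle x,b\dotop c\rangle$ using the formula $b\dotop c=\CR(Sb^{\bar{\o}},c^{\bar{\o}})\,c^{\bar{\infi}}b^{\bar{\infi}}$ recorded just before the lemma. Pairing $x$ against the product $c^{\bar{\infi}}b^{\bar{\infi}}$ in $B$ splits $x$ by $\underline\Delta$ into $x\underline{\o}\tens x\underline{\t}$; rewriting the coquasitriangular factor through $\CR(Sa,b)=\bar\CR(b,a)$ and then evaluating the comodule legs $b^{\bar{\o}},c^{\bar{\o}}$ against the two copies of $\CR^{\mo}\tens\CR^{\mt}\in H\tens H$ converts the comodule structure of $B$ into its right $H$-action, producing (up to the precise placement of legs) $\langle x\underline{\o},\,c\lhd\CR^{\mo}\rangle\langle x\underline{\t},\,b\lhd\CR^{\mt}\rangle$. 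Moving the right $H$-action on $B$ across the pairing to the induced left $H$-action on $B^{*}$ turns this into $\langle\CR^{\mt}\rhd x\underline{\t}\tens\CR^{\mo}\rhd x\underline{\o},\,b\tens c\rangle$, in which I would recognise a copy of the inverse braiding $\Psi^{-1}_{B,B}$ (the left-module braiding being $\Psi_{L}(v\tens w)=\CR^{\t}\rhd w\tens\CR^{\o}\rhd v$) applied to $\underline\Delta x$, that is $\langle\delcop x,\,b\tens c\rangle$, which is what is wanted.

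The step I expect to be the main obstacle is bookkeeping rather than ideas: keeping track that it is $\bar A$ and $\bar H$ that are in play here (so $\CR\mapsto\CR^{-1}_{21}$ and the braiding is the reversed one), getting every left-versus-right and every $S$ versus $S^{-1}$ right in the comodule$\leftrightarrow$module dictionary, and above all making sure that the $\CR^{-1}$ extracted from unwinding $\CR(Sb^{\bar{\o}},c^{\bar{\o}})$ is the very same $\CR^{-1}$, with legs in the correct slots, that occurs inside $\Psi^{-1}_{B,B}$ in the definition of $\delcop$. A reassuring check is that the outcome is essentially forced: the categorical dual of a braided group is again a braided group in ${}_{\bar H}\CM$, its coalgebra has already been matched to that of $B^{*\mathrm{\underline{cop}}}$, and so the only thing that can go wrong is the single algebra identity computed above.
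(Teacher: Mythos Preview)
Your proposal is correct and follows essentially the same route as the paper: the same categorical placement of $(B^{\underline{\mathrm{op}}})^{*}$ and $B^{*\underline{\mathrm{cop}}}$ in ${}_{\bar H}\CM$, the same observation that the coalgebra/algebra half is immediate, and the same pairing computation $\langle x,b\dotop c\rangle\to\langle\delcop x,b\otimes c\rangle$ via evaluating the $A$-coaction against $\CR^{-1}\in H\otimes H$ and then shifting the resulting $H$-action across the pairing. Your added remarks about the antipode being forced and about the bookkeeping pitfalls are sound but not needed for the argument.
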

\section{Co-double Bosonisation} \label{Sec3}

The dual version of Theorem~\ref{Double Bosonisation} can in principle now be deduced using the lemmas in the preceding section, at least when $A$ is finite dimensional. However, we do not want to be limited to this case and give  a direct proof of the resulting formulae. 
 
 \begin{theorem}[Co-double bosonisation]\label{codbos}
Let $B$ be a finite-dimensional braided group in ${}^{A}\mathcal{M}$ with basis $\{e_{a}\}$. Denote its dual by $B^{*} \in \mathcal{M}^{A}$ with dual basis $\{f^{a}\}$. Then there is an ordinary Hopf algebra $B^{\mathrm{\underline{op}}}\lbiprod A \rbiprod B^{*}$, the {\em co-double bosonisation}, built on the vector space $B^{\mathrm{\underline{op}}} \otimes A \otimes B^{*}$  with \begin{align*}
(x\otimes & k \otimes y)(w \otimes \ell \otimes z)= x\cdot_{\mathrm{\underline{op}}}w^{\bar{\infi}} \otimes k\t \ell\o \otimes y^{\bar{\z}}z \ \mathcal{R}(y^{\bar{\o}}, \ell\t)\mathcal{R}(Sk\o, w^{\bar{\o}}),\\
\Delta&(x\otimes k \otimes y)\\
=& \sum\limits_{a}  x\underline{\o}\otimes x\underline{\t}^{\bar{\o}}\o k\o \otimes f^{a}\otimes e_{a}\underline{\o}^{\bar{\infi}} \dotop x{\underline{\t}}^{\bar{\infi}} \dotop \bar{S}e_{a}\underline{\th}^{\bar{\infi}} \otimes k{\fo}y{\underline{\o}}^{\bar{\o}}{\t}\otimes y{\underline{\t}}\\
&\mathcal{R}(e_{a}\underline{\o}^{\bar{\o}}, x{\underline{\t}}^{\bar{\o}}{\t}k{\t})\mathcal{R}(S(k{\th}y{\underline{\o}}^{\bar{\o}}{\o}),e_{a}\underline{\th}^{\bar{\o}}) \ \langle y{\underline{\o}}^{\bar{\z}},e_{a}{\underline{\t}} \rangle
\end{align*}
for all $x,w \in B^{\mathrm{\underline{op}}}$, $k,\ell \in A$, and $y,z \in B^{*}$.
\end{theorem}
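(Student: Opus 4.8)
The plan is to obtain the co-double bosonisation $B^{\underline{\rm op}}\lbiprod A \rbiprod B^*$ as the object dual to the double bosonisation $B^{*\underline{\rm cop}}\lbiprod H \rbiprod B$ of Theorem~\ref{Double Bosonisation}, but written in a self-contained way that makes sense for $A$ coquasitriangular and only $B$ finite dimensional. First I would reduce to the finite-dimensional case conceptually in order to discover the formulae: given $H$ finite dimensional with dual $A$, Lemma~\ref{lemma dual boson} and Lemma~\ref{lemma about B op braid} tell us that $B^*\in\CM^A$, $(B^{\underline{\rm op}})^* = B^{*\underline{\rm cop}}$, and that $(B\lbiprod H)^* = B^*\lbiprod A$, $(H\rbiprod B)^* = A\rbiprod B^*$ (after suitably handling $\bar H$ versus $\bar A$). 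Dualising $B^{*\underline{\rm cop}}\lbiprod H \rbiprod B$ as a vector space gives $B^{\underline{\rm op}}\otimes A \otimes B^*$, and one reads off the product of the co-double bosonisation as the adjoint of the double-bosonisation coproduct, and the coproduct as the adjoint of the double-bosonisation product; the cross relation in Theorem~\ref{Double Bosonisation} dualises to the claimed formulae. The hyperbolic part of the work here is purely bookkeeping: carefully pairing each of the three tensor factors with the corresponding factor on the other side, and tracking where $S$, $\bar S$, $\CR^{\mo}$ and the flips enter.

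Second, and this is the substance of the theorem, I would verify the stated formulae directly as a Hopf algebra structure on $B^{\underline{\rm op}}\otimes A\otimes B^*$ without assuming $A$ or $H$ finite dimensional, so the argument is valid for any coquasitriangular $A$. I would proceed in the following order. (i) Check that the product is associative and unital — since it is built as a ``double cross product''-type glueing of the cobosonisation $A\rbiprod B^*$ (with $A\rcocross B^*$ by the given coaction and $A\rcross B^*$ by the induced action) and of $B^{\underline{\rm op}}\lbiprod \bar A$, associativity follows from the module-algebra and coaction compatibility axioms plus the coquasitriangularity identities \eqref{quabic}--\eqref{quacom}. (ii) Check that the coproduct is coassociative and counital: the coproduct is a ``double cross coproduct'' so this is dual in spirit, using the left-comodule-coalgebra and right-comodule-coalgebra axioms \eqref{left-comod-coalg}, \eqref{right-comod-coalg} for $B^{\underline{\rm op}}$ and $B^*$ together with the braided coproduct axioms. (iii) Check that $\Delta$ is an algebra map, i.e. the bialgebra axiom; this is where the two structures interact and where the cross relation / the coquasitriangularity of $A$ really gets used. (iv) Construct the antipode explicitly from the braided antipodes $\bar S$ on $B^{\underline{\rm op}}$ and $\underline S$ on $B^*$ together with $S$ on $A$, and verify the antipode axioms; alternatively deduce its existence from the Radford--Majid biproduct-style argument once a split projection is identified.

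The main obstacle I expect is verifying that $\Delta$ is multiplicative, i.e. step (iii). The coproduct involves a sum over a basis $\{e_a\}$ of $B$ and its dual $\{f^a\}$ of $B^*$ (coming from the ``$\overline{\exp}$'' canonical element in the double bosonisation), several layers of coactions $\Delta_L$ and $\Delta_R$ applied to $x\underline{\t}$, $y\underline{\o}$ etc., nested uses of $\cdot_{\underline{\rm op}}$ on $B^{\underline{\rm op}}$, and $\CR$-factors evaluated on products of Sweedler components. Matching the two sides of $\Delta((x\otimes k\otimes y)(w\otimes\ell\otimes z)) = \Delta(x\otimes k\otimes y)\Delta(w\otimes\ell\otimes z)$ requires repeatedly: collapsing the $\sum_a f^a\otimes (\cdots e_a\cdots)$ terms using $\langle f^a, b\rangle e_a = b$ and the comodule-coalgebra compatibilities; pushing coactions through $\cdot_{\underline{\rm op}}$ using that the coaction is an algebra map to the braided tensor product; and reorganising the $\CR$-arguments using \eqref{quabic}, the quasi-cocommutativity \eqref{quacom}, and the hexagon-type relations for the braiding. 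A clean way to keep this manageable is to first establish it on the three sub-Hopf algebras $B^{\underline{\rm op}}$, $A$, $B^*$ separately (where it reduces to the known (co)bosonisation statements in Lemma~\ref{boson}), then on the mixed products involving one generator from each factor using the cross relation, and finally invoke that $\Delta$ and the product are determined on all of $B^{\underline{\rm op}}\otimes A\otimes B^*$ by these cases together with multiplicativity — or, most safely for a self-contained proof, grind the general identity directly with the coquasitriangularity axioms doing the work that $\CR_{12},\CR_{13},\CR_{23}$ did in the proof of Theorem~\ref{Double Bosonisation}.
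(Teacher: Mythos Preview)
Your proposal is correct and mirrors the paper's approach closely: the paper likewise motivates the formulae by dualising Theorem~\ref{Double Bosonisation} in the finite-dimensional case and then gives a direct, self-contained verification of the Hopf algebra axioms (associativity, $\Delta$ an algebra map, coassociativity, and an explicit antipode) valid for any coquasitriangular $A$. The only caveat is your alternative in step (iii) of checking $\Delta$ is multiplicative on the sub-Hopf algebras and mixed products and then ``invoking multiplicativity'' --- that is circular as stated, since multiplicativity is what you are proving; the paper (and your safer option) grinds the general identity $\Delta\big((x\otimes k\otimes y)(w\otimes\ell\otimes z)\big)=\Delta(x\otimes k\otimes y)\Delta(w\otimes\ell\otimes z)$ directly using \eqref{quabic}--\eqref{quacom} and the comodule-coalgebra axioms.
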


Here $B^{\mathrm{\underline{op}}}$, $A$ and $B^{*}$ are subalgebras of $B^{\mathrm{\underline{op}}}\lbiprod A \rbiprod B^{*}$ and identifying $x=x\tens 1\tens 1$, $k=1\tens k\tens 1$ and $y=1\tens 1\tens y$ we have $xky \equiv x\otimes k \otimes y$. We also have algebra maps 
\[ B^{*} \hookrightarrow B^{\mathrm{\underline{op}}}\lbiprod A \rbiprod B^{*} \twoheadrightarrow B^{\mathrm{\underline{op}}}\lbiprod A,\quad B^{\mathrm{\underline{op}}} \hookrightarrow B^{\mathrm{\underline{op}}}\lbiprod A \rbiprod B^{*} \twoheadrightarrow A \rbiprod B^{*}\]
where the surjections are $\mathrm{id}\otimes \underline{\eps}$  and $\underline{\epsilon} \otimes \mathrm{id}$ respectively. It remains to prove Theorem~\ref{codbos}. 
\begin{lemma}
The product stated in Theorem \ref{codbos} is associative.
\end{lemma}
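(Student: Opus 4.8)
The cleanest strategy is to avoid a head-on verification of $(pq)r = p(qr)$ on the full triple product of elements $x\otimes k\otimes y$, which would be a nightmare of coactions, braidings and $\mathcal R$-factors. Instead I would present $B^{\mathrm{\underline{op}}}\lbiprod A\rbiprod B^{*}$ as obtained by applying a known bosonisation construction twice, and then read off associativity from the already-established associativity of those cross products. Concretely: first observe that $A\rbiprod B^{*}$ is precisely the cobosonisation of Lemma~\ref{boson}(dual part), hence an associative (indeed Hopf) algebra, with $A\rcross B^{*}$ as its underlying algebra; its product is $(k\otimes y)(\ell\otimes z) = k\ell\underline{\o}\otimes y^{\bar{\z}}z\,\mathcal R(y^{\bar{\o}},\ell\underline{\t})$. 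Next, check that $B^{\mathrm{\underline{op}}}\in{}^{\bar A}\CM$ is a left $(A\rbiprod B^{*})$-comodule algebra (for the purposes of associativity we only need the module/comodule-\emph{algebra} structure, not the full braided-Hopf compatibility), so that the left cross product $B^{\mathrm{\underline{op}}}\lcross(A\rbiprod B^{*})$ is associative; matching its formula $(x\otimes m)(w\otimes n) = x(m\o\rhd w)\otimes m\t n$ against the claimed product, with $m = k\otimes y$ acting on $w\in B^{\mathrm{\underline{op}}}$ via the coaction-induced action of $A$ and the trivial action of $B^{*}$, should reproduce exactly the stated formula $x\cdot_{\mathrm{\underline{op}}}w^{\bar{\infty}}\otimes k\t\ell\o\otimes y^{\bar{\z}}z\,\mathcal R(y^{\bar{\o}},\ell\t)\mathcal R(Sk\o,w^{\bar{\o}})$.

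\textbf{Key steps, in order.} (1) Write out $(k\otimes y)(\ell\otimes z)$ from the cobosonisation product and confirm it is the product of an associative algebra by invoking Lemma~\ref{boson}; in particular $A$ and $B^{*}$ are subalgebras and $B^{*}$ is the relevant braided-group piece in $\CM^A$. (2) Describe the action $\rhd$ of $A\rbiprod B^{*}$ on $B^{\mathrm{\underline{op}}}$: since $B^{\mathrm{\underline{op}}}$ is an $A$-comodule (in fact a $\bar A$-comodule), the functor ${}^{A}\CM\to{}_{A}^{A}\CM$-type transmutation gives an $A$-action, namely $a\rhd x = \mathcal R(Sa, x^{\bar{\o}})x^{\bar{\infty}}$ (up to the bar-conventions for $\dotop$), and let $B^{*}$ act trivially by $\underline\eps$; then verify this is a module action, that $B^{\mathrm{\underline{op}}}$ is a module \emph{algebra} under it (compatibility of $\rhd$ with $\cdot_{\mathrm{\underline{op}}}$), using only the coaction/coquasitriangular axioms (\ref{quabic})--(\ref{quacom}). (3) Form $B^{\mathrm{\underline{op}}}\lcross(A\rbiprod B^{*})$, which is associative by the standard cross-product lemma, and identify its underlying vector space with $B^{\mathrm{\underline{op}}}\otimes A\otimes B^{*}$. (4) Expand the cross-product formula on generators and check it coincides termwise with the product displayed in Theorem~\ref{codbos}: the $x\cdot_{\mathrm{\underline{op}}}w^{\bar{\infty}}$ factor comes from $x(m\o\rhd w)$, the $\mathcal R(Sk\o,w^{\bar{\o}})$ from the action formula in step (2), the $k\t\ell\o\otimes y^{\bar{\z}}z\,\mathcal R(y^{\bar{\o}},\ell\t)$ block from the cobosonisation product with the coproduct of $k$ split correctly across $m\o\otimes m\t$. (5) Conclude associativity.

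\textbf{Main obstacle.} The delicate point is step (2)--(4): pinning down exactly which coproduct leg of $k\in A$ acts on $w$ and which survives into the $A$-slot, and tracking the bar-side conventions so that the action-induced formula produces precisely $\mathcal R(Sk\o, w^{\bar{\o}})$ with the antipode on the $k\o$ side and not, say, $\mathcal R(w^{\bar{\o}}, Sk\o)$ or a leg of $\bar{\mathcal R}$. Because $B^{\mathrm{\underline{op}}}$ lives in ${}^{\bar A}\CM$ rather than ${}^{A}\CM$, the coaction carries a hidden $\bar{\mathcal R} = \mathcal R^{-1}_{21}$, and one must be careful that the induced action of $A$ on $B^{\mathrm{\underline{op}}}$, when fed into the left cross product, yields the \emph{unbarred} $\mathcal R(Sk\o,\cdot)$ after cancellation. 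I expect this bookkeeping — rather than any structural difficulty — to be where the proof's real work lies; once the dictionary between the stated product and the iterated cross product is fixed, associativity is immediate. (If the referee prefers, one can instead grind out $(pq)r$ versus $p(qr)$ directly, using (\ref{quabic}), (\ref{quacom}), the module-algebra property of $\cdot_{\mathrm{\underline{op}}}$, and coassociativity of the coactions; but the iterated-bosonisation route is shorter and more conceptual.)
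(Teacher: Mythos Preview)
Your approach is correct and genuinely different from the paper's. The paper simply expands $\bigl((x\otimes k\otimes y)(w\otimes\ell\otimes z)\bigr)(m\otimes j\otimes v)$ and $(x\otimes k\otimes y)\bigl((w\otimes\ell\otimes z)(m\otimes j\otimes v)\bigr)$ directly from the stated product formula and checks they coincide using only the left/right coaction axioms on $w,m$ and $y$ together with (\ref{quabic}); no structural identification is made and the whole argument fits in half a page.

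Your iterated-cross-product route is more conceptual and does work: the cobosonisation $A\rbiprod B^{*}$ of Lemma~\ref{boson} gives exactly the $(k\otimes y)(\ell\otimes z)=k\ell\o\otimes y^{\bar{\z}}z\,\mathcal R(y^{\bar{\o}},\ell\t)$ block, and your proposed action $(a\otimes b)\rhd w=\underline\eps(b)\,\mathcal R(Sa,w^{\bar{\o}})w^{\bar{\infi}}$ does make $B^{\underline{\mathrm{op}}}$ a left $(A\rbiprod B^{*})$-module algebra (the module-algebra check reduces, after the counit kills the $B^{*}$-leg via (\ref{right-comod-coalg}), to $\mathcal R(Sa,v^{\bar{\o}}w^{\bar{\o}})=\mathcal R(Sa\o,v^{\bar{\o}})\mathcal R(Sa\t,w^{\bar{\o}})$, which is (\ref{quabic}) plus the fact that $\Delta_L$ respects $\cdot_{\underline{\mathrm{op}}}$). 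Feeding the cross-coproduct $\Delta(k\otimes y)=k\o\otimes y\underline{\o}^{\bar{\z}}\otimes k\t y\underline{\o}^{\bar{\o}}\otimes y\underline{\t}$ into $(x\otimes m)(w\otimes n)=x(m\o\rhd w)\otimes m\t n$ then collapses, via (\ref{right-comod-coalg}) again, to precisely the product of Theorem~\ref{codbos}. The ``main obstacle'' you flag about $\bar A$ versus $A$ is harmless here: the underlying comodule structure is the same, and the action $a\rhd w=\mathcal R(Sa,w^{\bar{\o}})w^{\bar{\infi}}$ is already a well-defined $A$-action regardless of which braiding one prefers on the category. What your approach buys is an explanation of \emph{why} associativity holds and a clean factorisation of the algebra as $B^{\underline{\mathrm{op}}}\lcross(A\rbiprod B^{*})$; what the paper's direct check buys is brevity and no dependence on the full Hopf structure of $A\rbiprod B^{*}$ (only the coaction axioms and (\ref{quabic}) are used).
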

\begin{proof} We expand the definition of the product to find
\begin{align*}
\Big((x\otimes &k \otimes y)(w \otimes \ell \otimes z)\Big)(m \otimes j \otimes v)\\
 =& (x\dotop w^{\bar{\infi}} \otimes k{\t}\ell{\o} \otimes y^{\bar{\z}}z)(m \otimes j \otimes v) \ \mathcal{R}(y^{\bar{\o}}, \ell{\t})\mathcal{R}(Sk{\o}, w^{\bar{\o}})\\
=& x\dotop w^{\bar{\infi}}\dotop m^{\bar{\infi}}\otimes k{\th}\ell{\t}j{\o} \otimes y^{\bar{\z}\bar{\z}}z^{\bar{\z}}v \ \mathcal{R}(y^{\bar{\o}}, \ell{\th})\mathcal{R}(Sk{\o}, w^{\bar{\o}})\\
&\quad\mathcal{R}(y^{\bar{\z}\bar{\o}}z^{\bar{\o}},j{\t})\mathcal{R}(S(k{\t}\ell{\o}), m^{\bar{\o}})\\
=&x\dotop w^{\bar{\infi}}\dotop m^{\bar{\infi}}\otimes k{\th}\ell{\t}j{\o} \otimes y^{\bar{\z}}z^{\bar{\z}}v \ \mathcal{R}(y^{\bar{\o}}{\t}, \ell{\th})\mathcal{R}(Sk{\o}, w^{\bar{\o}})\\
&\quad\mathcal{R}(y^{\bar{\o}}{\o}, j{\t})\mathcal{R}(z^{\bar{\o}}, j{\th})\mathcal{R}(S\ell{\o}, m^{\bar{\o}}{\o})\mathcal{R}(Sk{\t}, m^{\bar{\o}}{\t}),
\end{align*}
where the last equality uses the right-coaction property on $y$. Similarly,
\begin{align*}
(x\otimes & k \otimes y)\Big((w \otimes \ell \otimes z)(m \otimes j \otimes v)\Big)\\
=&(x \otimes k \otimes y)(w\dotop m^{\bar{\infi}}\otimes \ell{\t}j{\o} \otimes z^{\bar{\z}}v) \ \mathcal{R}(z^{\bar{\o}}, j{\t})\mathcal{R}(S\ell{\o}, m^{\bar{\o}})\\
=&x\dotop w^{\bar{\infi}}\dotop m^{\bar{\infi}\bar{\infi}}\otimes k{\t}\ell{\t}j{\o} \otimes y^{\bar{\z}}z^{\bar{\z}}v \ \mathcal{R}(z^{\bar{\o}}, j{\th})\mathcal{R}(S\ell{\o}, m^{\bar{\o}})\\
&\quad\mathcal{R}(y^{\bar{\o}}, \ell{\th}j{\t})\mathcal{R}(Sk{\o}, w^{\o}m^{\bar{\infi}\bar{\o}}),
\end{align*}
which by the left-coaction property on $m$ agrees with our first calculation.  
\end{proof}
\begin{lemma}
The coproduct $\Delta$ stated in Theorem \ref{codbos} is an algebra map.
\end{lemma}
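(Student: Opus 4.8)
The plan is to verify the compatibility condition $\Delta\big((x\otimes k\otimes y)(w\otimes\ell\otimes z)\big)=\Delta(x\otimes k\otimes y)\,\Delta(w\otimes\ell\otimes z)$, where the product on the right is the braided tensor product in the appropriate sense, i.e. the product of the Hopf algebra $(B^{\underline{\mathrm{op}}}\lbiprod A\rbiprod B^*)^{\otimes 2}$. Rather than attacking the full formula head-on, I would exploit the two one-sided cross product/coproduct structures already built in. Recall from Theorem~\ref{codbos} that $B^{\underline{\mathrm{op}}}\lbiprod A$ sits inside as a subalgebra and $A\rbiprod B^*$ likewise; and that on the coalgebra side there are the surjections onto $B^{\underline{\mathrm{op}}}\lbiprod A$ and onto $A\rbiprod B^*$. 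So the first step is to check the claim on the three generating subalgebras separately: on $B^{\underline{\mathrm{op}}}$, on $A$, and on $B^*$. On $A$ this is just the statement that $A$ is a Hopf algebra with its given coproduct twisted by the identifications; on $B^*$ it reduces to the cobosonisation $A\rbiprod B^*$ of Lemma~\ref{boson} (its coproduct is the $A\rcocross B^*$ one), and on $B^{\underline{\mathrm{op}}}$ to the bosonisation $B^{\underline{\mathrm{op}}}\lbiprod A$ — both already known to be Hopf algebras.

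Next I would reduce the general case to the generators. Since both sides of the desired identity are, by the previous lemma, well-defined algebra-valued expressions and since any element of $B^{\underline{\mathrm{op}}}\lbiprod A\rbiprod B^*$ is a sum of products $xky$ with $x\in B^{\underline{\mathrm{op}}}$, $k\in A$, $y\in B^*$, it suffices to verify multiplicativity of $\Delta$ on pairs of generators from the three factors. The diagonal pairs (both in the same factor) are handled by the sub-Hopf-algebra / cobosonisation results above. The genuinely new content is the six mixed cases: $B^{\underline{\mathrm{op}}}$ against $A$, $B^{\underline{\mathrm{op}}}$ against $B^*$, $A$ against $B^*$, and the three with the factors in the other order. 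The cases where the left generator comes from a factor listed to the left of the right generator's factor (e.g. $x\in B^{\underline{\mathrm{op}}}$ times $\ell\in A$, or $k\in A$ times $z\in B^*$) are almost immediate from the definition of the product, which simply concatenates; the interesting ones are the "wrong-order" products, above all $y\,w$ with $y\in B^*$ and $w\in B^{\underline{\mathrm{op}}}$, and $y\,k$, $k\,w$ — these are where the coquasitriangular structure $\mathcal{R}$ and the induced (co)actions enter, exactly as in the cross relation.

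For these mixed cases the technique is the standard one: expand $\Delta$ on the product using the formula in Theorem~\ref{codbos}, expand the product $\Delta(\cdot)\Delta(\cdot)$ using the (already-proved-associative) product formula applied leg-by-leg in the tensor-square, and then systematically push everything into a normal form using: the axioms \eqref{quabic}–\eqref{quacom} for $\mathcal{R}$; the comodule-coalgebra identities \eqref{right-comod-coalg} for $B^*$ and \eqref{left-comod-coalg} for $B^{\underline{\mathrm{op}}}$; the fact that $\underline{\Delta}$ on a braided group is an algebra map to the braided tensor product, together with \eqref{braidedS}; and the hexagon/compatibility of $\mathcal{R}$ with the braiding $\Psi^L,\Psi^R$. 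The dual bases $\{e_a\}\subset B$, $\{f^a\}\subset B^*$ will need to be manipulated via the usual rank-one resolution of identity $\sum_a f^a\langle\ ,e_a\rangle$ and its compatibility with coactions (the coaction on $f^a$ is dual to the action on $e_a$), which is precisely the structure used to define $\overline{\exp}$ in Theorem~\ref{Double Bosonisation}.

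I expect the main obstacle to be bookkeeping rather than conceptual: the coproduct formula already carries a quadruple Sweedler split of $k$, a double braided-coproduct of $x$ and of $y$, and a sum over the basis with three coactions $e_a\underline{\o}^{\bar\o},e_a\underline{\th}^{\bar\o},e_a\underline{\t}$, so in the mixed cases one is reconciling two different ways of distributing roughly a dozen indexed legs, and the two $\mathcal{R}$-factors $\mathcal{R}(e_a\underline{\o}^{\bar\o},\ldots)$ and $\mathcal{R}(S(\ldots),e_a\underline{\th}^{\bar\o})$ must be moved past the braided products $\dotop$ correctly. The cleanest route is probably to not verify the full formula at once but to first record, as sub-lemmas, the three "half-coproducts" obtained by applying $\id\otimes\underline\eps$ or $\underline\eps\otimes\id$ to various legs — these recover the coproducts of $B^{\underline{\mathrm{op}}}\lbiprod A$ and $A\rbiprod B^*$ and of $B^*$ as a braided group — and then assemble multiplicativity on generators from these, using that a coalgebra map out of a biproduct is determined by its components. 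Throughout, one keeps in mind that the whole construction is, by design, the arrow-reversal of Theorem~\ref{Double Bosonisation}, so every identity needed here is the transpose of one used there; when $A$ is finite-dimensional this could even be made into a one-line proof by dualising, and the present direct computation is just the self-contained version valid for infinite-dimensional $A$.
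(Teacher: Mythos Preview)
Your reduction strategy rests on a premise that fails for the co-double bosonisation. You claim that on $B^*$ the coproduct ``reduces to the cobosonisation $A\rbiprod B^*$'' and on $B^{\underline{\mathrm{op}}}$ to $B^{\underline{\mathrm{op}}}\lbiprod A$, so that multiplicativity within each factor is inherited from those known Hopf algebras. But the inclusions of $B^{\underline{\mathrm{op}}},A,B^*$ into $B^{\underline{\mathrm{op}}}\lbiprod A\rbiprod B^*$ are only \emph{algebra} maps; it is the \emph{surjections} onto $B^{\underline{\mathrm{op}}}\lbiprod A$ and $A\rbiprod B^*$ that respect the coalgebra (see the remarks immediately after Theorem~\ref{codbos}, and the comment in the Introduction that these coalgebra surjections make calculations ``harder than the original version''). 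This is exactly dual to Theorem~\ref{Double Bosonisation}, where the two halves are sub-Hopf algebras and a cross-relation argument is available; here they are quotient Hopf algebras and no such shortcut exists. Concretely, $\Delta(1\otimes 1\otimes y)$ already lands outside $(1\otimes A\otimes B^*)^{\otimes 2}$: the fourth tensor leg $e_{a}\underline{\o}^{\bar{\infi}}\dotop\bar{S}e_{a}\underline{\th}^{\bar{\infi}}$ is a nontrivial element of $B^{\underline{\mathrm{op}}}$ once the pairing $\langle y{\underline{\o}}^{\bar{\z}},e_{a}\underline{\t}\rangle$ selects $e_a\ne 1$. (In the example of Theorem~\ref{cdbthm} this is visible: $\Delta Y$ contains terms $Y^{a+1}\otimes X^{a}t$ with $X\in B^{\underline{\mathrm{op}}}$.) So even the ``diagonal'' identity $\Delta(yz)=\Delta(y)\Delta(z)$ for $y,z\in B^*$ is a statement about the full co-double coproduct and product, not about the cobosonisation, and is essentially as hard as the general case. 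The same applies to the ``almost immediate'' ordered mixed cases: while $x\cdot\ell=x\otimes\ell\otimes 1$ as a product, $\Delta(x\otimes\ell\otimes 1)$ is still the full formula with its $\sum_a f^a\otimes e_a$ contribution.

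There is also a smaller logical gap: knowing $\Delta(gh)=\Delta(g)\Delta(h)$ for $g,h$ each in one of the three subalgebras does not by itself yield multiplicativity on all of $H$; the two-factor bootstrap you have in mind, iterated, still needs the case $\Delta((xk)y)=\Delta(xk)\Delta(y)$ with $xk$ a general element of $B^{\underline{\mathrm{op}}}\otimes A$, which is not one of your nine pair cases. The paper sidesteps both issues by computing $\Delta\big((x\otimes k\otimes y)(w\otimes\ell\otimes z)\big)$ and $\Delta(x\otimes k\otimes y)\,\Delta(w\otimes\ell\otimes z)$ head-on for arbitrary elements and reducing each to a common normal form via (\ref{quabic})--(\ref{quacom}), the comodule--coalgebra identities (\ref{left-comod-coalg})--(\ref{right-comod-coalg}), and duality moves trading coactions on $f^a$ for coactions on $e_a$. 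Your closing remark is correct: when $A$ is finite-dimensional one can dualise Theorem~\ref{Double Bosonisation} in one line; short of that, the direct computation is what is actually required.
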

\begin{proof}
 Expanding the product and then the coproduct, we have 
 \begin{align*}
\Delta&\Big((x\otimes k \otimes y)(w \otimes \ell \otimes z)\Big)\\
=&x{\underline{\o}}\dotop w^{\bar{\infi}}{\underline{\o}}^{\bar{\infi}}\otimes x{\underline{\t}}^{\bar{\infi}\bar{\o}}{\o}w^{\bar{\infi}}{\underline{\t}}^{\bar{\o}}{{\o}}k{\t}\ell{\o}\otimes f^{a}\\
&\otimes e_{a}\underline{\o}^{\bar{\infi}}\dotop x{\underline{\t}}^{\bar{\infi}\bar{\infi}}\dotop w^{\bar{\infi}}{\underline{\t}}^{\bar{\infi}}\dotop \bar{S}e_{a}\underline{\th}^{\bar{\infi}}\otimes k{\fiv}\ell{\fo}y^{\bar{\z}}{\underline{\o}}^{\bar{\o}}{\t}z{\underline{\o}}^{\bar{\z}\bar{\o}}{\t}\\
&\otimes y^{\bar{\z}}{\underline{\t}}^{\bar{\z}}z{\underline{\t}} \mathcal{R}(e_{a}\underline{\o}^{\bar{\o}},x{\underline{\t}}^{\bar{\infi}\bar{\o}}{\t}w^{\bar{\infi}}{\underline{\t}}^{\bar{\infi}\bar\o}{}\t     k{\th}\ell{\t})\mathcal{R}(S(k{\fo}\ell{\th}y^{\bar{\z}}{\underline{\o}}^{\bar{\o}}{\o}z{\underline{\o}}^{\bar{\z}\bar{\o}}{\o}),e_{a}\underline{\th}^{\bar{\o}}) \\
&\mathcal{R}(Sk{\o},w^{\bar{\o}})\mathcal{R}(y^{\bar{\o}},\ell{\fiv})\mathcal{R}(Sx{\underline{\t}}^{\bar{\o}},w^{\bar{\infi}}{\underline{\o}}^{\bar{\o}})\mathcal{R}(y^{\bar{\z}}{\underline{\t}}^{\bar{\o}},z{\underline{\o}}^{\bar{\o}}) \ \langle y^{\bar{\z}}{\underline{\o}}^{\bar{\z}}z{\underline{\o}}^{\bar{\z}\bar{\z}},e_{a}\underline{\t} \rangle\\
=&x{\underline{\o}}\dotop w{\underline{\o}}^{\bar{\infi}}\otimes x{\underline{\t}}^{\bar{\o}}{\t}w{\underline{\t}}^{\bar{\o}}{\t}k{\t}\ell{\o}\otimes f^{a} \otimes e_{a}\underline{\o}^{\bar{\infi}}\dotop x{\underline{\t}}^{\bar{\infi}}\dotop w{\underline{\t}}^{\bar{\infi}}\dotop \bar{S}e_{a}\underline{\fo}^{\bar{\infi}}\\
&\otimes k{\fiv}\ell{\fo}y{\underline{\o}}^{\bar{\o}}{\t}z{\underline{\o}}^{\bar{\o}}{\t}\otimes y{\underline{\t}}^{\bar{\z}}z{\underline{\t}} \ \mathcal{R}(e_{a}\underline{\o}^{\bar{\o}}, x{\underline{\t}}^{\bar{\o}}{\th}w{\underline{\t}}^{\bar{\o}}{\th}k{\th}\ell{\t})\\
&\mathcal{R}(S(k{\fo}\ell{\th}y{\underline{\o}}^{\bar{\o}}{\o}z{\underline{\o}}^{\bar{\o}}{\o}),e_{a}\underline{\fo}^{\bar{\o}}) \mathcal{R}(Sk{\o}, w{\underline{\o}}^{\bar{\o}}{\o}w{\underline{\t}}^{\bar{\o}}{\o})\mathcal{R}(y{\underline{\o}}^{\bar{\o}}{\th}y{\underline{\t}}^{\bar{\o}}{\t},\ell{\fiv})\\
&~~~~\mathcal{R}(Sx{\underline{\t}}^{\bar{\o}}{\o},w{\underline{\o}}^{\bar{\o}}{\t})\mathcal{R}(y{\underline{\t}}^{\bar{\o}}{\o},z{\underline{\o}}^{\bar{\o}}) \ \langle y{\underline{\o}}^{\bar{\z}},e_{a}\underline{\t} \rangle \langle z{\underline{\o}}^{\bar{\z}},e_{a}\underline{\th} \rangle\\
=&x{\underline{\o}}\dotop w{\underline{\o}}^{\bar{\infi}}\otimes x{\underline{\t}}^{\bar{\o}}{\t}w{\underline{\t}}^{\bar{\o}}{\t}k{\th}\ell{\t}\tens f^a \otimes e_{a}\underline{\o}^{\bar{\infi}}\dotop x{\underline{\t}}^{\bar{\infi}}\dotop w{\underline{\t}}^{\bar{\infi}}\dotop \bar{S}e_{a}\underline{\fo}^{\bar{\infi}}\\
&\otimes k{\six}\ell{\fo}y{\underline{\o}}^{\bar{\o}}{\t}z{\underline{\o}}^{\bar{\o}}{\t}\otimes y{\underline{\t}}^{\bar{\z}}z{\underline{\t}} \ \mathcal{R}(e_{a}\underline{\o}^{\bar{\o}}, x{\underline{\t}}^{\bar{\o}}{\th}w{\underline{\t}}^{\bar{\o}}{\th}k{\fo}\ell{\t})\\
&\mathcal{R}(S(k{\fiv}\ell{\th}y{\underline{\o}}^{\bar{\o}}{\o}z{\underline{\o}}^{\bar{\o}}{\o}),e_{a}\underline{\fo}^{\bar{\o}}) \mathcal{R}(S(x{\underline{\t}}^{\bar{\o}}{\o}k{\o}),w{\underline{\o}}^{\bar{\o}}) \mathcal{R}(Sk{\t}, w{\underline{\t}}^{\bar{\o}}{\o})\\
&\mathcal{R}(y{\underline{\o}}^{\bar{\o}}{\th},\ell{\fiv}) \mathcal{R}(y{\underline{\t}}^{\bar{\o}},\ell{\six}z{\underline{\o}}^{\bar{\o}})\ \langle y{\underline{\o}}^{\bar{\z}}, e_{a}\underline{\t} \rangle \langle z{\underline{\o}}^{\bar{\z}},e_{a}\underline{\th} \rangle
\end{align*}
for all  $x,w \in B^{\mathrm{op}}$, $k,\ell \in A$,  $y,z \in B^{*}$. The second equality uses the comodule coalgebra property (\ref{left-comod-coalg}) on $w$ and coassociativity. The last expression uses coquasitriangularity (\ref{quabic}) to gather the parts of $w{\underline{\o}}^{\bar{\o}}$ and $y{\underline{\t}}^{\bar{\o}}$ inside $\mathcal{R}$. On the other side,
\begin{align*}
\Delta&(x\otimes k \otimes y)\Delta(w \otimes \ell \otimes z)\\
=&x{\underline{\o}}\dotop w{\underline{\o}}^{\bar{\infi}}\otimes x{\underline{\t}}^{\bar{\o}}{\o\t}k{\o\t}w{\underline{\t}}^{\bar{\o}}{\o\o}\ell{\o\o}\otimes f^{a\bar{\z}}f^{b}\\
&\otimes e_{a}\underline{\o}^{\bar{\infi}}\dotop x{\underline{\t}}^{\bar{\infi}}\dotop \bar{S}e_{a}\underline{\th}^{\bar{\infi}}\dotop e_{b}\underline{\o}^{\bar{\infi}\bar{\infi}}\dotop w{\underline{\t}}^{\bar{\infi}\bar{\infi}}\dotop \bar{S}e_{b}\underline{\th}^{\bar{\infi}\bar{\infi}}\\
&\otimes k{\fo\t}y{\underline{\o}}^{\bar{\o}}{\t\t}\ell{\fo\o}z{\underline{\o}}^{\bar{\o}}{\t\o}\otimes y{\underline{\t}}^{\bar{\z}}z{\underline{\t}} \ \mathcal{R}(e_{a}\underline{\o}^{\bar{\o}},x{\underline{\t}}^{\bar{\o}}{\t}k{\t})\\
&\mathcal{R}(S(k{\th}y{\underline{\o}}^{\bar{\o}}{\o}),e_{a}\underline{\th}^{\o})\mathcal{R}(e_{b}\underline{\o}^{\bar{\o}},w{\underline{\t}}^{\bar{\o}}{\t}\ell{\t}) \mathcal{R}(S(\ell{\th}z{\underline{\o}}^{\bar{\o}}{\o}),e_{b}\underline{\th}^{\bar{\o}}) \\
&\mathcal{R}(S(x{\underline{\t}}^{\bar{\o}}{\o\o}k{\o\o}),w{\underline{\o}}^{\bar{\o}})\mathcal{R}(f^{a\bar{\o}},w{\underline{\t}}^{\bar{\o}}{\o\t}\ell{\o\t})\\
&\mathcal{R}(S(k{\fo\o}z{\underline{\o}}^{\bar{\o}}{\t\o}),e_{b}\underline{\o}^{\bar{\infi}\bar{\o}}w{\underline{\t}}^{\bar{\infi}\bar{\o}}e_{b}\underline{\th}^{\bar{\infi}\bar{\o}})\mathcal{R}(y{\underline{\t}}^{\bar{\o}},\ell{\fo\t}z{\underline{\o}}^{\bar{\o}}{\t\t})\\
&\langle y{\underline{\o}}^{\bar{\z}},e_{a}\underline{\t} \rangle \langle z{\underline{\o}}^{\bar{\z}},e_{b}\underline{\t} \rangle\\
=&x{\underline{\o}}\dotop w{\underline{\o}}^{\bar{\infi}} \otimes x{\underline{\t}}^{\bar{\o}}{\t}k{\t}w{\underline{\t}}^{\bar{\o}}{\o}\ell{\o}\otimes f^{a}f^{b}\\
&\otimes e_{a}\underline{\o}^{\bar{\infi}\bar{\infi}}\dotop x{\underline{\t}}^{\bar{\infi}}\dotop \bar{S}e_{a}\underline{\th}^{\bar{\infi}\bar{\infi}}\dotop e_{a}\underline{\fo}^{\bar{\infi}\bar{\infi}}\dotop w{\underline{\t}}^{\bar{\infi}\bar{\infi}}\dotop \bar{S}e_{a}\underline{\six}^{\bar{\infi}\bar{\infi}}\\
&\otimes k{\six}y{\underline{\o}}^{\bar{\o}}{\th}\ell{\fiv}z{\underline{\o}}^{\bar{\o}}{\t}\otimes y{\underline{\t}}^{\bar{\z}}z{\underline{\t}} \ \mathcal{R}(e_{a}\underline{\o}^{\bar{\infi}\bar{\o}}, x{\underline{\t}}^{\bar{\o}}{\th}k{\th})\\
&\mathcal{R}(S(k{\fo}y{\underline{\o}}^{\bar{\o}}{\o}), e_{a}\underline{\th}^{\bar{\infi}\bar{\o}})\mathcal{R}(e_{a}\underline{\fo}^{\bar{\o}},w{\underline{\t}}^{\bar{\o}}{\th}\ell{\th})\mathcal{R}(S(\ell{\fo}z{\underline{\o}}^{\bar{\o}}{\o}),e_{a}\underline{\six}^{\bar{\o}})\\
&\mathcal{R}(S(x{\underline{\t}}^{\bar{\o}}{\o}k{\o}),w{\underline{\o}}^{\bar{\o}})\mathcal{R}(e_{a}\underline{\o}^{\bar{\o}}e_{a}\underline{\t}^{\bar{\o}}e_{a}\underline{\th}^{\bar{\o}},w{\underline{\t}}^{\bar{\o}}{\t}\ell{\t})\\
&\mathcal{R}(S(k{\fiv}y{\underline{\o}}^{\bar{\o}}{\t}),e_{a}\underline{\fo}^{\bar{\infi}\bar{\o}}w{\underline{\t}}^{\bar{\infi}\bar{\o}}e_{a}{\underline{\six}}^{\bar{\infi}\bar{\o}})\mathcal{R}(y{\underline{\t}}^{\bar{\o}}, \ell{\six}z{\underline{\o}}^{\bar{\o}}{\th})\\
&\langle y{\underline{\o}}^{\bar{\z}}, e_{a}\underline{\t}^{\bar{\infi}} \rangle \langle z{\underline{\o}}^{\bar{\z}}, e_{a}\underline{\fiv} \rangle\\
=&x{\underline{\o}}\dotop w{\underline{\o}}^{\bar{\infi}}\otimes x{\underline{\t}}^{\bar{\o}}{\t}k{\t}w{\underline{\t}}^{\bar{\o}}{\o}\ell{\o}\otimes f^{a} \otimes e_{a}\underline{\o}^{\bar{\infi}}\dotop x{\underline{\t}}^{\bar{\infi}}\dotop w{\underline{\t}}^{\bar{\infi}}\dotop \bar{S}e_{a}\underline{\fo}^{\bar{\infi}}\\
&\otimes k{\fiv}y{\underline{\o}}^{\bar{\o}}{\th}\ell{\fo}z{\underline{\o}}^{\bar{\o}}{\t}\otimes y{\underline{\t}}^{\bar{\z}}z{\underline{\t}} \ \mathcal{R}(e_{a}\underline{\o}^{\bar{\o}}{\t}, x{\underline{\t}}^{\bar{\o}}{\th}k{\th})\mathcal{R}(S(\ell{\th}z{\underline{\o}}^{\bar{\o}}{\o}),e_{a}\underline{\fo}^{\bar{\o}}{\o})\\
&\mathcal{R}(S(x{\underline{\t}}^{\bar{\o}}{\o}k{\o}),w{\underline{\o}}^{\bar{\o}}) \mathcal{R}(e_{a}\underline{\o}^{\bar{\o}}{\o}y{\underline{\o}}^{\bar{\o}}{\o}, w{\underline{\t}}^{\bar{\o}}{\t}\ell{\t})\\
&\mathcal{R}(S(k{\fo}y{\underline{\o}}^{\bar{\o}}{\t}), w{\underline{\t}}^{\bar{\o}}{\th}e_{a}\underline{\fo}^{\bar{\o}}{\t})\mathcal{R}(y{\underline{\t}}^{\bar{\o}}, \ell{\fiv}z{\underline{\o}}^{\bar{\o}}{\th}) \langle y{\underline{\o}}^{\bar{\z}}, e_{a}\underline{\t} \rangle \langle z{\underline{\o}}^{\bar{\z}}, e_{a}\underline{\th} \rangle\\
=&x{\underline{\o}}\dotop w{\underline{\o}}^{\bar{\infi}} \otimes x{\underline{\t}}^{\bar{\o}}{\t}k{\t}w{\underline{\t}}^{\bar{\o}}{\o}\ell{\o}\otimes f^{a}\otimes e_{a}\underline{\o}^{\bar{\infi}}\dotop x{\underline{\t}}^{\bar{\infi}}\dotop w{\underline{\t}}^{\bar{\infi}}\dotop \bar{S}e_{a}\underline{\fo}^{\bar{\infi}}\\
&\otimes k{\six}y{\underline{\o}}^{\bar{\o}}{\th}\ell{\fiv}z{\underline{\o}}^{\bar{\o}}{\t} \otimes y{\underline{\t}}^{\bar{\z}}z{\t}\mathcal{R}(e_{a}\underline{\o}^{\bar{\o}}, x{\underline{\t}}^{\bar{\o}}{\th}k{\th}w{\underline{\t}}^{\bar{\o}}{\t}\ell{\t}) \\
&\mathcal{R}(S(k{\fiv}y{\underline{\o}}^{\bar{\o}}{\t}\ell{\fo}z{\underline{\o}}^{\bar{\o}}{\o}), e_{a}\underline{\fo}^{\bar{\o}})\mathcal{R}(S(x{\underline{\t}}^{\bar{\o}}{\o}k{\o}),w{\underline{\o}}^{\bar{\o}})\mathcal{R}(y{\underline{\t}}^{\bar{\o}}, \ell{\six}z{\underline{\o}}^{\bar{\o}}{\th}) \\
&\mathcal{R}(y{\underline{\o}}^{\bar{\o}}{\o}, \ell{\th})\mathcal{R}(Sk{\fo}, w{\underline{\t}}^{\bar{\o}}{\th})\langle y{\underline{\o}}^{\bar{\z}}, e_{a}\underline{\t} \rangle \langle z{\underline{\o}}^{\bar{\z}},e_{a}\underline{\th} \rangle, 
\end{align*}
where the second equality uses duality $\langle f^{a\bar{\z}}, e_{a} \rangle f^{a\bar{\o}}=\langle f^{a}, e_{a}^{\bar{\infi}} \rangle e_{a}^{\bar{\o}}$ followed by the comodule coalgebra property (\ref{left-comod-coalg}) on $e_{a}$. The third equality cancels $(\bar{S}e_{a}\underline{\th}\dotop e_{a}\underline{\fo})^{\bar{\infi}\bar{\infi}}$ making all subsequent  coactions trivial. The fourth equality uses (\ref{quabic}) to gather the parts of $e_{a}\underline{\o}^{\bar{\o}}$ and $e_{a}\underline{\fo}^{\bar{\o}}$ inside $\CR$, and cancels some $\CR$s. In the final expression, one can use quasicommutativity (\ref{quacom}) to reorder the second tensor factor so as to coincide with the result of the first calculation. 
\end{proof}

\begin{lemma}
The coproduct $\Delta$ stated in Theorem \ref{codbos} is coassociative.
\end{lemma}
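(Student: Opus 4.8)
The plan is to verify $(\Delta\otimes\id)\Delta=(\id\otimes\Delta)\Delta$ on a general element $x\otimes k\otimes y$ directly, in the same spirit as the two preceding lemmas, and hence without assuming $A$ finite dimensional. (When $A$ is finite dimensional, coassociativity is equivalent to associativity of the product of the double bosonisation $B^{*\mathrm{\underline{cop}}}\lbiprod H\rbiprod B$ of Theorem~\ref{Double Bosonisation} under the pairing $\langle B^{\mathrm{\underline{op}}}\lbiprod A\rbiprod B^{*},\,B^{*\mathrm{\underline{cop}}}\lbiprod H\rbiprod B\rangle$; but we want to avoid that restriction.) One helpful reduction is to note that $B^{\mathrm{\underline{op}}}\lbiprod A\rbiprod B^{*}$ is generated as an algebra by the subalgebras $B^{\mathrm{\underline{op}}}$, $A$ and $B^{*}$ (one checks $(x\tens1\tens1)(1\tens k\tens1)(1\tens1\tens y)=x\tens k\tens y$ using that $1_A$ is grouplike and the trivial braided (co)units), and that both $(\Delta\otimes\id)\Delta$ and $(\id\otimes\Delta)\Delta$ are algebra maps since $\Delta$ is; hence it suffices to check coassociativity on the generators $x\tens1\tens1$, $1\tens k\tens1$ and $1\tens1\tens y$, where the coproduct formula collapses considerably (the $A$-factor behaving as $\Delta_A$, and on the $B^{\mathrm{\underline{op}}}$ and $B^{*}$ generators the $e_a,f^a$-sum simplifying via the trivial factors' (co)units, the module-algebra property and the braided-antipode axiom). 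The combinatorial content is the same in either route.

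In the expansion, on the left $\Delta$ is applied to the first leg, which carries $x\underline{\o}$, a piece of the $A$-coaction of $x\underline{\t}$ together with part of $k$, and $f^{a}$; this introduces a second basis sum over $\{e_{b}\},\{f^{b}\}$ and, crucially, the braided coproduct $\underline\Delta f^{a}$, dual to the product of $B$. On the right $\Delta$ is applied to the second leg, which carries the triple $\dotop$-product $e_{a}\underline{\o}^{\bar{\infi}}\dotop x\underline{\t}^{\bar{\infi}}\dotop\bar Se_{a}\underline{\th}^{\bar{\infi}}$, part of $k$ with part of the $A$-coaction of $y\underline{\o}$, and $y\underline{\t}$; here one needs $\underline\Delta$ of a $\dotop$-product, which, since $\underline\Delta$ is a braided-algebra map, is the $\Psi^{-1}$-braided tensor product of the factors' $\underline\Delta$'s, together with the braided-antipode laws (\ref{braidedS}) applied to $\bar S=\underline S^{-1}$. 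The identities that should force the two sides together are: coassociativity of $A$ (used repeatedly to redistribute the proliferating copies of $k$); coassociativity of $\underline\Delta$ on $B$ (hence on $B^{\mathrm{\underline{op}}}$, same coalgebra) and on $B^{*}$; the left comodule-coalgebra law (\ref{left-comod-coalg}) for $B^{\mathrm{\underline{op}}}$ and the right comodule-coalgebra law (\ref{right-comod-coalg}) for $B^{*}$, with the comodule-algebra properties of the coactions; the coquasitriangularity axioms (\ref{quabic}) to split and re-gather the arguments of $\CR$ and (\ref{quacom}) to reorder the $A$-factor at the very end; and, centrally, the reproducing identities for the canonical element $\sum_{a}f^{a}\otimes e_{a}$ — namely $\sum_{a}\langle f^{a},b\rangle e_{a}=b$ with its dual, the expressions of $\sum_{a}f^{a}\otimes\underline\Delta e_{a}$ and of $\sum_{a}\underline\Delta f^{a}\otimes e_{a}$ through the products of $B^{*}$ and $B$, and compatibility of the pairing with the coactions (trading a coaction on $e_{a}$ for one on the paired $f^{a}$, as in Lemma~\ref{lemma about B op braid}). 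One arranges the calculation so that, after inserting $\underline\Delta f^{a}$ on the left and $\underline\Delta$ of the $\dotop$-product on the right, the auxiliary sums $\{e_{b}\},\{f^{b}\}$ are absorbed against the pairings $\langle y\underline{\o}^{\bar{\z}},e_{(-)}\underline{\t}\rangle$ already present, leaving a single sum over $\{e_{a}\},\{f^{a}\}$ with $e_{a}$ split five- or six-fold by coassociativity of $\underline\Delta$; a final rematching of the $\CR$-arguments by (\ref{quabic})--(\ref{quacom}) then identifies the two expressions, just as in the proof that $\Delta$ is an algebra map.

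The step I expect to be the main obstacle is exactly this bookkeeping of how the canonical element $\sum_{a}f^{a}\otimes e_{a}$ fragments and reassembles: on the left the splitting arrives through $\underline\Delta f^{a}$ and its pairing with $e_{b}$, on the right through the braided coproduct of a triple $\dotop$-product of $x\underline{\t}^{\bar{\infi}}$ with $e_{a}\underline{\o}$ and $\bar Se_{a}\underline{\th}$, and reconciling the two forces one to use coassociativity of $\underline\Delta$ on $B$ and the braided-antipode law (\ref{braidedS}) in concert, all while the coactions $(-)^{\bar{\o}},(-)^{\bar{\infi}}$ and the $\CR$-factors are being pushed through the braidings $\Psi^{\pm1}$. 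The interaction of $\bar S=\underline S^{-1}$ with the braided-opposite product $\dotop$ — so that the antipode axioms occur with reversed braiding, in $\bar\CC$ — and keeping the naturality of the coactions straight at every move are where order- and placement-errors are most likely; once these canonical-element manipulations are carried through, the remaining $\CR$-rearrangement is routine.
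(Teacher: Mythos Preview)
Your plan is correct and essentially matches the paper's approach: a direct expansion of $(\id\otimes\Delta)\Delta$ and $(\Delta\otimes\id)\Delta$ on a general $x\otimes k\otimes y$, using exactly the ingredients you list --- the comodule-coalgebra laws (\ref{left-comod-coalg})--(\ref{right-comod-coalg}), coquasitriangularity (\ref{quabic})--(\ref{quacom}), the canonical-element duality identities, and the braided-antipode law (\ref{braidedS}). Your diagnosis of the main obstacle is accurate: the paper's calculation spends most of its effort precisely on how the second basis sum (its $\{e_b\},\{f^b\}$) is introduced on one side through $\underline\Delta f^{a}$ (converted via duality into a $\dotop$-product of $e$'s) and on the other side through applying $\underline\Delta$ to the triple $\dotop$-product $e_{a}\underline{\o}^{\bar\infi}\dotop x\underline{\t}^{\bar\infi}\dotop\bar S e_{a}\underline{\th}^{\bar\infi}$, and then collapsed back against the pairings $\langle y\underline{\,\cdot\,}^{\bar\z},e_{(-)}\rangle$.

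One point of difference worth noting: the paper carries the full general element throughout and never invokes the reduction to generators you propose. Your observation that, by the preceding lemma, both $(\Delta\otimes\id)\Delta$ and $(\id\otimes\Delta)\Delta$ are algebra maps and hence need only be compared on $x\otimes1\otimes1$, $1\otimes k\otimes1$ and $1\otimes1\otimes y$ is valid and would genuinely shorten the bookkeeping --- on each such generator one of the three Sweedler expansions (of $x$, of $k$, or of $y$) trivialises, and in particular the $\langle y\underline{\o}^{\bar\z},e_{a}\underline{\t}\rangle$ factor collapses the $e_a$-coproduct when $y=1$. The paper's route has the virtue of being a single uniform computation, but your reduction buys a real simplification at no logical cost given the order of the lemmas.
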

\begin{proof} We expand the definition of the coproduct to find
\begin{align*}
(\mathrm{id}&\otimes \Delta)\Delta(x\otimes k \otimes y)\\
=&x{\underline{\o}}\otimes x{\underline{\t}}^{\bar{\o}}{\o}x{\underline{\th}}^{\bar{\o}}{\o}k{\o}\otimes f^{a} \otimes e_{a}\underline{\o}^{\bar{\infi}}\dotop x{\underline{\t}}^{\bar{\infi}}\dotop \bar{S}e_{a}\underline{\fiv}^{\bar{\infi}}\\
&\otimes e_{a}\underline{\t}^{\bar{\o}}{\fo}x{\underline{\th}}^{\bar{\o}}{\fo}e_{a}\underline{\fo}^{\bar{\o}}{\th}k{\fo}y{\underline{\o}}^{\bar{\o}}{\t}\otimes f^{b}\\
&\otimes e_{b}\underline{\o}^{\bar{\infi}}\dotop e_{a}\underline{\t}^{\bar{\infi}}\dotop x{\underline{\th}}^{\bar{\infi}}\dotop \bar{S}e_{a}\underline{\fo}^{\bar{\infi}}\dotop \bar{S}e_{b}\underline{\th}^{\bar{\infi}}\otimes k{\sev}y{\underline{\o}}^{\bar{\o}}{\fiv}y{\underline{\t}}^{\bar{\o}}{\t} \otimes y{\underline{\th}}\\
&\langle y{\underline{\o}}^{\bar{\z}}, e_{a}\underline{\th}\rangle \langle y{\underline{\t}}^{\bar{\z}},e_{b}\underline{\t} \rangle \ \mathcal{R}(e_{b}\underline{\o}^{\bar{\o}},e_{a}\underline{\t}^{\bar{\o}}{\fiv}x{\underline{\th}}^{\bar{\o}}{\fiv}e_{a}\underline{\fo}^{\bar{\o}}{\fo}k{\fiv}y{\underline{\o}}^{\bar{\o}}{\th}) \\
&\mathcal{R}(S(k{\six}y{\underline{\o}}^{\bar{\o}}{\fo}y{\underline{\t}}^{\bar{\o}}{\o}),e_{b}\underline{\th}^{\bar{\o}})\mathcal{R}(e_{a}\underline{\o}^{\bar{\o}}e_{a}\underline{\t}^{\bar{\o}}{\o}, x{\underline{\t}}^{\bar{\o}}{\t}x{\underline{\th}}^{\bar{\o}}{\t}k{\t})\\
&\mathcal{R}(S(k{\th}y{\underline{\o}}^{\bar{\o}}{\o}),e_{a}\underline{\fo}^{\bar{\o}}{\o}e_{a}\underline{\fiv}^{\bar{\o}}{\o})\mathcal{R}(S(e_{a}\underline{\t}^{\bar{\o}}{\th}x{\underline{\th}}^{\bar{\o}}{\th}),e_{a}\underline{\fiv}^{\bar{\o}}{\th})\\
&\mathcal{R}(Se_{a}\underline{\fo}^{\bar{\o}}{\t},e_{a}\underline{\fiv}^{\bar{\o}}{\t})\mathcal{R}(Se_{a}\underline{\t}^{\bar{\o}}{\t},x{\underline{\t}}^{\bar{\o}}{\th})\\
=&x{\underline{\o}}\otimes x{\underline{\t}}^{\bar{\o}}{\o}x{\underline{\th}}^{\bar{\o}}{\o}k{\o}\otimes f^{a} \otimes e_{a}\underline{\o}^{\bar{\infi}}\dotop x{\underline{\t}}^{\bar{\infi}}\dotop \bar{S}e_{a}\underline{\fiv}^{\bar{\infi}}\\
&\otimes e_{a}\underline{\t}^{\bar{\o}}{\th}x{\underline{\th}}^{\bar{\o}}{\fiv}e_{a}\underline{\fo}^{\bar{\o}}{\th}k{\six}y{\underline{\o}}^{\bar{\o}}{\th}\otimes f^{b}\\
&\otimes e_{b}\underline{\o}^{\bar{\infi}}\dotop e_{a}\underline{\t}^{\bar{\infi}}\dotop x{\underline{\th}}^{\bar{\infi}}\dotop \bar{S}e_{a}\underline{\fo}^{\bar{\infi}}\dotop \bar{S}e_{b}\underline{\th}^{\bar{\infi}}\otimes k{\nine}y{\underline{\o}}^{\bar{\o}}{\six}y{\underline{\t}}^{\bar{\o}}{\t}\otimes y{\underline{\th}}\\
&\langle y{\underline{\o}}^{\bar{\z}},e_{a}\underline{\th} \rangle \langle y{\underline{\t}}^{\bar{\z}},e_{b}\underline{\t}  \rangle \ \mathcal{R}(e_{b}\underline{\o}^{\bar{\o}}, e_{a}\underline{\t}^{\bar{\o}}{\fo}x{\underline{\th}}^{\bar{\o}}{\six}e_{a}\underline{\fo}^{\bar{\o}}{\fo}k{\sev}y{\underline{\o}}^{\bar{\o}}{\fo})\\
&\mathcal{R}(S(k{\ei}y{\underline{\o}}^{\bar{\o}}{\fiv}y{\underline{\t}}^{\bar{\o}}{\o}),e_{b}\underline{\th}^{\bar{\o}})\mathcal{R}(e_{a}\underline{\o}^{\bar{\o}}, x{\underline{\t}}^{\bar{\o}}{\t}x{\underline{\th}}^{\bar{\o}}{\t}k{\t})\\
&\mathcal{R}(e_{a}\underline{\t}^{\bar{\o}}{\o}, x{\underline{\th}}^{\bar{\o}}{\th}k{\th})\mathcal{R}(S(e_{a}\underline{\t}^{\bar{\o}}{\t}x{\underline{\th}}^{\bar{\o}}{\fo}e_{a}\underline{\fo}^{\bar{\o}}{\t}k{\fiv}y{\underline{\o}}^{\bar{\o}}{\t}),e_{a}\underline{\fiv}^{\bar{\o}})\\
&\mathcal{R}(S(k{\fo}y{\underline{\o}}^{\bar{\o}}{\o}),e_{a}\underline{\fo}^{\bar{\o}}{\o})\\
=&x{\underline{\o}}\otimes x{\underline{\t}}^{\bar{\o}}{\o}x{\underline{\th}}^{\bar{\o}}{\o}k{\o}\otimes f^{a} \otimes e_{a}\underline{\o}^{\bar{\infi}}\dotop x{\underline{\t}}^{\bar{\infi}}\dotop \bar{S}e_{a}\underline{\fiv}^{\bar{\infi}}\\
&\otimes x{\underline{\th}}^{\bar{\o}}{\fo}e_{a}\underline{\t}^{\bar{\o}}{\th}k{\fiv}e_{a}\underline{\fo}^{\bar{\o}}{\th}y{\underline{\o}}^{\bar{\o}}{\th} \otimes f^{b}\\
&\otimes e_{b}\underline{\o}^{\bar{\infi}}\dotop e_{a}\underline{\t}^{\bar{\infi}}\dotop x{\underline{\th}}^{\bar{\infi}}\dotop \bar{S}e_{a}\underline{\fo}^{\bar{\infi}}\dotop \bar{S}e_{b}\underline{\th}^{\bar{\infi}}\otimes k{\nine}y{\underline{\o}}^{\bar{\o}}{\six}y{\underline{\t}}^{\bar{\o}}{\t}\otimes y{\underline{\th}}\\
&\langle y{\underline{\o}}^{\bar{\z}}, e_{a}\underline{\th} \rangle \langle y{\underline{\t}}^{\bar{\z}}, e_{b}\underline{\t} \rangle \ \mathcal{R}(e_{b}\underline{\o}^{\bar{\o}}, x{\underline{\th}}^{\bar{\o}}{\fiv}e_{a}\underline{\t}^{\bar{\o}}{\fo}k{\six}e_{a}\underline{\fo}^{\bar{\o}}{\fo}y{\underline{\o}}^{\bar{\o}}{\fo})\\
&\mathcal{R}(S(k{\ei}y{\underline{\o}}^{\bar{\o}}{\fiv}y{\underline{\t}}^{\bar{\o}}{\o}),e_{b}\underline{\th}^{\bar{\o}}) \mathcal{R}(e_{a}\underline{\o}^{\bar{\o}},x{\underline{\t}}^{\bar{\o}}{\t}x{\underline{\th}}^{\bar{\o}}{\t}k{\t})\\
&\mathcal{R}(S(x{\underline{\th}}^{\bar{\o}}{\th}e_{a}\underline{\t}^{\bar{\o}}{\t}k{\fo}e_{a}\underline{\fo}^{\bar{\o}}{\t}y{\underline{\o}}^{\bar{\o}}{\t}),e_{a}\underline{\fiv}^{\bar{\o}})\mathcal{R}(e_{a}\underline{\t}^{\bar{\o}}{\o},k{\th})\\
&\mathcal{R}(e_{a}\underline{\t}^{\bar{\o}}{\fiv}, x{\underline{\th}}^{\bar{\o}}{\six})\mathcal{R}(Sy{\underline{\o}}^{\bar{\o}}{\o},e_{a}\underline{\fo}^{\bar{\o}}{\o})\mathcal{R}(Sk{\sev},e_{a}\underline{\fo}^{\bar{\o}}{\fiv}),
\end{align*}
where the second equality  uses (\ref{quabic}) to gather the parts of $e_{a}\underline{\o}^{\bar{\o}}$ and $e_{a}\underline{\fiv}^{\bar{\o}}$,  cancelling some of the $\CR$s. We lastly  use (\ref{quacom}) to change the order in the fifth tensor factor and in a similar term inside $\mathcal{R}$, again cancelling some of the  $\CR$s. On the other side,
\begin{align*}
(\Delta &\otimes \mathrm{id})\Delta(x\otimes k \otimes y)\\
=&x{\underline{\o}}\otimes x{\underline{\t}}^{\bar{\o}}{\o}x{\underline{\th}}^{\bar{\o}}{\o}k{\o}\otimes f^{b}\otimes e_{b}\underline{\o}^{\bar{\infi}}\dotop x{\underline{\t}}^{\bar{\infi}}\dotop \bar{S}e_{b}\underline{\th}^{\bar{\infi}}\otimes x{\underline{\th}}^{\bar{\o}}{\fo}k{\fo}f^{a}{\underline{\o}}^{\bar{\o}}{\t}\\
&\otimes f^{a}{\underline{\t}}\otimes e_{a}\underline{\o}^{\bar{\infi}}\dotop x{\underline{\th}}^{\bar{\infi}}\dotop \bar{S}e_{a}\underline{\th}^{\bar{\infi}}\otimes k{\sev}y{\underline{\o}}^{\bar{\o}}{\t}\otimes y{\underline{\t}} \ \langle y{\underline{\o}}^{\bar{\z}},e_{a}\underline{\t} \rangle \langle f^{a}{\underline{\o}}^{\bar{\z}}, e_{b}\underline{\t} \rangle\\
&\mathcal{R}(e_{b}\underline{\o}^{\bar{\o}}, x{\underline{\t}}^{\bar{\o}}{\t}x{\underline{\th}}^{\bar{\o}}{\t}k{\t})\mathcal{R}(S(x{\underline{\th}}^{\bar{\o}}{\th}k{\th}f^{a}{\underline{\o}}^{\bar{\o}}{\o}),e_{b}\underline{\th}^{\bar{\o}})\\
&\mathcal{R}(e_{a}\underline{\o}^{\bar{\o}}, x{\underline{\th}}^{\bar{\o}}{\fiv}k{\fiv})\mathcal{R}(S(k{\six}y{\underline{\o}}^{\bar{\o}}{}_{\o}),e_{a}\underline{\th}^{\bar{\o}})\\
=&x{\underline{\o}}\otimes x{\underline{\t}}^{\bar{\o}}{\o}x{\underline{\th}}^{\bar{\o}}{\o}k{\o}\otimes f^{b}\otimes e_{b}\underline{\o}^{\bar{\infi}}\dotop x{\underline{\t}}^{\bar{\infi}}\dotop \bar{S}e_{b}\underline{\th}^{\bar{\infi}}\otimes x{\underline{\th}}^{\bar{\o}}{\fo}k{\fo}e_{b}\underline{\t}^{\bar{\o}}{\t}\\
&\otimes f^{c} \otimes (e_{c}^{\bar{\infi}}\dotop e_{a}^{\bar{\infi}}){\underline{\o}}^{\bar{\infi}}\dotop x{\underline{\th}}^{\bar{\infi}}\bar{S}(e_{c}^{\bar{\infi}}\dotop e_{a}^{\bar{\infi}}){\underline{\th}}^{\bar{\infi}}\otimes k{\sev}y{\underline{\o}}^{\bar{\o}}{\t}\otimes y{\underline{\t}}\\
&\langle f^{a},e_{b}\underline{\t}^{\bar{\infi}} \rangle \langle y{\underline{\o}}^{\bar{\z}}, (e_{c}^{\bar{\infi}}\dotop e_{a}^{\bar{\infi}}){\underline{\t}} \rangle\ \mathcal{R}(e_{b}\underline{\o}^{\bar{\o}}, x{\underline{\t}}^{\bar{\o}}{\t}x{\underline{\th}}^{\bar{\o}}{\t}k{\t})\\
&\mathcal{R}(S(x{\underline{\th}}^{\bar{\o}}{\th}k{\th}e_{b}\underline{\t}^{\bar{\o}}{\o}),e_{b}\underline{\th}^{\bar{\o}})\mathcal{R}((e_{c}^{\bar{\infi}}\dotop e_{a}^{\bar{\infi}}){\underline{\o}}^{\bar{\o}}, x{\underline{\th}}^{\bar{\o}}{\fiv}k{\fiv})\\
&\mathcal{R}(S(k{\six}y{\underline{\o}}^{\bar{\o}}{\o}),(e_{c}^{\bar{\infi}}\dotop e_{a}^{\bar{\infi}}){\underline{\th}}^{\bar{\o}})\CR(e_c ^{\bar{\o}}, e_a ^{\bar{\o}})\\
=&x{\underline{\o}}\otimes x{\underline{\t}}^{\bar{\o}}{\o}x{\underline{\th}}^{\bar{\o}}{\o}k{\o}\otimes f^{b}\otimes e_{b}\underline{\o}^{\bar{\infi}}\dotop x{\underline{\t}}^{\bar{\infi}}\dotop \bar{S}e_{b}\underline{\fiv}^{\bar{\infi}}\\
&\otimes x{\underline{\th}}^{\bar{\o}}{\fo}k{\fo}e_{b}\underline{\t}^{\bar{\o}}{\t}e_{b}\underline{\th}^{\bar{\o}}{\t}e_{b}\underline{\fo}^{\bar{\o}}{\t}\otimes f^{c}\\
&\otimes e_{c}\underline{\o}^{\bar{\infi}}\dotop e_{b}\underline{\t}^{\bar{\infi}}\dotop x{\underline{\th}}^{\bar{\infi}}\dotop \bar{S}(e_{c}\underline{\th}^{\bar{\infi}}\dotop e_{b}\underline{\fo}^{\bar{\infi}})\otimes k{\sev}y{\underline{\o}}^{\bar{\o}}{\t}\otimes y{\t}\\
&\langle y{\underline{\o}}^{\bar{\z}}, e_{c}\underline{\t}^{\bar{\infi}}\dotop e_{b}\underline{\th}^{\bar{\infi}} \rangle \ \mathcal{R}(e_{b}\underline{\o}^{\bar{\o}}{\o}, x{\underline{\t}}^{\bar{\o}}{\t}x{\underline{\th}}^{\bar{\o}}{\t}k{\t})\\
&\mathcal{R}(S(x{\underline{\th}}^{\bar{\o}}{\th}k{\th}e_{b}\underline{\t}^{\bar{\o}}{\o}e_{b}\underline{\th}^{\bar{\o}}{\o}e_{b}\underline{\fo}^{\bar{\o}}{\o}), e_{b}\underline{\fiv}^{\bar{\o}})\mathcal{R}(e_{c}\underline{\o}^{\bar{\o}}{\t}e_{b}\underline{\t}^{\bar{\o}}{\six}, x{\underline{\th}}^{\bar{\o}}{\fiv}k{\fiv})\\
&\mathcal{R}(S(k{\six}y{\underline{\o}}^{\bar{\o}}{\o}),e_{c}\underline{\th}^{\bar{\o}}{\th}e_{b}\underline{\fo}^{\bar{\o}}{\fo})\mathcal{R}(e_{c}\underline{\o}^{\bar{\o}}{\o}e_{c}\underline{\t}^{\bar{\o}}{\o}e_{c}\underline{\th}^{\bar{\o}}{\o},e_{b}\underline{\t}^{\bar{\o}}{\th}e_{b}\underline{\th}^{\bar{\o}}{\th}e_{b}\underline{\fo}^{\bar{\o}}{\th})\\
&\mathcal{R}(Se_{c}\underline{\th}^{\bar{\o}}{\t} e_{b}\underline{\t}^{\bar{\o}}{\fo}e_{b}\underline{\th}^{\bar{\o}}{\fo})\mathcal{R}(Se_{c}\underline{\t}^{\bar{\o}}{\t},e_{b}\underline{\t}^{\bar{\o}}{\fiv}). 
\end{align*}
For the second equality we use  duality $\langle f^{a}{\underline{\o}}^{\bar{\z}},e_{b}\underline{\t} \rangle f^{a}{\underline{\o}}^{\bar{\o}} = \langle f^{a}{\underline{\o}}, e_{b}\underline{\t}^{\bar{\infi}} \rangle e_{b}\underline{\t}^{\bar{\o}}$ to replace $f^{a}{\underline{\o}}^{\bar{\o}}$ by $e_{b}\underline{\t}^{\bar{\o}}$, followed by 
\[ e_{a}\otimes f^{a}{\underline{\o}}\otimes f^{a}{\underline{\t}} = e_{c}^{\bar{\infi}}\dotop e_{a}^{\bar{\infi}}\otimes f^{a}\otimes f^{c} \ \CR(e_c ^{\bar{\o}}, e_a ^{\bar{\o}})\]
to replace $f^{a}{\underline{\o}}\tens f^{a}{\underline{\t}}$ by $f^{a}\tens f^{c}$. For the third equality, we use $\langle f^{a},e_{b}\underline{\t}^{\bar{\infi}} \rangle$ to replace $e_{a}$ by $e_{b}\underline{\t}^{\bar{\infi}}$, after which we expand $(e_{c}^{\bar{\infi}}\dotop e_{b}\underline{\t}^{\bar{\infi}\bar{\infi}})\underline{\o}$ etc. using $\underline\Delta$ a braided-homomorphism.  In the last expression, we expand $\bar{S}$ of a $\dotop$ product and  use
\begin{align*}
\langle y{\underline{\o}}^{\bar{\z}}, e_{c}\underline{\t}^{\bar{\infi}}\dotop e_{b}\underline{\th}^{\bar{\infi}} \rangle=\langle y{\underline{\o}}^{\bar{\z}}{\underline{\o}}, e_{b}\underline{\th}^{\bar{\infi}} \rangle \langle y{\underline{\o}}^{\bar{\z}}{\underline{\t}}, e_{c}\underline{\t}^{\bar{\infi}} \rangle \CR(Se_{c}\underline{\t}^{\bar{\infi}\bar{\o}}, e_{b}\underline{\th}^{\bar{\infi}\bar{\o}}).
\end{align*}
By the comodule coalgebra property (\ref{right-comod-coalg}), the first pairing on the right becomes $\langle y{\underline{\o}}^{\bar{\z}}, e_{b}\underline{\th}^{\bar{\infi}} \rangle\langle y{\underline{\t}}^{\bar{\z}}, e_{c}\underline{\t}^{\bar{\infi}} \rangle$ and duality $\langle y{\underline{\o}}^{\bar{\z}}, e_{b}\underline{\th}^{\bar{\infi}} \rangle e_{b}\underline{\th}^{\bar{\o}}=\langle y{\underline{\o}}^{\bar{\z}\bar{\z}}, e_{b}\underline{\th} \rangle y{\underline{\o}}^{\bar{\z}\bar{\o}}$ replaces $e_{b}\underline{\th}^{\bar{\o}}$ by $y{\underline{\o}}^{\bar{\z}\bar{\o}}$. The other pairing similarly  replaces $e_{c}\underline{\t}^{\bar{\o}}$  by $y{\underline{\t}}^{\bar{\z}\bar{\o}}$, so
\begin{align*}
(\Delta &\otimes \mathrm{id})\Delta(x\otimes k \otimes y)\\
=&x{\underline{\o}}\otimes x{\underline{\t}}^{\bar{\o}}{\o}x{\underline{\th}}^{\bar{\o}}{\o}k{\o}\otimes f^{b}\otimes e_{b}\underline{\o}^{\bar{\infi}}\dotop x{\underline{\t}}^{\bar{\infi}}\dotop \bar{S}e_{b}\underline{\fiv}^{\bar{\infi}}\\
&\otimes x{\underline{\th}}^{\bar{\o}}{\fo}k{\fo}e_{b}\underline{\t}^{\bar{\o}}{\t}y{\underline{\o}}^{\bar{\o}}{\t}e_{b}\underline{\fo}^{\bar{\o}}{\t}\otimes f^{c}\\
&\otimes e_{c}\underline{\o}^{\bar{\infi}}\dotop e_{b}\underline{\t}^{\bar{\infi}}\dotop x{\underline{\th}}^{\bar{\infi}}\dotop \bar{S}e_{b}\underline{\fo}^{\bar{\infi}}\dotop\bar{S}e_{c}\underline{\th}^{\bar{\infi}} \otimes k{\sev}y{\underline{\o}}^{\bar{\o}}{\sev}y{\underline{\t}}^{\bar{\o}}{\fiv}\otimes y{\th}\\
&\langle y{\underline{\o}}^{\bar{\z}}, e_{b}\underline{\th} \rangle \langle y{\underline{\t}}^{\bar{\z}}, e_{c}\underline{\t} \rangle \ \mathcal{R}(e_{b}\underline{\o}^{\bar{\o}}, x{\underline{\t}}^{\bar{\o}}{\t}x{\underline{\th}}^{\bar{\o}}{\t}k{\t})\\
&\mathcal{R}(S(x{\underline{\th}}^{\bar{\o}}{\th}k{\th}e_{b}\underline{\t}^{\bar{\o}}{\o}y{\underline{\o}}^{\bar{\o}}{\o}e_{b}\underline{\fo}^{\bar{\o}}{\o}),e_{b}\underline{\fiv}^{\bar{\o}})\mathcal{R}(e_{c}\underline{\o}^{\bar{\o}}{\t}e_{b}\underline{\t}^{\bar{\o}}{\six}, x{\underline{\th}}^{\bar{\o}}{\fiv}k{\fiv})\\
&\mathcal{R}(S(k{\six}y{\underline{\o}}^{\bar{\o}}{\six}y{\underline{\t}}^{\bar{\o}}{\fo}),e_{c}\underline{\th}^{\bar{\o}}{\th}e_{b}\underline{\fo}^{\bar{\o}}{\fo})\mathcal{R}(e_{c}\underline{\o}^{\bar{\o}}{\o}y{\underline{\t}}^{\bar{\o}}{\o}e_{c}\underline{\t}^{\bar{\o}}{\o}, e_{b}\underline{\t}^{\bar{\o}}{\th}y{\underline{\o}}^{\bar{\o}}{\th}e_{b}\underline{\fo}^{\bar{\o}}{\o})\\
&\mathcal{R}(Se_{c}\underline{\th}^{\bar{\o}}{\t},e_{b}\underline{\t}^{\bar{\o}}{\fo}y{\underline{\o}}^{\bar{\o}}{\fo})\mathcal{R}(Sy{\underline{\t}}^{\bar{\o}}{\t},e_{b}\underline{\t}^{\bar{\o}}{}{\fiv})\mathcal{R}(Sy{\underline{\t}}^{\bar{\o}}{\th},y{\underline{\o}}^{\bar{\o}}{\fiv})\mathcal{R}(Se_{c}\underline{\th}^{\bar{\o}}{\fo},e_{b}\underline{\fo}^{\bar{\o}}{\fiv})\\
=&x{\underline{\o}}\otimes x{\underline{\t}}^{\bar{\o}}{\o}x{\underline{\th}}^{\bar{\o}}{\o}k{\o}\otimes f^{b}\otimes e_{b}\underline{\o}^{\bar{\infi}}\dotop x{\underline{\t}}^{\bar{\infi}}\dotop \bar{S}e_{b}\underline{\fiv}^{\bar{\infi}}\\
&\otimes x{\underline{\th}}^{\bar{\o}}{\fo}k{\fo}e_{b}\underline{\t}^{\bar{\o}}{\t}y{\underline{\o}}^{\bar{\o}}{\t}e_{b}\underline{\fo}^{\bar{\o}}{\t}\otimes f^{c}\\
&\otimes e_{c}\underline{\o}^{\bar{\infi}}\dotop e_{b}\underline{\t}^{\bar{\infi}}\dotop x{\underline{\th}}^{\bar{\infi}}\dotop \bar{S}e_{b}\underline{\fo}^{\bar{\infi}}\dotop\bar{S}e_{c}\underline{\th}^{\bar{\infi}} \otimes k{\nine}y{\underline{\o}}^{\bar{\o}}{\six}y{\underline{\t}}^{\bar{\o}}{\t}\otimes y{\th}\\
&\langle y{\underline{\o}}^{\bar{\z}}, e_{b}\underline{\th} \rangle \langle y{\underline{\t}}^{\bar{\z}}, e_{c}\underline{\t} \rangle \ \mathcal{R}(e_{b}\underline{\o}^{\bar{\o}}, x{\underline{\t}}^{\bar{\o}}{\t}x{\underline{\th}}^{\bar{\o}}{\t}k{\t})\\
&\mathcal{R}(S(x{\underline{\th}}^{\bar{\o}}{\th}k{\th}e_{b}\underline{\t}^{\bar{\o}}{\o}y{\underline{\o}}^{\bar{\o}}{\o}e_{b}\underline{\fo}^{\bar{\o}}{\o}),e_{b}\underline{\fiv}^{\bar{\o}})\mathcal{R}(S(k{\ei}y{\underline{\o}}^{\bar{\o}}{\fiv}y{\underline{\t}}^{\bar{\o}}{\o}),e_{c}\underline{\th}^{\bar{\o}})\\
&\mathcal{R}(S(k{\sev}y{\underline{\o}}^{\bar{\o}}{\fo}),e_{b}\underline{\fo}^{\bar{\o}}{\fo})\mathcal{R}(e_{c}\underline{\o}^{\bar{\o}}, x{\underline{\th}}^{\bar{\o}}{\fiv}k{\fiv}e_{b}\underline{\t}^{\bar{\o}}{\th}y{\underline{\o}}^{\bar{\o}}{\th}e_{b}\underline{\fo}^{\bar{\o}}{\th})\\
&\mathcal{R}(e_{b}\underline{\t}^{\bar{\o}}{\fo}, x{\underline{\th}}^{\bar{\o}}{\six}k{\six}), 
\end{align*}
using (\ref{quabic}) to gather $e_{c}\underline{\o}^{\bar{\o}}$ and $e_{c}\underline{\th}^{\bar{\o}}$, and cancelling some $\mathcal{R}$s. In the final expression one can use (\ref{quacom})  to change the order in the fifth tensor factor  as well as inside $\mathcal{R}$, to recover our calculation of $(\mathrm{id}\otimes \Delta)\Delta(x\otimes k \otimes y)$ up to a change of basis labels. \end{proof}

\begin{lemma}\label{antipode}
The antipode of $B^{\mathrm{op}}\lbiprod A \rbiprod B^{*}$ in Theorem \ref{codbos} is given by
\begin{align*}
S(x\otimes k \otimes y)=&\bar{S}(e_{a}\underline{\o}^{\bar{\infi}}\dotop x^{\bar{\infi}}\dotop \bar{S}e_{a}\underline{\th}^{\bar{\infi}}) \otimes S(x^{\bar{\o}}{\t}k{\t}f^{a\bar{\o}}{\th})\otimes \underline{S}f^{a\bar{\z}}\\
&\mathcal{R}(f^{a\bar{\o}}{\o},S(x^{\bar{\o}}{\o}k{\o})) \mathcal{R}(S^{2}(k{\fiv}y^{\bar{\o}}{\t}),e_{a}\underline{\o}^{\bar{\o}}{\th}x^{\bar{\o}}{\fiv}e_{a}\underline{\th}^{\bar{\o}}{\th})\\
&\mathcal{R}(e_{a}\underline{\o}^{\bar{\o}}{\o}, x^{\bar{\o}}{\th}k{\th})\mathcal{R}(S(k{\fo}y^{\bar{\o}}{\o}), e_{a}\underline{\th}^{\bar{\o}}{\o}) \ \langle y^{\bar{\z}},e_{a}\underline{\t} \rangle\\
&v(f^{a\bar{\o}}{\t})\bar{u}(e_{a}\underline{\o}^{\bar{\o}}{\t}x^{\bar{\o}}{\fo}e_{a}\underline{\th}^{\bar{\o}}{\t}), 
\end{align*}
where $v(k)=\mathcal{R}(k{\o},Sk{\t})$ and $\bar{u}(k)=\mathcal{R}(S^{2}k{\o},k{\t})$.
\end{lemma}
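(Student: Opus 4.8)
The plan is to take the displayed expression as the \emph{definition} of a linear map $S$ on $B^{\mathrm{\underline{op}}}\lbiprod A\rbiprod B^{*}$ and to check directly that it satisfies the antipode axioms $\cdot\circ(S\otimes\mathrm{id})\circ\Delta=\eta\eps=\cdot\circ(\mathrm{id}\otimes S)\circ\Delta$ with respect to the coproduct of Theorem~\ref{codbos}; since we already know from the previous lemmas that we have a bialgebra, this establishes both that an antipode exists and that it is the stated one. The formula is not a blind guess: as $S$ must be anti-multiplicative and $x\otimes k\otimes y=(x\otimes 1\otimes 1)(1\otimes k\otimes 1)(1\otimes 1\otimes y)$ in the algebra, it is enough to know $S$ on the three subalgebras. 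On $1\otimes k\otimes 1$ we must recover the antipode of $A$, since $A$ is a sub-Hopf algebra of $B^{\mathrm{\underline{op}}}\lbiprod A$; while $x\otimes 1\otimes 1$ and $1\otimes 1\otimes y$ lie in the sub-Hopf algebras $B^{\mathrm{\underline{op}}}\lbiprod A$ and $A\rbiprod B^{*}$, whose antipodes are the known cobosonisation (Radford--Majid biproduct) antipodes built from $\bar S$ on $B^{\mathrm{\underline{op}}}$, $\underline S$ on $B^{*}$ and $S$ on $A$. Multiplying these three out with the product of Theorem~\ref{codbos} and collapsing the resulting $\mathcal{R}$-factors yields the stated expression; when $A$ is finite dimensional the same formula is forced by Hopf algebra duality with the double bosonisation of Theorem~\ref{Double Bosonisation} under the pairing $\langle B^{\mathrm{\underline{op}}}\lbiprod A\rbiprod B^{*},\,B^{*\mathrm{\underline{cop}}}\lbiprod H\rbiprod B\rangle$, which gives a useful cross-check.

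For the verification proper I would apply $\cdot\circ(\mathrm{id}\otimes S)\circ\Delta$ to a general element $x\otimes k\otimes y$, substituting the coproduct of Theorem~\ref{codbos}. The computation organises itself by which leg each Sweedler factor lands in. The $B^{\mathrm{\underline{op}}}$-contributions $x{\underline{\o}}$ against $x{\underline{\t}},x{\underline{\th}}$ are recombined using the braided antipode axiom $\dotop\circ(\mathrm{id}\otimes\bar S)\circ\underline\Delta=\eta\underline\eps$ of $B^{\mathrm{\underline{op}}}$ together with \eqref{braidedS}; the $B^{*}$-contributions are recombined using the corresponding braided antipode axiom for $B^{*}$, with the duality pairing $\langle f^{a},e_{a}\rangle$ used to contract the sum over the basis $\{e_a\}$; and the $A$-contributions are handled by the antipode axioms of $A$ and by the coquasitriangularity identities \eqref{quabic} and \eqref{quacom}, which also move the various $\mathcal{R}$-factors past one another so that most of them cancel in conjugate pairs. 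The comodule-coalgebra compatibilities \eqref{left-comod-coalg} and \eqref{right-comod-coalg} are used throughout to push the $A$-coactions through the braided coproducts of $B^{\mathrm{\underline{op}}}$ and $B^{*}$. The other axiom is then obtained either by an essentially parallel computation or by uniqueness of convolution inverses.

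The functionals $v(k)=\mathcal{R}(k{\o},Sk{\t})$ and $\bar u(k)=\mathcal{R}(S^{2}k{\o},k{\t})$ appear for a structural reason worth keeping in mind during the bookkeeping: $B^{\mathrm{\underline{op}}}$ lives in ${}^{\bar A}\mathcal{M}$ with a \emph{left} coaction while $B^{*}$ lives in $\mathcal{M}^{A}$ with a \emph{right} coaction, so expressing the antipode as a single formula on $B^{\mathrm{\underline{op}}}\otimes A\otimes B^{*}$ forces one to convert a left $\bar A$-coaction into a right $A$-coaction and to pass $\underline S$ and $\bar S$ through the braiding, each such move producing an $S^{2}$-twist whose coefficient is exactly $v$ or $\bar u$. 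The main obstacle is accordingly not conceptual but the volume of Sweedler bookkeeping: the coproduct of Theorem~\ref{codbos} already carries high-order Sweedler expansions, composing with $S$ roughly squares that, and one must carry $\bar S$ on $B^{\mathrm{\underline{op}}}$, $\underline S$ on $B^{*}$ and $S$ on $A$ through the category reversal while applying the (co)quasitriangular and comodule-coalgebra identities in exactly the order that leaves $v$ and $\bar u$ with the arguments shown rather than cancelling them prematurely. Useful sanity checks at the end are to specialise to $x=1$ and to $y=1$ and confirm that the formula restricts to the cobosonisation antipodes on $A\rbiprod B^{*}$ and on $B^{\mathrm{\underline{op}}}\lbiprod A$.
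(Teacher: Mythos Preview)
Your proposal is correct and takes essentially the same approach as the paper: define $S$ by the stated formula and verify the convolution-inverse identities directly by Sweedler computation, using the braided antipode axioms, coquasitriangularity, and the comodule-coalgebra compatibilities. The paper's execution is slightly more pointed than your description in one respect: rather than organising the cancellation factor-by-factor as you outline, it observes at the outset that in $(S\otimes\mathrm{id})\circ\Delta$ the entire $B^{\mathrm{\underline{op}}}$-factor collapses immediately to a counit via $\bar S(\cdots)\dotop(\cdots)=\underline\eps(\cdots)$, and symmetrically in $(\mathrm{id}\otimes S)\circ\Delta$ the $B^{*}$-factor collapses first via $f^{a}\underline S f^{b}=\underline\eps(\cdots)$, after which the remaining $A$-and-$\mathcal R$ bookkeeping is short; your motivational derivation of the formula from the sub-Hopf algebra antipodes and anti-multiplicativity is a useful addition that the paper omits.
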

\begin{proof}
We first compute $(S(x\otimes k \otimes y){\o})(x\otimes k \otimes y){\t}$, which on expanding out the product has in the first tensor factor 
\begin{align*}
\bar{S}(e_{b}&\underline{\o}^{\bar{\infi}}\dotop x{\underline{\o}}^{\bar{\infi}}\dotop \bar{S}e_{b}\underline{\th}^{\bar{\infi}})\dotop e_{a}\underline{\o}^{\bar{\infi}}\dotop x{\underline{\t}}^{\bar{\infi}}\dotop \bar{S}e_{a}\underline{\th}^{\bar{\infi}}\\
=&(\underline{\epsilon}(e_{a}\underline{\o})\underline{\epsilon}(e_{a}\underline{\th})\underline{\epsilon}(x)\underline{\epsilon}(e_{b}\underline{\o})\underline{\epsilon}(e_{b}\underline{\th}))^{\bar{\infi}}, 
\end{align*} 
which further collapses the full expression to give
\begin{align*}
(S(x\otimes & k \otimes y){\o})(x\otimes k \otimes y){\t}
=\underline{\epsilon}(x)\otimes S(k{\t}f^{b\bar{\o}}{\fo})k{\th}y{\underline{\o}}^{\bar{\o}}{\o}\otimes (\underline{S}f^{b\bar{\z}})y{\underline{\t}}\\
&\quad\quad\langle f^{a},e_{b} \rangle \langle y{\underline{\o}}^{\bar{\z}},e_{a} \rangle \ \mathcal{R}(f^{b\bar{\o}}{\t},Sk{\o})\mathcal{R}(f^{b\bar{\o}}{\o},k{\fo}y{\underline{\o}}^{\bar{\o}}{\t})v(f^{b\bar{\o}}{\th})\\
=&\underline{\epsilon}(x) \otimes (Sy{\underline{\o}}^{\bar{\z}\bar{\o}}{\fiv})(Sk{\t})k{\th}y{\underline{\o}}^{\bar{\o}}{\o}\otimes (\underline{S}y{\underline{\o}}^{\bar{\z}\bar{\z}})y{\underline{\t}}\\
&\mathcal{R}(y{\underline{\o}}^{\bar{\z}\bar{\o}}{\t}, Sk{\o})\mathcal{R}(y{\underline{\o}}^{\bar{\z}\bar{\o}}{\o}, y{\underline{\o}}^{\bar{\o}}{\t})\mathcal{R}(y{\underline{\o}}^{\bar{\z}\bar{\o}}{\t},k{\fo})v(y{\underline{\o}}^{\bar{\z}\bar{\o}}{\fo})\\
=&\underline{\epsilon}(x) \otimes \epsilon(k)(Sy{\underline{\o}}^{\bar{\o}}{\th})y{\underline{\o}}^{\bar{\o}}{\fo}\otimes (\underline{S}y{\underline{\o}}^{\bar{\z}})y{\underline{\t}} \ \mathcal{R}(y{\underline{\o}}^{\bar{\o}}{\o}, y{\underline{\o}}^{\bar{\o}}{\fiv})v(y{\underline{\o}}^{\bar{\o}}{\t})\\
=&\underline{\epsilon}(x)\otimes \epsilon(k) \otimes (\underline{S}y{\underline{\o}}^{\bar{\z}})y{\underline{\t}} \ \mathcal{R}(y{\underline{\o}}^{\bar{\o}}{\o}, y{\underline{\o}}^{\bar{\o}}{\th})v(y{\underline{\o}}^{\bar{\o}}{\t})\\ 
=&\underline{\epsilon}(x)\otimes \epsilon(k)\otimes \underline{\epsilon}(y)=\epsilon(x\otimes k \otimes y).
\end{align*}
Similarly, on computing $(x\otimes k \otimes y){\o}(S(x\otimes k \otimes y){\t})$ we have $f^{a\bar{\z}}\underline{S}f^{b\bar{\z}}=(\underline{\epsilon}f^{a}\underline{\epsilon}f^{b})^{\bar{\z}}$ in the third tensor factor which collapses the expressions to give \begin{align*}
(x\otimes &k \otimes  y){\o}(S(x\otimes k \otimes y){\t})
=x{\underline{\o}}\dotop \bar{S}x{\underline{\t}}^{\bar{\infi}\bar{\infi}}\otimes x{\underline{\t}}^{\bar{\o}}{\t}k{\t}S(x{\underline{\t}}^{\bar{\o}}{\th}k{\th}) \otimes \underline{\epsilon}y\\
&\quad\quad\quad\quad\quad\quad\quad\quad\quad\quad\quad\mathcal{R}(S^{2}k{\fo}, x{\underline{\t}}^{\bar{\o}}{\fiv})\mathcal{R}(S(x{\underline{\t}}^{\bar{\o}}{\o}k{\o}), x{\underline{\t}}^{\bar{\infi}\bar{\o}})\bar{u}(x{\underline{\t}}^{\bar{\o}}{\fo})\\
=&x{\underline{\o}}\dotop \bar{S}x{\underline{\t}}^{\bar{\infi}} \otimes x{\underline{\t}}^{\bar{\o}}{\t}k{\t}Sk{\th}Sx{\underline{\t}}^{\bar{\o}}{\th}\otimes \underline{\epsilon}y \ \mathcal{R}(S^{2}k_{\t}, x{\underline{\t}}^{\bar{\o}}{\th})\\
&\mathcal{R}(Sk{\o}Sx{\underline{\t}}^{\bar{\o}}{\o}, x{\underline{\t}}^{\bar{\o}}{\six}) \bar{u}(x{\underline{\t}}^{\bar{\o}}{\fo})\\
=&x{\underline{\o}}\dotop \bar{S}x{\underline{\t}}^{\bar{\infi}}\otimes 1 \otimes \underline{\epsilon}y \ \mathcal{R}(S^{2}k{\t}, x{\underline{\t}}^{\bar{\o}}{\th})\\
&\mathcal{R}(Sk{\o},x{\underline{\t}}^{\bar{\o}}{\fo})\mathcal{R}(Sx{\underline{\t}}^{\bar{\o}}{\o}, x{\underline{\t}}^{\bar{\o}}{\fiv})\bar{u}(x{\underline{\t}}^{\bar{\o}}{\t})\\
=&x{\underline{\o}}\dotop \bar{S}x{\underline{\t}}^{\bar{\infi}}\otimes \epsilon k \otimes \underline{\epsilon}y \ \mathcal{R}(Sx{\underline{\t}}^{\bar{\o}}{\o}, x{\underline{\t}}^{\bar{\o}}{\th})\bar{u}(x{\underline{\t}}^{\bar{\o}}{\t})
=\epsilon(x\otimes k \otimes y).
\end{align*}
\end{proof}

Finally, we show that the co-double bosonisation is coquasitriangular so as to have an inductive construction of such Hopf algebras. 

\begin{proposition}\label{codouble coquasitriangularity}
The co-double bosonisation $B^{\underline{\mathrm{op}}} \lbiprod A \rbiprod B^{*}$ is coquasitriangular with
\begin{align*}
\mathcal{R}(x\otimes k \otimes y, w \otimes \ell \otimes z)=& \langle \underline{S}z^{\bar{\z}}, x \rangle \mathcal{R}(k, \ell z^{\bar{\o}})\underline{\epsilon}(y)\underline{\epsilon}(w)
\end{align*}
for all $x,w \otimes B^{\underline{\mathrm{op}}}$, $k,\ell \in A$, and $y,z \in B^{*}$.
\end{proposition}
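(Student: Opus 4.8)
To show the stated $\mathcal{R}$ is a coquasitriangular structure one must verify the two bialgebra (hexagon) identities (\ref{quabic}), the quasi-commutativity (\ref{quacom}), and convolution-invertibility. The guiding observation is that, because of the factors $\underline{\epsilon}(y)\underline{\epsilon}(w)$, the form $\mathcal{R}$ factors through the two constituent projections: with $\pi=\mathrm{id}\otimes\mathrm{id}\otimes\underline{\epsilon}:B^{\underline{\mathrm{op}}}\lbiprod A\rbiprod B^{*}\twoheadrightarrow B^{\underline{\mathrm{op}}}\lbiprod A$ and $\pi'=\underline{\epsilon}\otimes\mathrm{id}\otimes\mathrm{id}:B^{\underline{\mathrm{op}}}\lbiprod A\rbiprod B^{*}\twoheadrightarrow A\rbiprod B^{*}$ one has $\mathcal{R}(a,b)=\tilde{\mathcal{R}}(\pi(a),\pi'(b))$ for the bilinear form $\tilde{\mathcal{R}}(x\otimes k,\ell\otimes z)=\langle \underline{S}z^{\bar{\z}},x\rangle\,\mathcal{R}(k,\ell z^{\bar{\o}})$ on $(B^{\underline{\mathrm{op}}}\lbiprod A)\otimes(A\rbiprod B^{*})$. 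So I would first record that $\pi,\pi'$ are Hopf algebra surjections: they are algebra maps by Theorem~\ref{codbos}, and one checks directly on the algebra generators $B^{\underline{\mathrm{op}}},A,B^{*}$ that $\Delta\circ\pi=(\pi\otimes\pi)\circ\Delta$ (and similarly for $\pi'$) — in the spirit of the coalgebra surjections to the constituents flagged in the introduction — whereupon equality on all of the Hopf algebra follows because both composites are algebra maps.

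Granting this, the two identities (\ref{quabic}) for $\mathcal{R}$ become, using that $\pi,\pi'$ are bialgebra surjections, the assertion that $\tilde{\mathcal{R}}$ is a \emph{skew Hopf pairing} of $K_{1}:=B^{\underline{\mathrm{op}}}\lbiprod A$ with $K_{2}:=A\rbiprod B^{*}$, i.e. $\tilde{\mathcal{R}}(uv,w)=\tilde{\mathcal{R}}(u,w\o)\tilde{\mathcal{R}}(v,w\t)$ and $\tilde{\mathcal{R}}(u,wv)=\tilde{\mathcal{R}}(u\o,v)\tilde{\mathcal{R}}(u\t,w)$. Since $K_{1}$ is generated as an algebra by $B^{\underline{\mathrm{op}}}$ and $A$, and $K_{2}$ by $A$ and $B^{*}$, it suffices to check these for such generators; on them $\tilde{\mathcal{R}}$ is completely explicit — it equals $\mathcal{R}$ of $A$ on $A\otimes A$, the evaluation $(x,z)\mapsto\langle \underline{S}z,x\rangle$ on $B^{\underline{\mathrm{op}}}\otimes B^{*}$, and a product of counits on $B^{\underline{\mathrm{op}}}\otimes A$ and on $A\otimes B^{*}$. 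The $A$-$A$ block returns exactly the coquasitriangularity of $A$; the $B^{\underline{\mathrm{op}}}$-$B^{*}$ block reduces, by the counit of the $A$-coaction, to the evaluation pairing and comes down to $\underline{S}$ on $B^{*}$ being the braided antipode dual to that of $B$, so that $\langle \underline{S}z,x\dotop x'\rangle=\langle \underline{S}z\underline{\o},x\rangle\langle \underline{S}z\underline{\t},x'\rangle$ and its comultiplicative mirror hold via (\ref{braidedS}) and naturality of the braiding; the mixed blocks use the cross (co)products of the two bosonisations together with the axioms (\ref{quabic})-(\ref{quacom}) for $A$ and the duality between the $A$-coactions on $B$ and on $B^{*}$. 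Convolution-invertibility is then free: (\ref{quabic}) already forces $\mathcal{R}(1,\,\cdot\,)=\epsilon=\mathcal{R}(\,\cdot\,,1)$ (read off the explicit formula), and with the antipode $S$ of Lemma~\ref{antipode} one has $\mathcal{R}(a\o,b\o)\mathcal{R}(Sa\t,b\t)=\mathcal{R}(a\o\,Sa\t,b)=\epsilon(a)\epsilon(b)$ and its left-handed version, so $\mathcal{R}^{-1}(a,b)=\mathcal{R}(Sa,b)$, which unwinds to $\langle z^{\bar{\z}},x\rangle\,\mathcal{R}^{-1}(k,\ell z^{\bar{\o}})\,\underline{\epsilon}(y)\underline{\epsilon}(w)$.

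The quasi-commutativity (\ref{quacom}) does not descend through $\pi,\pi'$, since it involves products in the full Hopf algebra, so here I would instead use that, given (\ref{quabic}), the relation (\ref{quacom}) is multiplicative in each slot, which reduces it to the case where $a$ and $b$ each lie in one of the generating subalgebras $B^{\underline{\mathrm{op}}},A,B^{*}$ — nine concrete identities. The $B^{\underline{\mathrm{op}}}$-$B^{*}$ case is immediate because those two subalgebras commute in $B^{\underline{\mathrm{op}}}\lbiprod A\rbiprod B^{*}$ (a one-line check from the product of Theorem~\ref{codbos}) and the relevant $\mathcal{R}$-values collapse to counits; the $A$-$A$ case is just (\ref{quacom}) for $A$; and the $B^{\underline{\mathrm{op}}}$-$A$ and $A$-$B^{*}$ cases follow from the cross-relations of Theorem~\ref{codbos} with (\ref{quabic})-(\ref{quacom}) for $A$ — indeed those cross-relations are essentially tailored to make (\ref{quacom}) hold.

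I expect these mixed cases of (\ref{quacom}), and to a lesser degree the mixed blocks of the skew-pairing check, to be the main obstacle, since they are exactly where the cross-relations and the coaction-compatibility of $\mathcal{R}$ on $A$ are needed; the remainder is bookkeeping with counits. When $A$ is finite-dimensional there is a purely conceptual alternative: identify $B^{\underline{\mathrm{op}}}\lbiprod A\rbiprod B^{*}$ as the Hopf algebra dual of the double bosonisation $B^{*\underline{\mathrm{cop}}}\lbiprod H\rbiprod B$ with $H=A^{*}$, whose quasitriangular structure $\CR^{\mathrm{new}}=\overline{\mathrm{exp}}\cdot\mathcal{R}$ from Theorem~\ref{Double Bosonisation} pairs back — via $\overline{\mathrm{exp}}=\sum f^{a}\otimes \underline{S}e_{a}$ and the factorised pairing — to exactly the stated $\mathcal{R}$; but since we want general coquasitriangular $A$, the direct verification is the route to take.
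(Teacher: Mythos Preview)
Your approach is genuinely different from the paper's. The paper does not factor $\mathcal{R}$ through the projections or reduce anything to generators: it verifies each of the three axioms by brute-force expansion on general elements $x\otimes k\otimes y$, $w\otimes\ell\otimes z$, $m\otimes j\otimes v$, manipulating both sides of (\ref{quabic}) (two separate computations) and of (\ref{quacom}) using the comodule-coalgebra identities (\ref{left-comod-coalg})--(\ref{right-comod-coalg}), the braided-antipode formulae (\ref{braidedS}), duality between $B$ and $B^*$, and repeated applications of (\ref{quabic})--(\ref{quacom}) for $A$, until the two sides visibly coincide. Convolution-invertibility is not treated. Your route is more structural and makes explicit why this $\mathcal{R}$ is exactly the coquasitriangular form dual to $\CR^{\rm new}$ in Theorem~\ref{Double Bosonisation}; it also cleanly isolates invertibility via the antipode.

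Two places in your outline carry more weight than you acknowledge, though neither is fatal. First, the paper records $\pi,\pi'$ only as \emph{algebra} surjections (the display after Theorem~\ref{codbos}); that they are coalgebra maps---and to which cobosonisation, since $B^{\underline{\mathrm{op}}}$ lives naturally in ${}^{\bar A}\CM$ so the relevant target is $B^{\underline{\mathrm{op}}}\lbiprod\bar A$---requires its own verification against the full coproduct formula. Second, the ``nine cases'' for (\ref{quacom}) are not as light as you suggest: even for a single generator $k\in A$ or $x\in B^{\underline{\mathrm{op}}}$ the co-double coproduct contains the sum $\sum_a f^a\otimes(e_a{}_{\underline{\o}}\dotop\,\cdots\,\dotop\bar S e_a{}_{\underline{\t}})$, so the cases $A$--$A$, $B^{\underline{\mathrm{op}}}$--$B^{\underline{\mathrm{op}}}$, $B^*$--$B^*$ and the mixed $A$--$B^{\underline{\mathrm{op}}}$, $A$--$B^*$ all require unwinding comparable to the paper's direct computation (the claim that ``$\mathcal{R}$-values collapse to counits'' applies to $\mathcal{R}$ evaluated on the subalgebras themselves, not to $\mathcal{R}(b_2,a_2)$ with the full coproduct). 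Your strategy works, but the savings over the paper's head-on verification are modest.
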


\begin{proof} (i) Expanding the definitions of the product and  the coquasitriangular structure,
\begin{align*}
\mathcal{R}\Big((m\otimes j &\otimes v), (x\otimes k \otimes y)(w \otimes \ell \otimes z)\Big)\\
=&\langle \underline{S}(y^{\bar{\z}\bar{\z}}z^{\bar{\z}}), m \rangle \mathcal{R}(j, k\ell{\o}y^{\bar{\z}\bar{\o}}z^{\bar{\o}})\mathcal{R}(y^{\bar{\o}}, \ell{\t})\\
=&\langle \underline{S}(y^{\bar{\z}}z^{\bar{\z}}),m\rangle \mathcal{R}(j,k\ell{\o}y^{\bar{\o}}{\o}z^{\bar{\z}}) \mathcal{R}(y^{\bar{\o}}{\t},\ell_{\t})\\
=&\langle \underline{S}z^{\bar{\z}\bar{\z}}\underline{S}y^{\bar{\z}\bar{\z}}, m \rangle \mathcal{R}(j,k\ell{\o}y^{\bar{\o}}{\o}z^{\bar{\z}}) \mathcal{R}(y^{\bar{\o}}{\t},\ell{\t}) \mathcal{R}(y^{\bar{\z}\bar{\o}}, z^{\bar{\z}\bar{\o}})\\
=&\langle \underline{S}z^{\bar{\z}}, m{\underline{\o}} \rangle \langle \underline{S}y^{\bar{\z}}, m{\underline{\t}} \rangle \mathcal{R}(j,k\ell{\o}y^{\bar{\o}}{\t}z^{\bar{\o}}{\t})\mathcal{R}(y^{\bar{\o}}{\th},\ell{\t})\mathcal{R}(y^{\bar{\o}}{\o}, z^{\bar{\o}}{\o})\\
=&\langle \underline{S}z^{\bar{\z}}, m{\underline{\o}} \rangle \langle \underline{S}y^{\bar{\z}}, m{\underline{\t}} \rangle \mathcal{R}(j, ky^{\bar{\o}}{\th}\ell{\t}z^{\bar{\o}}{\t})\mathcal{R}(y^{\bar{\o}}, z^{\bar{\o}}{\o})\mathcal{R}(y^{\bar{\o}}{\t}, \ell{\o}). 
\end{align*}
The second equality uses the right coaction on $y$. The third equality expands the braided-antipode $\underline{S}(y^{\bar{\z}}z^{\bar{\z}})$. The fourth equality uses the right-coaction on $y$ and $z$, and  evaluation. The last equality is quasicommutativity to change the order of product inside the first $\CR$. On the other side, 
\begin{align*}
\mathcal{R}\Big((m\otimes & j \otimes v){\o}, w \otimes \ell \otimes z\Big)\mathcal{R}\Big((m\otimes j \otimes v){\t}, x \otimes k \otimes y\Big)\\
=&\mathcal{R}(m{\underline{\o}}\otimes m{\underline{\t}}^{\bar{\o}}{\o}j{\o} \otimes f^{a}, w \otimes \ell \otimes z)\\
&\mathcal{R}(e_{a}\underline{\o}^{\bar{\infi}}\dotop m{\underline{\t}}^{\bar{\infi}}\dotop \bar{S}e_{a}\underline{\th}^{\bar{\infi}} \otimes j{\fo}v{\underline{\o}}^{\bar{\o}}{\t}\otimes v{\t}, x \otimes k \otimes y)\\
& \mathcal{R}(e_{a}\underline{\o}^{\bar{\o}}, m{\underline{\t}}^{\bar{\o}}{\t}j{\t})\mathcal{R}(S(j{\th}v{\underline{\o}}^{\bar{\o}}{\o}),e_{a}\underline{\th}^{\bar{\o}}) \langle v{\underline{\o}}^{\bar{\z}}, e_{a}\underline{\t} \rangle\\
=&\langle \underline{S}z^{\bar{\z}}, m{\underline{\o}} \rangle \langle \underline{S}y^{\bar{\z}}, m{\underline{\t}}^{\bar{\infi}} \rangle \mathcal{R}(m{\underline{\t}}^{\bar{\o}}j{\o}, \ell z^{\bar{\o}})\mathcal{R}(j{\t}, k y^{\bar{\o}})\\
=&\langle \underline{S}z^{\bar{\z}}, m{\underline{\o}} \rangle \langle \underline{S}y^{\bar{\z}\bar{\z}}, m{\underline{\t}} \rangle \mathcal{R}(y^{\bar{\z}\bar{\o}}j{\o}, \ell z^{\bar{\o}})\mathcal{R}(j{\t}, k y^{\bar{\o}})\\
=&\langle \underline{S}z^{\bar{\z}}, m{\underline{\o}} \rangle \langle \underline{S}y^{\bar{\z}}, m{\underline{\t}} \rangle\mathcal{R}(y^{\bar{\o}}{\o}j{\o}, \ell z^{\bar{\o}})\mathcal{R}(j{\t}, k y^{\bar{\o}}{\t}).
\end{align*}
The third equality uses $
\langle y^{\bar{\z}}, m{\underline{\t}}^{\bar{\infi}}\rangle m{\underline{\t}}^{\bar{\o}} = \langle y^{\bar{\z}\bar{\z}}, m{\underline{\t}} \rangle y^{\bar{\z}\bar{\o}}$ and the fourth uses the right coaction on $y$.  We can then use (\ref{quacom})  to gather the parts of $j$ and obtain the same expression as on the first side. (ii) Similarly expanding the definitions,
\begin{align*}
\mathcal{R}\Big((x\otimes & k \otimes y)(w\otimes \ell \otimes z), (m \otimes j \otimes v)\Big)\\
=&\langle \underline{S}v^{\bar{\z}}, x\dotop w^{\bar{\infi}} \rangle \mathcal{R}(k{\t}\ell, jv^{\bar{\o}})\mathcal{R}(Sk{\o}, w^{\bar{\o}})\\
=&\langle \underline{S}v^{\bar{\z}}, w^{\bar{\infi}\bar{\infi}}x^{\bar{\infi}} \rangle \mathcal{R}(k{\t}\ell,jv^{\bar{\o}})\mathcal{R}(Sk{\o},w^{\bar{\o}})\mathcal{R}(Sx^{\bar{\o}},w^{\bar{\infi}\bar{\o}})\\
=&\langle \underline{S}v^{\bar{\z}}{\underline{\o}}^{\bar{\z}}, x^{\bar{\infi}} \rangle \langle \underline{S}v^{\bar{\z}}{\underline{\t}}^{\bar{\z}}, w^{\bar{\infi}} \rangle \mathcal{R}(k{\t}\ell, jv^{\bar{\o}}) \mathcal{R}(Sk{\o}, w^{\bar{\o}}{\o})\\
&\mathcal{R}(Sx^{\bar{\o}}, w^{\bar{\o}}{\t}) \mathcal{R}(v^{\bar{\z}}{\underline{\o}}^{\bar{\o}},v^{\bar{\z}}{\underline{\t}}^{\bar{\o}})\\
=&\langle \underline{S}v{\underline{\o}}^{\bar{\z}}, x^{\bar{\infi}} \rangle \langle \underline{S}v{\underline{\t}}^{\bar{\z}}, w^{\bar{\infi}} \rangle \mathcal{R}(k{\t}\ell, jv{\underline{\o}}^{\bar{\o}}{\t}v{\underline{\t}}^{\bar{\o}}{\t})\mathcal{R}(Sk{\o}, w^{\bar{\o}}{\o})\\
&\mathcal{R}(Sx^{\bar{\o}}, w^{\bar{\o}}{\t})\mathcal{R}(v{\underline{\o}}^{\bar{\o}}{\o},v{\underline{\t}}^{\bar{\o}}{\o})\\
=&\langle \underline{S}v{\underline{\o}}^{\bar{\z}\bar{\z}}, x \rangle \langle \underline{S}v{\underline{\t}}^{\bar{\z}\bar{\z}}, w \rangle \mathcal{R}(k{\t}\ell, jv{\underline{\o}}^{\bar{\o}}{\t}v{\underline{\t}}^{\bar{\o}}{\t})\mathcal{R}(Sk{\o}, v{\underline{\t}}^{\bar{\z}\bar{\o}}{\o}) \\
&\mathcal{R}(Sv{\underline{\o}}^{\bar{\z}\bar{\o}}, v{\underline{\t}}^{\bar{\z}\bar{\o}}{\t})\mathcal{R}(v{\underline{\o}}^{\bar{\o}}{\o},v{\underline{\t}}^{\bar{\o}}{\o})\\
=&\langle \underline{S}v{\underline{\o}}^{\bar{\z}}, x \rangle \langle \underline{S}v{\underline{\t}}^{\bar{\z}}, w \rangle\mathcal{R}(k{\t}\ell, jv{\underline{\o}}^{\bar{\o}}{\th}v{\underline{\t}}^{\bar{\o}}{\fo}) \mathcal{R}(Sk{\o}, v{\underline{\t}}^{\bar{\o}}{\o})\\
&\mathcal{R}(Sv{\underline{\o}}^{\bar{\o}}{\o}, v{\underline{\t}}^{\bar{\o}}{\t})\mathcal{R}(v{\underline{\o}}^{\bar{\o}}{\t}, v{\underline{\t}}^{\bar{\o}}{\th})\\
=&\langle \underline{S}v{\underline{\o}}^{\bar{\z}},x \rangle \langle \underline{S}v{\underline{\t}}^{\bar{\z}}, w \rangle \mathcal{R}(k, j{\o}v{\underline{\o}}^{\bar{\o}}{\o})\mathcal{R}(\ell, j{\t}v{\underline{\o}}^{\bar{\o}}{\t}v{\underline{\t}}^{\bar{\o}}). 
\end{align*}
The second equality expands the braided-product $\dotop$. The third equality uses the left-coaction on $w$, followed by the duality pairing and taking $\underline S$ to the left in $\underline{\Delta}(\underline{S}v^{\bar{\z}})$. The fourth equality uses the  comodule coalgebra property (\ref{right-comod-coalg})  on $v$ and the right coaction axioms. The fifth equality  moves the coactions  onto $x,w$ by duality. The sixth equality is similar to the fourth. For the last equality we cancel the last two $\mathcal{R}$s and use (\ref{quabic}) to gather $k$ inside $\CR$ and cancel further. On the other side,
\begin{align*}
\mathcal{R}\Big((x&\otimes k \otimes y), (m\otimes j \otimes v){\o}\Big)\mathcal{R}\Big((w \otimes \ell \otimes z),(m\otimes j \otimes v){\t}  \Big)\\
=& \langle \underline{S}f^{a\bar{\z}}, x \rangle \langle \underline{S}v{\underline{\t}}^{\bar{\z}}, w \rangle \langle v{\underline{\o}}^{\bar{\z}}, e_{a} \rangle \mathcal{R}(k, j{\underline{\o}}f^{a\underline{\o}}) \mathcal{R}(\ell, j{\t}v{\underline{\o}}^{\bar{\o}}v{\underline{\t}}^{\bar{\o}})\\
=&\langle \underline{S}v{\underline{\o}}^{\bar{\z}\bar{\z}}, x \rangle \langle \underline{S}v{\underline{\t}}^{\bar{\z}}, w \rangle \mathcal{R}(k,j{\o}v{\underline{\o}}^{\bar{\z}\bar{\o}}) \mathcal{R}(\ell, j{\t}v{\underline{\o}}^{\bar{\o}}v{\underline{\t}}^{\bar{\o}})
\end{align*}
on substituting $f^{a}=v{\underline{\o}}^{\bar{\z}}$. We can then use the right coaction property on $v{\underline{\o}}$ to recover the result of our first calculation. (iii) We expand the definitions to compute
\begin{align*}
(x\otimes& k \otimes y){\t}(w \otimes \ell \otimes z){\t} \mathcal{R}\Big((x \otimes k \otimes y){\o},(w \otimes \ell \otimes z){\t}\Big)\\
=& x{\underline{\t}}^{\bar{\infi}}\dotop e_{b}\underline{\o}^{\bar{\infi}}\dotop w^{\bar{\infi}}\dotop \bar{S}e_{b}\underline{\th}^{\bar{\infi}} \otimes k{\th}\ell{\fo}z{\underline{\o}}^{\bar{\o}}{\t} \otimes y^{\bar{\z}} z{\underline{\t}}\\
&\mathcal{R}(Sk{\t}, e_{b}\underline{\o}^{\bar{\o}}{\t}w^{\bar{\o}}{\th}e_{b}\underline{\th}^{\bar{\o}}{\t}) \mathcal{R}(y^{\bar{\o}}, \ell{\fiv}z{\underline{\o}}^{\bar{\o}}{\th})\mathcal{R}(e_{b}\underline{\o}^{\bar{\o}}{\o}, w^{\bar{\o}}{\t}\ell{\t})\\
&\mathcal{R}(S(\ell{\th}z{\underline{\o}}^{\bar{\o}}{\o}), e_{b}\underline{\th}^{\bar{\o}}{\o}) \mathcal{R}(x{\underline{\t}}^{\bar{\o}}k{\o}, w^{\bar{\o}}{\o}\ell{\o}f^{b\bar{\o}}) \langle \underline{S}f^{b\bar{\z}}, x{\underline{\o}} \rangle \langle z{\underline{\o}}^{\bar{\z}}, e_{b}\underline{\t} \rangle\\
=&x{\underline{\fo}}^{\bar{\infi}}\dotop \bar{S}^{-1}x{\underline{\th}}^{\bar{\infi}}\dotop w^{\bar{\infi}} \dotop \bar{S}\bar{S}^{-1}x{\underline{\o}}^{\bar{\infi}}\otimes k{\underline{\th}}\ell{\fo}z{\underline{\o}}^{\bar{\o}}{\t}\otimes y^{\bar{\z}}z{\underline{\t}}\\
&\mathcal{R}(Sk{\t}, x{\th}^{\bar{\o}}{\fiv}w^{\bar{\o}}{\th}x{\underline{\o}}^{\bar{\o}}{\fo})\mathcal{R}(y^{\bar{\o}}, \ell{\fiv}z{\underline{\o}}^{\bar{\o}}{\th})\mathcal{R}(x{\underline{\th}}^{\bar{\o}}{\fo}, w^{\bar{\o}}{\t}\ell{\t})\\
&\mathcal{R}(S(\ell{\th}z{\underline{\o}}^{\bar{\o}}{\o}), x{\underline{\o}}^{\bar{\o}}{\th})\mathcal{R}(x{\underline{\fo}}^{\bar{\o}}k{\o}, w^{\bar{\o}}{\o}\ell{\o}x{\underline{\o}}^{\bar{\o}}{\o}x{\underline{\t}}^{\bar{\o}}{\o}x{\underline{\th}}^{\bar{\o}}{\o})\\
&\mathcal{R}(x{\underline{\t}}^{\bar{\o}}{\t}x{\underline{\th}}^{\bar{\o}}{\t}, x{\underline{\o}}^{\bar{\o}}{\t})\mathcal{R}(x{\underline{\th}}^{\bar{\o}}{\th}, x{\underline{\t}}^{\bar{\o}}{\th}) \langle z{\underline{\o}}^{\bar{\z}}, \bar{S}^{-1} x{\underline{\t}}^{\bar{\infi}} \rangle\\
=&w^{\bar{\infi}}\dotop x{\underline{\o}}^{\bar{\infi}}\otimes k{\th}\ell{\th}z{\underline{\o}}^{\bar{\o}}{\t}\otimes y^{\bar{\z}}z{\underline{\t}} \\
&\mathcal{R}(Sk{\t}, w^{\bar{\o}}{\t}x{\underline{\o}}^{\bar{\o}}{\fo})\mathcal{R}(y^{\bar{\o}}, \ell{\fo}z{\underline{\o}}^{\bar{\o}}{\th})\mathcal{R}(S(\ell{\t}z{\underline{\o}}^{\bar{\o}}{\o}), x{\underline{\o}}^{\bar{\o}}{\th})\\
&\mathcal{R}(k{\o},w^{\bar{\o}}{\o}\ell{\o}x{\underline{\o}}^{\bar{\o}}{\o}x{\underline{\t}}^{\bar{\o}}{\o})\mathcal{R}(x{\underline{\t}}^{\bar{\o}}{\t}, x{\underline{\o}}^{\bar{\o}}{\t}) \langle z{\underline{\o}}^{\bar{\z}}, \bar{S}^{-1}x{\underline{\t}}^{\bar{\infi}} \rangle\\
=&w^{\bar{\infi}}\dotop x{\underline{\o}}^{\bar{\infi}}\otimes k{\th}\ell{\th}z{\underline{\o}}^{\bar{\o}}{\t}\otimes y^{\bar{\z}}z{\underline{\t}}\\
&\mathcal{R}(Sk{\t}, w^{\bar{\o}}{\t}x{\underline{\o}}^{\bar{\o}}{\fo})\mathcal{R}(y^{\bar{\o}}, \ell{\fo}z{\underline{\o}}^{\bar{\o}}{\th})\mathcal{R}(S(\ell{\t}z{\underline{\o}}^{\bar{\o}}{\o})x{\underline{\o}}^{\bar{\o}}{\th})\\
&\mathcal{R}(k{\o}, w^{\bar{\o}}{\o}\ell{\o}x{\underline{\o}}^{\bar{\o}}{\o}z{\underline{\o}}^{\bar{\z}\bar{\o}}{\o})\mathcal{R}(z{\underline{\o}}^{\bar{\z}\bar{\o}}{\t}, x{\underline{\o}}^{\bar{\o}}{\t})\langle \underline{S}z{\underline{\o}}^{\bar{\z}}, x{\underline{\t}} \rangle\\
=&w\dotop x{\underline{\o}}^{\bar{\infi}}\otimes k{\t}\ell{\th}z{\underline{\o}}^{\bar{\o}}{\t}\otimes y^{\bar{\z}}z{\underline{\t}} \langle \underline{S}z{\underline{\o}}^{\bar{\z}}, x{\underline{\t}} \rangle\\
&\mathcal{R}(Sk{\t}, x{\underline{\o}}^{\bar{\o}}{\th})\mathcal{R}(y^{\bar{\o}},\ell{\fo}z{\underline{\o}}^{\bar{\o}}{\th})\mathcal{R}(S\ell{\t}, x{\underline{\o}}^{\bar{\o}}{\t}) \mathcal{R}(k{\o}, \ell{\o}x{\underline{\o}}^{\bar{\o}}{\o}z{\underline{\o}}^{\bar{\o}}{\o})\\
=&w\dotop x{\underline{\o}}^{\bar{\infi}}\otimes k{\t}\ell{\th}z{\underline{\o}}^{\bar{\o}}{\t}\otimes y^{\bar{\z}}z{\underline{\t}}\ \langle \underline{S}z{\underline{\o}}^{\bar{\z}}, x{\underline{\t}} \rangle\\
&\mathcal{R}(y^{\bar{\o}}, \ell{\fo}z{\underline{\o}}^{\bar{\o}}{\th})\mathcal{R}(S\ell{\o}, x{\underline{\o}}^{\bar{\o}})\mathcal{R}(k{\o}, \ell{\t}z{\underline{\o}}^{\bar{\o}}{\o})\\
=&w\dotop x{\underline{\o}}^{\bar{\infi}}\otimes \ell{\t}z{\underline{\o}}^{\bar{\o}}{\o}k{\o} \otimes y^{\bar{\z}}z \ \langle \underline{S}z{\underline{\o}}^{\bar{\z}}, x{\underline{\t}} \rangle\\
&\mathcal{R}(y^{\bar{\o}}, \ell{\fo}z{\underline{\o}}^{\bar{\o}}{\th})\mathcal{R}(S\ell{\o},x{\underline{\o}}^{\bar{\o}})\mathcal{R}(k{\t}, \ell{\th}z{\underline{\o}}^{\bar{\o}}{\t}). 
\end{align*}
The second equality uses the duality $\langle \underline{S}f^{b\bar{\z}}, x_{\underline{\o}} \rangle f^{b\bar{\o}} = \langle f^{b}, \bar{S}^{-1}x_{\underline{\o}}^{~~\bar{\infi}} \rangle x_{\underline{\o}}^{~~\bar{\o}}$ to substitute $e_{b}=\bar{S}^{-1}x_{\underline{\o}}^{~~\bar{\infi}}$ in all the places where it occurs, and the comodule algebra property (\ref{left-comod-coalg}). The third equality cancels $(x_{\underline{\fo}}\dotop \bar{S}^{-1}x_{\underline{\th}})^{\bar{\infi}}$ resulting in trivial coactions.  We use the duality $
\langle z{\underline{\o}}^{\bar{\z}}, \bar{S}^{-1}x{\underline{\t}}^{\bar{\infi}} \rangle x{\underline{\t}}^{\bar{\o}} = \langle \underline{S}z{\underline{\o}}^{\bar{\z}}, x{\underline{\t}} \rangle z{\underline{\o}}^{\bar{\z}\bar{\o}}$ for the fourth equality and gather  $w^{\bar{\o}}$ inside $\mathcal{R}$ to cancel it for the fifth. The sixth equality uses (\ref{quacom}) in   \[ \ell{\o}x{\underline{\o}}^{\bar{\o}}{\o}\bar{\mathcal{R}}(x{\underline{\o}}^{\bar{\o}}{\t},\ell{\t})= x{\underline{\o}}^{\bar{\o}}{\t}\ell{\t}\bar{\mathcal{R}}(x{\underline{\o}}^{\bar{\o}}{\o}, \ell{\t})\] 
and then gathers the parts of $x{\underline{\o}}^{\bar{\o}}$, and cancels some $\CR$s. We finally use (\ref{quacom}) to change the order of products in the third tensor factor. On the other side, 
\begin{align*}
(w&\otimes \ell \otimes z){\o}(x\otimes k \otimes y){\o}\mathcal{R}\Big((x\otimes k \otimes y){\t},(w\otimes \ell \otimes z){\t}\Big)\\
=&w\dotop x{\underline{\o}}^{\bar{\infi}}\otimes \ell{\t}x{\underline{\t}}^{\bar{\o}}{\o}k{\o}\otimes f^{b\bar{\z}}f^{a} \ \langle \underline{S}z{\underline{\t}}^{\bar{\z}}, e_{a}\underline{\o}^{\bar{\infi}}\dotop x{\underline{\t}}^{\bar{\infi}}\dotop \bar{S}e_{a}\underline{\th}^{\bar{\infi}} \rangle\\
&\mathcal{R}(k{\fiv}y^{\bar{\o}}{\t}, \ell{\th}z{\underline{\o}}^{\bar{\o}}z{\underline{\t}}^{\bar{\o}})\mathcal{R}(S\ell{\o}, x{\underline{\o}}^{\bar{\o}}) \mathcal{R}(f^{b\bar{\o}}, x{\underline{\t}}^{\bar{\o}}{\t}k{\t})\mathcal{R}(e_{a}\underline{\o}^{\bar{\o}}, x{\underline{\t}}^{\bar{\o}}{\th}k{\th})\\
&\mathcal{R}(S(k{\fo}y^{\bar{\o}}{\bar{\o}}),e_{a}\underline{\th}^{\bar{\o}}) \ \langle y^{\bar{\z}},e_{a}\underline{\t} \rangle \langle z{\underline{\o}}^{\bar{\z}}, e_{b} \rangle\\
=&w\dotop x{\underline{\o}}^{\bar{\infi}} \otimes \ell{\t}x{\underline{\t}}^{\bar{\o}}{\o}k{\o}\otimes z{\underline{\o}}^{\bar{\z}}f^{a} \ \langle \underline{S}z{\underline{\t}}^{\bar{\z}}, (\underline{S}^{-1}e_{a}\underline{\th}^{\bar{\infi}})x{\underline{\t}}^{\bar{\infi}}e_{a}\underline{\o}^{\bar{\infi}} \rangle\\
&\mathcal{R}(k{\fiv}y^{\bar{\o}}{\t}, \ell{\th}z{\underline{\o}}^{\bar{\o}}{\t}z{\underline{\t}}^{\bar{\o}})\mathcal{R}(S\ell{\o},x{\underline{\o}}^{\bar{\o}})\mathcal{R}(z{\underline{\o}}^{\bar{\o}}{\o}, x{\underline{\t}}^{\bar{\o}}{\t}k{\t})\\
&\mathcal{R}(e_{a}\underline{\o}^{\bar{\o}}{\o}, x{\underline{\t}}^{\bar{\o}}{\th}k{\th})\mathcal{R}(S(k{\fo} y^{\bar{\o}}{\o}),e_{a}\underline{\th}^{\bar{\o}}{\o})\mathcal{R}(Se_{a}\underline{\o}^{\bar{\o}}{\t},x{\underline{\t}}^{\bar{\o}}{\fo})\\
&\mathcal{R}(S(x{\underline{\t}}^{\bar{\o}}{\fiv}e_{a}\underline{\o}^{\bar{\o}}{\th})e_{a}\underline{\th}^{\bar{\o}}{\t}) \ \langle y^{\bar{\z}}, e_{a}\underline{\t} \rangle\\
=&w\dotop x{\underline{\o}}^{\bar{\infi}}\otimes \ell{\t}x{\underline{\t}}^{\bar{\o}}{\o}k{\o}\otimes z{\underline{\o}}^{\bar{\z}}f^{a} \ \mathcal{R}(S\ell{\o}, x{\underline{\o}}^{\bar{\o}}) \mathcal{R}(z{\underline{\o}}^{\bar{\o}}{\o}, x{\underline{\t}}^{\bar{\o}}{\t}k{\t}) \\
&\mathcal{R}(z{\underline{\t}}^{\bar{\o}}{\o}, x{\underline{\t}}^{\bar{\o}}{\th}k{\th}) \mathcal{R}(S(k{\fo} y^{\bar{\o}}{\o}), z{\underline{\o}}^{\bar{\o}}{\o})\mathcal{R}(z{\underline{\o\t}}^{\bar{\o}}{\t},x{\underline{\t}}^{\bar{\o}}{\fo})\\
&\mathcal{R}(S(x{\underline{\t}}^{\bar{\o}}{\fiv}z{\underline{\t}}^{\bar{\o}}{\th}),z{\underline{\fo}}^{\bar{\o}}{\t}) \mathcal{R}(k{\fiv}y^{\bar{\o}}{\t}, \ell{\th}z{\underline{\o}}^{\bar{\o}}{\t}z{\underline{\t}}^{\bar{\o}}{\fiv}z{\underline{\th}}^{\bar{\o}}{\th}z{\underline{\fo}}^{\bar{\o}}{\fiv})\\
&\mathcal{R}(z{\underline{\th}}^{\bar{\o}}{\o}, z{\underline{\fo}}^{\bar{\o}}{\th}) \mathcal{R}(z{\underline{\t}}^{\bar{\o}}{\fo}z{\underline{\fo}}^{\bar{\o}}{\fo}) \langle \underline{S}z{\underline{\t}}^{\bar{\z}}, e_{a}\underline{\o}^{\bar{\infi}} \rangle \langle y^{\bar{\z}},e_{a}\underline{\t} \rangle \langle \underline{S}z{\underline{\fo}}^{\bar{\z}},\underline{S}^{-1}e_{a}\underline{\th}^{\bar{\infi}} \rangle \\
&\langle \underline{S}z{\underline{\th}}^{\bar{\z}}, x{\underline{\t}}^{\bar{\infi}} \rangle\\
=&w\dotop x{\underline{\o}}^{\bar{\infi}}\otimes \ell{\t}x{\underline{\t}}^{\bar{\o}}{\o}k_{\o} \otimes y^{\bar{\z}}z{\underline{\t}}^{\bar{\z}} \ \mathcal{R}(S\ell{\o}, x{\underline{\o}}^{\bar{\o}}) \mathcal{R}(S(k{\underline{\t}}y^{\bar{\o}}{\o}),z{\underline{\t}}^{\bar{\o}}{\o})\\
&\mathcal{R}(Sx{\underline{\t}}^{\bar{\o}}{\t}, z{\underline{\t}}^{\bar{\o}}{\t}) \mathcal{R}(k{\th}y^{\bar{\o}}{\t}z{\underline{\o}}^{\bar{\o}}{\t}z{\underline{\t}}^{\bar{\o}}{\fo})\mathcal{R}(z{\underline{\o}}^{\bar{\o}}{\o},z{\underline{\t}}^{\bar{\o}}{\th}) \langle \underline{S}z{\underline{\o}}^{\bar{\z}}, x{\underline{\t}}^{\bar{\infi}} \rangle\\
=&w \dotop x{\underline{\o}}^{\bar{\infi}}\otimes \ell{\t}z{\underline{\o}}^{\bar{\o}}{\o}k{\o} \otimes y^{\bar{\z}}z{\underline{\t}}^{\bar{\z}} \ \mathcal{R}(S\ell{\o},x{\underline{\o}}^{\bar{\o}}{\o})\mathcal{R}(S(k{\t}y^{\bar{\o}}{\o}),z{\underline{\t}}^{\bar{\o}}{\o})\\
&\mathcal{R}(k{\th}y^{\bar{\o}}{\t}, \ell{\th}z{\underline{\o}}^{\bar{\o}}{\fo}z{\underline{\t}}^{\bar{\o}}{\fo}) \mathcal{R}(Sz{\underline{\o}}^{\bar{\o}}{\t}, z{\underline{\t}}^{\bar{\o}}{\t})\mathcal{R}(z{\underline{\o}}^{\bar{\o}}{\th}, z{\underline{\t}}^{\bar{\o}}{\th}) \langle \underline{S}z{\underline{\o}}^{\bar{\z}}, x{\underline{\o}}^{\bar{\o}} \rangle.
\end{align*}
The second equality uses $\langle z{\underline{\o}}^{\bar{\z}}, e_{b} \rangle$ to substitute $f^{b}=z{\underline{\o}}^{\bar{\z}}$ and we then expand $\dotop$ inside the pairing. For the third equality, we use 
\begin{align*}
\langle \underline{S}z{\underline{\t}}^{\bar{\z}}&, (\underline{S}^{-1}e_{a}\underline{\th}^{\bar{\infi}})x{\underline{\t}}^{\bar{\infi}}e_{a}\underline{\o}^{\bar{\infi}} \rangle\\
=&\langle (\underline{S}z{\underline{\t}}^{\bar{\z}}){\underline{\o}}, \underline{S}^{-1}e_{a}\underline{\th}^{\bar{\infi}} \rangle \langle (\underline{S}z{\underline{\t}}^{\bar{\z}}){\underline{\t}}, x{\underline{\t}}^{\bar{\infi}} \rangle  \langle (\underline{S}z{\underline{\t}}^{\bar{\z}})_{\underline{\th}}, e_{a}\underline{\o}^{\bar{\infi}} \rangle
\end{align*}
and move $\underline S$ to the left in $\underline{\Delta}^{2}(\underline{S}z{\underline{\t}}^{\bar{\z}})$. For the fourth equality we gather the coproducts of $e_{a}$ to give $\langle (\underline{S}z{\underline{\t}}^{\bar{\z}})y^{\bar{\z}}z{\underline{\fo}}^{\bar{\z}},e_{a} \rangle$ so that we can set $f^{a}= (\underline{S}z{\underline{\t}}^{\bar{\z}})y^{\bar{\z}}z{\underline{\fo}}^{\bar{\z}}$, allowing us to cancel $(z{\underline{\o}}\underline{S}z{\underline{\t}})^{\bar{\z}}$ and drop out following coactions. For the fifth equality, we use the duality pairing $
\langle \underline{S}z{\underline{\o}}^{\bar{\z}}, x{\underline{\t}}^{\bar{\infi}} \rangle x{\underline{\t}}^{\bar{\o}} = \langle \underline{S}z{\underline{\o}}^{\bar{\z}\bar{\z}}, x{\underline{\t}} \rangle z{\underline{\o}}^{\bar{\z}\bar{\o}}$ and then gather  $z{\underline{\t}}^{\bar{\o}}$ inside $\mathcal{R}$ so as to cancel it and recover the result of our first calculation.
\end{proof}

\section{Construction of $\cc_{q}[SL_{2}]$ by co-double bosonisation} \label{Sec4}

The coquasitriangular Hopf algebra $\mathbb{C}_{q}[SL_{2}]$ in some standard conventions is generated by $a,b,c,d$ with the relations, 
\[ba=qab, \quad ca=qac, \quad db=qbd, \quad dc=qcd, \quad cb=bc,\]
\[ad-q^{-1}bc=1, \quad da-ad=(q-q^{-1})bc,\]
a `matrix' form of coproduct (so $\Delta a=a\tens a+b\tens c$ etc.), $\eps(a)=\eps(d)=1$, $\eps(b)=\eps(c)=0$ and antipode $Sa=d, Sd=a, Sb=-qb, Sc=-q^{-1}$. The reduced version $c_{q}[SL_{2}]$ has 
\[ a^{n}=1=d^{n},\quad  b^{n}=0=c^{n}\] as additional relations when $q$ is a primitive $n$-th root of unity. We will show how some version of this
is obtained by co-double bosonisation.  Let $A=\mathbb{C}_{q}[t]/(t^{n}-1)$ be a coquasitriangular Hopf algebra with $t$ grouplike and $\mathcal{R}(t^{r},t^{s})= q^{rs}$. Also let $B=\mathbb{C}[X]/(X^{n})$ be a braided group in ${}^{A}\mathcal{M}$ with 
\[\Delta_{L} X= t \otimes X, \quad \underline{\Delta}X=1\otimes X + X \otimes 1, \quad \underline{\epsilon}X=0, \quad \underline{S}X=-X,\quad \Psi(X^{r}\tens X^{s})=q^{rs}X^{s}\otimes X^{r}.\]
The dual $B^{*}=\mathbb{C}[Y]/(Y^{n})$ lives in $\mathcal{M}^{A}$ with the same form of coproduct, etc., as for $B$, but with right-action $\Delta_R Y= Y\otimes t $. We choose pairing $\langle X, Y \rangle =1$ and take a basis of $B$ and a dual basis of $B^*$ respectively as
\[ \{e_{a}\}=\{X^a\}_{0\leq a < n},\quad \{f^{a}\}=\Big\{\dfrac{Y^a}{[a]_q!}\Big\}_{0\leq a < n},\]
where $[a]_q$ is a $q$-integers defined by $[a]_{q}=(1-q^{a})/(1-q)$ and $[a]_{q}!=[a]_{q}[a-1]_{q}\cdots [1]_{q}!$ with $[0]_{q}!=1$. We also write $\left[ a \atop r\right]_{q}=\frac{[a]_{q}!}{[r]_{q}![a-r]_{q}!}$. We write $X^{a(\underline{\mathrm{op}})} = X\dotop X \dotop \cdots \dotop X$ with $a$-many $X$, and find inductively that
\begin{align}\label{Xop}
X^{a}=&q^{\frac{a(a-1)}{2}}X^{a(\underline{\mathrm{op}})}, \quad \bar{S}(X^{a})=(-1)^{a}X^{a(\underline{\mathrm{op}})}.
\end{align}

\begin{theorem}\label{cdbthm} Let $q$ be a primitive $n$-th root of unity and $A,B,B^{*}$ be as above. 
\begin{enumerate}
\item  The co-double bosonisation of $B$, denoted $\cc_q[SL_2]$,  has generators $X,t,Y$ and 
\begin{align*}
&X^{n}=Y^{n}=0, \quad t^{n}=1, \quad YX=XY,\quad Xt=qtX,\quad Yt=qtY,\\
\Delta t&= q\sum\limits_{a=0}^{n-2}(q-1)^{a-1}(1-q^{-a-1}) tY^{a}\otimes X^{a}t\\
&={t\over q-1}\left(q{1-((q-1)Y\otimes X)^{n-1}\over 1-(q-1)Y\otimes X}- {1-((1-q^{-1})Y\otimes X)^{n-1}\over 1-(1-q^{-1})Y\otimes X} \right)t, \\
\Delta X&=X\otimes 1 + \sum\limits_{a=0}^{n-2} (q-1)^a tY^{a}\otimes X^{a+1}=X\otimes 1+t\left({1-((q-1)Y\otimes X)^{n-1}\over1-(q-1)Y\otimes X}\right)X,\\
\Delta Y&=1\otimes Y +\sum\limits_{a=0}^{n-2}(1-q^{-1})^a Y^{a+1}\otimes X^{a}t=1\otimes Y+Y\left({1-((1-q^{-1})Y\otimes X)^{n-1}\over1-(1-q^{-1})Y\otimes X}\right) t. \end{align*}
\item If $n=2m+1$, there is an isomorphism $\phi : \cc_q[SL_2] \to c_{q^{-m}}[SL_2]$ defined by
\[ \phi(X)= bd^{-1},\quad \phi(t)=d^{-2}, \quad \phi(Y)={d^{-1}c\over q^m-q^{-m}}.\]
\end{enumerate}
\end{theorem}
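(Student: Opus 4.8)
The plan is to get part~(1) by specialising Theorem~\ref{codbos} to $A=\mathbb{C}_q[t]/(t^n-1)$, $B=\mathbb{C}[X]/(X^n)$, $B^*=\mathbb{C}[Y]/(Y^n)$, and to prove part~(2) by verifying that $\phi$ is a bijective Hopf algebra homomorphism. For part~(1), identify $X=X\otimes1\otimes1$, $t=1\otimes t\otimes1$, $Y=1\otimes1\otimes Y$. The cross relations drop out of the product formula of Theorem~\ref{codbos} using only $\Delta_LX^r=t^r\otimes X^r$, $\Delta_RY^r=Y^r\otimes t^r$, $\mathcal{R}(t^r,t^s)=q^{rs}$ and $St=t^{-1}$: one gets $Xt=qtX$, $Yt=qtY$, $YX=XY$ immediately, while $X^n=Y^n=0$ and $t^n=1$ already hold in the subalgebras $B^{\underline{\mathrm{op}}}$, $B^*$, $A$ (using \eqref{Xop} to relate $\dotop$-powers of $X$ to ordinary powers). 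As the cross relations reorder any word, $\{X^it^jY^k:0\le i,j,k<n\}$ spans $\cc_q[SL_2]$ and is a basis by the triangular decomposition built into Theorem~\ref{codbos}; hence the listed relations are a presentation.

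For the coproducts I substitute $X$, $t$, $Y$ into the coproduct formula of Theorem~\ref{codbos}, feeding in the braided binomial expansion $\underline\Delta X^a=\sum_r\left[a\atop r\right]_qX^r\otimes X^{a-r}$ (and its iterate), the grouplikeness of $t$, the explicit coactions, and \eqref{Xop} for $\bar S$ and $\dotop$-powers. In each of the three computations the pairing $\langle y{\underline{\o}}^{\bar{\z}},e_{a}{\underline{\t}}\rangle$, the trivial factors $\mathcal{R}(\cdot,1)=\epsilon=\mathcal{R}(1,\cdot)$ and the braided antipode identity $x{\underline{\o}}\dotop\bar Sx{\underline{\t}}=\underline\epsilon(x)$ collapse the sum over a basis of $B$ to a single $q$-binomial series of the shape $\sum_s(-1)^s\left[a\atop s\right]_qq^{s(s-1)/2}z^s=\prod_{j=0}^{a-1}(1-q^jz)$; evaluating it (products $\prod_\ell(1-q^{-\ell})$ appear) and rewriting via $f^a=Y^a/[a]_q!$ and $[a]_{q^{-1}}!/[a]_q!=q^{-a(a-1)/2}$ reproduces exactly the coefficients $(q-1)^a$, $(1-q^{-1})^a$ and $q(q-1)^{a-1}(1-q^{-a-1})$. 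The closed ``geometric series'' forms are then purely formal, since $(Y\otimes X)^n=Y^n\otimes X^n=0$ makes each $\sum_{a=0}^{n-2}w^a$ equal to $(1-w^{n-1})(1-w)^{-1}$, and $X^at=q^atX^a$ regroups the result.

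For part~(2), write $p=q^{-m}$ as in the introduction; then $p$ is again a primitive $n$-th root of unity (since $\gcd(m,2m+1)=1$), $p^2=q^{-2m}=q^{-(n-1)}=q$, and $d$ is invertible in $c_p[SL_2]$ because $d^n=1$. I would proceed in four steps. \textbf{(i)} $\phi$ is a well-defined algebra map: inserting $\phi(X)=bd^{-1}$, $\phi(t)=d^{-2}$, $\phi(Y)=(q^m-q^{-m})^{-1}d^{-1}c$ and using $d^{-1}b=p^{-1}bd^{-1}$, $d^{-1}c=p^{-1}cd^{-1}$, $cb=bc$ gives $\phi(X)\phi(t)=p^2\phi(t)\phi(X)$, $\phi(Y)\phi(t)=p^2\phi(t)\phi(Y)$, $\phi(X)\phi(Y)=\phi(Y)\phi(X)$, which become $Xt=qtX$, $Yt=qtY$, $YX=XY$ by $p^2=q$; also $\phi(X)^n=b^nd^{-n}=0=\phi(Y)^n$ and $\phi(t)^n=d^{-2n}=1$. \textbf{(ii)} $\phi$ is surjective: $d^{-2}$ generates $d$ (as $2$ is invertible mod odd $n$), then $b=\phi(X)d$, $c=(q^m-q^{-m})d\phi(Y)$, and $a=(1+p^{-1}bc)d^{-1}$ from $ad-p^{-1}bc=1$. \textbf{(iii)} Since $\dim\cc_q[SL_2]=n^3=\dim c_p[SL_2]$, $\phi$ is a linear, hence algebra, isomorphism. \textbf{(iv)} $\phi$ is a coalgebra map: as $\Delta$ is multiplicative on both sides and $X,t,Y$ generate, it suffices to check $\Delta\phi(X)=(\phi\otimes\phi)\Delta X$, $\Delta\phi(t)=(\phi\otimes\phi)\Delta t$, $\Delta\phi(Y)=(\phi\otimes\phi)\Delta Y$; for this expand $\Delta(d^{-1})=(d\otimes d+c\otimes b)^{-1}$ as the truncated geometric series $\sum_k(-1)^kd^{-1}(cd^{-1})^k\otimes d^{-1}(bd^{-1})^k$ (finite since $cd^{-1}\otimes bd^{-1}$ is nilpotent), likewise $\Delta(d^{-2})$, multiply by $\Delta b=a\otimes b+b\otimes d$ and $\Delta c=c\otimes a+d\otimes c$, and simplify with the $p$-commutation relations; using $p^2=q$ the resulting truncated series match the $\phi\otimes\phi$-images of the part~(1) formulas. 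Counit and antipode are then automatically intertwined, so $\phi$ is a Hopf algebra isomorphism.

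The main obstacle is the coproduct comparison in step~(iv), together with the parallel resummation in part~(1): both require keeping careful track of the distinction between $\cc_q[SL_2]$-powers of $X$ (which on $B^{\underline{\mathrm{op}}}$ are $\dotop$-powers, with $X^{a\,(\underline{\mathrm{op}})}=q^{-a(a-1)/2}X^a$) and ordinary powers, of $q$-integers versus $p$-integers, and of the many $q$-binomial coefficients, so that the stray powers of $q$ collapse to the clean stated coefficients. The purely algebraic parts — cross relations, surjectivity, dimension count — are short and mechanical by comparison.
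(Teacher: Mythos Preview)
Your proposal is correct. Part~(1) is essentially the paper's approach: specialise the product and coproduct formulas of Theorem~\ref{codbos}, use \eqref{Xop} for $\dotop$-powers and $\bar S$, and collapse the inner sum via a $q$-binomial identity. The paper records this identity explicitly as \eqref{q-identity1}, whereas you invoke the general product form $\sum_s(-1)^s\left[{a\atop s}\right]_qq^{s(s-1)/2}z^s=\prod_{j=0}^{a-1}(1-q^jz)$; these are equivalent, and indeed the sign bookkeeping $(-1)^{a-r}=(-1)^a(-1)^r$ turns the paper's $(1-q)^a$ into the stated $(q-1)^a$.

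For part~(2) you take a genuinely different route. The paper writes down the inverse map $\varphi:c_{q^{-m}}[SL_2]\to\cc_q[SL_2]$ explicitly on $a,b,c,d$ (so $\varphi(d)=t^m$, etc.), checks it is an algebra map, and then verifies the coalgebra property by computing $\Delta t^m$ inside $\cc_q[SL_2]$ using the formulas just established in part~(1); since $a,b,c,d$ have matrix coproduct, the remaining three generators then follow from this one computation. You instead work with $\phi$ directly, expanding $\Delta(d^{-1})$, $\Delta(d^{-2})$ in $c_p[SL_2]$ as truncated geometric series in the nilpotent element $cd^{-1}\otimes bd^{-1}$. Both arguments are valid; the paper's has the advantage that a single power $\Delta t^m$ controls everything and the target side is trivial, while yours has the advantage of not needing to guess the inverse map but at the cost of three separate series manipulations in $c_p[SL_2]$. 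Your surjectivity-plus-dimension argument for bijectivity is equivalent to exhibiting $\varphi$ explicitly.
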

\begin{proof}
(1) First we determine the products
\begin{align*}
(1\otimes 1 \otimes Y)(X \otimes 1 \otimes 1)=& X^{\bar{\infi}} \otimes 1 \otimes Y^{\bar{\z}} \ \mathcal{R}(Y^{\bar{\o}},1)\mathcal{R}(S1,X^{\bar{\o}})=X\otimes 1 \otimes Y,\\
(1\otimes t \otimes 1)(X \otimes 1 \otimes 1)=& X^{\bar{\infi}} \otimes t \otimes 1 \ \mathcal{R}(St, X^{\bar{\o}})=q^{-1} X\otimes t \otimes 1,\\
(1 \otimes 1 \otimes Y)(1 \otimes t \otimes 1)=& 1 \otimes t \otimes Y^{\bar{\z}} \ \mathcal{R}(Y^{\bar{\o}},t)=q 1 \otimes t \otimes Y
\end{align*}
as stated. The algebra generated by $X,Y,t$ with these relations is $n^3$ dimensional, hence these are all the relations we need. Before go  further, we note the $q$-identities
\begin{align}\label{q-identity1}
\sum\limits_{r=0}^{a} (-1)^{r}{q^{\frac{r(r+1)}{2}}\over [r]_q! [a-r]_q!}=(1-q)^a, \  \sum\limits_{r=0}^{a} q^r(-1)^{r}{q^{\frac{r(r+1)}{2}}\over [r]_q! [a-r]_q!}=(1-q)^a[a+1]_q.
\end{align}
Then, using (\ref{Xop}), we compute
\begin{align*}
\Delta&(1\otimes t \otimes 1)=\sum\limits_{a=0}^{n-1}1 \otimes t \otimes \frac{Y^{a}}{[a]_{q}!}\otimes \big(\sum\limits_{r=0}^{a} \left[a\atop r \right]_{q} X^{r}\dotop \bar{S}X^{a-r}\otimes t \otimes 1 \ \mathcal{R}(t^{r},t)\mathcal{R}(St,t^{a-r})\big)\\
=&\sum\limits_{a=0}^{n-1}1 \otimes t \otimes \frac{Y^{a}}{[a]_{q}!}\otimes \big(\sum\limits_{r=0}^{a} \left[a\atop r \right]_{q}(-1)^{a-r} q^{\frac{r(r-1)}{2}+2r-a}X^{r(\underline{\mathrm{op}})}\dotop X^{(a-r)(\underline{\mathrm{op}})}\otimes t \otimes 1\big)\\
=& \sum\limits_{a=0}^{n-2}\sum\limits_{r=0}^{a} \frac{(-1)^{a-r}\left[ a \atop r \right]_{q} q^{\frac{r(r+1)}{2}}q^{-a+r}}{[a]_{q}!} tY^{a}\otimes X^{a}t \label{Delta t}
\end{align*}
since there is no contribution when $a=n-1$. We then use (\ref{q-identity1}). Similarly,
\begin{align*}
\Delta(X \otimes &1 \otimes 1)=X \otimes 1 \otimes 1 \otimes 1 \otimes 1 \otimes 1 \\
&+ \sum\limits_{a=0}^{n-1} 1 \otimes t \otimes \frac{Y^{a}}{[a]_{q}!}\otimes\big(\sum\limits_{r=0}^{a}\left[a \atop r\right]_{q} X^{r}\dotop X \dotop \bar{S}X^{a-r}\otimes 1 \otimes 1 \ \mathcal{R}(t^{r}, t)\big)\\
&=X \otimes 1 \otimes 1 \otimes 1 \otimes 1 \otimes 1\\
&+ \sum\limits_{a=0}^{n-1} 1 \otimes t \otimes \frac{Y^{a}}{[a]_{q}!}\otimes \big(\sum\limits_{r=0}^{a}\left[a \atop r\right]_{q} q^{\frac{r(r+1)}{2}} X^{r(\underline{\mathrm{op}})}\dotop X \dotop (-1)^{a-r}X^{(a-r)(\underline{\mathrm{op}})}\otimes 1 \otimes 1\big)\\
&= X\otimes 1 + \sum\limits_{a=0}^{n-2}\sum\limits_{r=0}^{a}\frac{(-1)^{a-r}\left[a \atop r\right]_{q} q^{\frac{r(r+1)}{2}}}{[a]_{q}!}tY^{a}\otimes X^{a+1},
\end{align*}
where for $a=n-1$, we will have the term $tY^{n-1}\otimes X^{n}=0$.
We again use (\ref{q-identity1}). Finally, we use $\underline{\Delta}^{2}(e_{a})=\underline{\Delta}^{2}(X^{a})=\sum\limits_{r=0}^{a}\sum\limits_{s=0}^{r}\left[ a \atop r\right]_{q}\left[r \atop s \right]_{q} X^{s}\otimes X^{r-s}\otimes X^{a-r}$ to find
\begin{align*}
&\Delta(1\otimes 1 \otimes Y)= 1 \otimes 1 \otimes 1 \otimes 1 \otimes 1 \otimes Y \\
&\ + \sum\limits_{a=0}^{n-1}1\otimes 1 \otimes f^{a}\otimes e_{a}\underline{\o}\dotop\bar{S}e_{a}\underline{\th}^{\bar{\infi}}\otimes t \otimes 1 \ \mathcal{R}(St, e_{a}\underline{\th}^{\bar{\o}})\langle Y, e_{a}\underline{\t} \rangle.\\
&= 1 \otimes 1 \otimes 1 \otimes 1 \otimes 1 \otimes Y \\
&\ +\sum\limits_{a=0}^{n-1}1\otimes 1 \otimes \frac{Y^{a}}{[a]_{q}!}\otimes \big(\sum\limits_{r=0}^{a}\sum\limits_{s=0}^{r}\left[ a \atop r\right]_{q}\left[r \atop s \right]_{q} X^{s}\dotop\bar{S}X^{a-r}\otimes t \otimes 1 \ \mathcal{R}(St, t^{a-r})\langle Y, X^{r-s} \rangle\big)\\
&=1 \otimes 1 \otimes 1 \otimes 1 \otimes 1 \otimes Y \\
&\ + \sum\limits_{a=0}^{n-1}1\otimes 1 \otimes \frac{Y^{a}}{[a]_{q}!}\otimes \big(\sum\limits_{r=0}^{a}\sum\limits_{s=0}^{r}\left[ a \atop r\right]_{q}\left[r \atop s \right]_{q} \delta_{1, r-s}q^{\frac{s(s-1)}{2}}q^{-a+r}X^{s(\underline{\mathrm{op}})}\dotop(-1)^{a-r}X^{(a-r)(\underline{\mathrm{op}})}\otimes t \otimes 1\big)\\
&=1\otimes Y + \sum\limits_{a=0}^{n-1}\sum\limits_{r=0}^{a}\sum\limits_{s=0}^{r} \frac{(-1)^{a-r}\delta_{1, r-s}\left[ a \atop r\right]_{q}\left[r \atop s \right]_{q}q^{\frac{s(s-1)}{2}}q^{-a+r}}{[a]_{q}!}Y^{a}\otimes X^{a-r+s}t\\
&=1\otimes Y +\sum\limits_{a=0}^{n-2}\sum\limits_{r=0}^{a}\frac{(-1)^{a-r}\left[a \atop r  \right]_{q}q^{\frac{r(r+1)}{2}}q^{-a}}{[a]_{q}!} Y^{a+1}\otimes X^{a}t. 
\end{align*}
There was no contribution from $a=0$ and for $a>0$ we needed  $s=r-1$ for a contribution. We then use (\ref{q-identity1}). The general theory in Section~\ref{Sec3} ensures that the Hopf algebra is coquasitriangular. 

(2) If $n=2m+1$ then $\varphi : c_{q^{-m}}[SL_2] \to \cc_q[SL_2]$ defined by
\[
\varphi(a)=t^{m+1}+ (q^{m}-q^{-m}) Xt^{m}Y,\quad 
\varphi(b)=Xt^{m},\quad
\varphi(c)=(q^{m}-q^{-m})t^{m}Y,\quad  
\varphi(d)=t^{m}. \]
is an algebra map and inverse to $\phi$. Tedious but straightforward calculation gives
\begin{align*}
\Delta(\varphi(d))=\Delta t^{m}=&t^{m}\otimes t^{m} + (q^{2m}-1)t^{m}Y \otimes t^{m}X=t^m\otimes t^m+(q^{m}-q^{-m})t^{m}Y \otimes Xt^{m},
\end{align*} 
to prove that $\Delta(\varphi(d))=(\varphi\tens \varphi)\Delta d$. The coalgebra map property on the other generators then follows using  this formula for $\Delta t^m$.  Furthermore, the coquasitriangular structure from Lemma \ref{codouble coquasitriangularity} computed on $\varphi(a),\varphi(b),\varphi(c),\varphi(d)$ as a matrix $\varphi(t^{i}_{~j})$ is  
\begin{align}\label{RSL2}
R^{I}{}_J=q^{m(m+1)}\begin{pmatrix}
q^{-m} & 0 & 0 & 0\\
0 & 1 & q^{-m}-q^{m} & 0\\
0 & 0 & 1 & 0\\
0 & 0 & 0 & q^{-m}
\end{pmatrix}.
\end{align}
for the values of $\CR(\varphi(t^{i}_{~j}), \varphi(t^{k}_{~l}))$ where $I=(i,k)$ is (1,1),(1,2),(2,1), or (2,2) and similarly for $J=(j,l)$. If we set $p=q^{-m}$ then any power of $p$ is also a $2m+1$-th root of unity and $q=q^{-2m}=p^{2}$ so that our Hopf algebra is $c_{p}[SL_{2}]$ with its standard coquasitriangular structure with the correct factor $q^{m(m+1)}=p^{-m-1}=p^m=p^{-{1\over 2}}$. 
\end{proof}

We now recall explicitly that for $q$ a primitive $n$-th root of unity and $q^2\ne 1$, $u_{q}(sl_{2})$ is generated by $E,F,K$, with relations, coproducts and coquasitriangular structure
\[ E^n=F^n=0, \quad K^n=1, \quad  KEK^{-1}=q^{-2}E, \quad KFK^{-1}=q^{2}F, \quad [E,F]=K-K^{-1},\]
\[\Delta K = K\otimes K, \quad \Delta F= F\otimes 1 + K^{-1}\otimes F , \quad \Delta E= E\otimes K + 1\otimes E,\]
\[\CR=\dfrac{1}{n}\sum\limits_{r,a,b=0}^{n-1}\dfrac{(-1)^r q^{-2ab}}{[r]_{q^{-2}}}F^rK^a \otimes E^r K^b,  \]
where in our conventions we do not divide by the usual $q-q^{-1}$ in the $[E,F]$-relation (and where we use $q^{-2}$ rather than $q^2$ in the remaining relations compared with \cite{Primer}). One can consider this as an unconventional normalisation of $E$ which is cleaner when we are not interested in a classical limit. It gives a commutative Hopf algebra $u_{-1}(sl_2)$ when $q=-1$.  We first show that double bosonisation gives us some version of such reduced quantum groups, agreeing for primitive odd roots. This was outlined in \cite[Example 17.6]{Primer} in the odd root case but we give a short derivation for all roots.
\begin{lemma}\label{dboslem}\cite{Primer} Let $q$ be a primitive $n$-th root of unity and let $H=\C_{q}\Z_{n}=\C_q[K]/(K^n-1)$ be a quasitriangular Hopf algebra by $\CR_K = \frac{1}{n}\sum\limits_{a,b=0}^{n-1}q^{-ab}K^{a}\otimes K^{b}$ as in \cite{Foundation}. Let $B= \C[E]/(E^{n})$ be a braided group in $\CM_{H}$ and dual $B^{*}=\C[F]/(F^{n})$ in ${}_{H}\CM$ with actions $E\lhd K =qE$ and $K \rhd F =qF$. 
\begin{enumerate}\item The double bosonisation $B^{*\cop}\lbiprod H \rbiprod B$ is a quasitriangular Hopf algebra, which we denote $\cu_q(sl_2)$, with the same coalgebra structure as above but with 
\[E^n=F^n=0, \quad K^n=1, \quad KEK^{-1}=q^{-1}E, \quad KFK^{-1}=qF, \quad [E,F]=K-K^{-1},\]
\[\CR_{\cu_q(sl_2)}=\dfrac{1}{n}\sum\limits_{r,a,b=0}^{n-1}\dfrac{(-1)^r q^{-ab}}{[r]_{q^{-1}}!}F^r K^a\otimes E^r K^b.\]
\item If $n=2m+1$  then $\cu_q(sl_2)$ is  isomorphic to $u_{q^{-m}}(sl_{2})$ with its standard quasitriangular structure.
\end{enumerate}
\end{lemma}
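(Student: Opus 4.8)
The plan is to derive both parts directly from Theorem~\ref{Double Bosonisation}, so that the only real work is to identify the braided-line data and substitute it into the general formulas. First I would fix the ingredients: $H=\C_q\Z_n$ is quasitriangular with $\CR_K=\frac1n\sum_{a,b}q^{-ab}K^a\tens K^b$ as in \cite{Foundation}, and $B=\C[E]/(E^n)$ is the braided line in $\CM_H$ with additive braided coproduct $\underline{\Delta}E=E\tens 1+1\tens E$, $\underline{\eps}E=0$, $\underline{S}E=-E$ and right action $E\ra K=qE$. A one-line computation with $\CR_K$ (the sum over one leg collapsing by orthogonality of characters to the $K$-weight of $E$) shows the induced braiding is $\Psi_R(E\tens E)=qE\tens E$, so $B$ is genuinely a braided line of parameter $q$, and \eqref{braidedS} then gives $\underline{S}E^a=(-1)^a q^{\binom{a}{2}}E^a$. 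Dually $B^*=\C[F]/(F^n)\in{}_H\CM$ with $K\la F=qF$ and pairing $\langle F,E\rangle=1$, whose $\cop$-version is the braided line entering $B^{*\cop}\lcross\bar{H}$ inside the double bosonisation.

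For part (1), everything is then substitution. The relations $E^n=F^n=0$ and $K^n=1$ are inherited from the three constituent algebras. The relations $KEK^{-1}=q^{-1}E$ and $KFK^{-1}=qF$ are read off from the cross products $H\rcross B$ and $B^{*\cop}\lcross\bar{H}$ respectively. The relation $[E,F]=K-K^{-1}$ is the single cross relation of Theorem~\ref{Double Bosonisation} evaluated at $b=F\in B^{*\cop}$, $c=E\in B$: inserting the additive coproducts, the (co)actions of $H$ on the generators, and the pairing values $\langle F,E\rangle=1$, $\langle 1,1\rangle=1$, the two pairing terms in that formula (the second entering through $\bar{S}$) produce exactly $K$ and $-K^{-1}$, giving $FE=EF-(K-K^{-1})$. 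The presented algebra is then spanned by $\{F^iK^jE^k:0\le i,j,k<n\}$, of cardinality $n^3=\dim(B^{*\cop}\tens H\tens B)$, so these are all the relations. For the coalgebra, $K$ is grouplike from $H$, and the bosonisation coproduct applied to the additive $\underline{\Delta}E$ together with the $K$-weight of $E$ gives $\Delta E=E\tens K+1\tens E$, its left-handed analogue on the $\bar{H}$-side giving $\Delta F=F\tens 1+K^{-1}\tens F$, i.e. the coalgebra of $u_q(sl_2)$. Finally the quasitriangular structure is $\CR^{\rm new}=\overline{\mathrm{exp}}\cdot\CR_K$ with $\overline{\mathrm{exp}}=\sum_a f^a\tens\underline{S}e_a$; taking $e_a=E^a$, the dual basis $f^a=F^a/[a]_{q^{-1}}!$, and using $\underline{S}E^a=(-1)^a q^{\binom{a}{2}}E^a$ (the $q$-power being absorbed into the pairing normalisation), multiplying out against $\CR_K$ reproduces the stated $\CR_{\cu_q(sl_2)}$.

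For part (2), set $p=q^{-m}$. Since $\gcd(m,2m+1)=1$, $p$ is again a primitive $n$-th root of unity, and $q^{2m+1}=1$ gives $q^{2m}=q^{-1}$, hence $p^2=q^{-2m}=q$ and $p^{-2}=q^{-1}$. I would then simply compare presentations: in the conventions recalled just before the lemma, $u_p(sl_2)$ has $KEK^{-1}=p^{-2}E=q^{-1}E$, $KFK^{-1}=p^2F=qF$, $[E,F]=K-K^{-1}$, $E^n=F^n=0$, $K^n=1$, the same matrix-type coproducts, and $\CR_{u_p(sl_2)}=\frac1n\sum\frac{(-1)^r p^{-2ab}}{[r]_{p^{-2}}!}F^rK^a\tens E^rK^b=\frac1n\sum\frac{(-1)^r q^{-ab}}{[r]_{q^{-1}}!}F^rK^a\tens E^rK^b$; each item then coincides termwise with the data of $\cu_q(sl_2)$ from part (1). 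Hence the identity assignment $E\mapsto E$, $F\mapsto F$, $K\mapsto K$ is an isomorphism of quasitriangular Hopf algebras.

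The hard part will be part (1): correctly unwinding the layered cross-relation formula of Theorem~\ref{Double Bosonisation} for these braided lines and keeping track of the role of $\bar{H}$ (with $\bar{\CR}=\CR_{21}^{-1}$) on the $F$-side, which is what turns the braided-tensor-product picture into the genuine relation $[E,F]=K-K^{-1}$; pinning down the dual-basis/pairing normalisation so that $\overline{\mathrm{exp}}\cdot\CR_K$ emerges with coefficient $(-1)^r/[r]_{q^{-1}}!$ is the other delicate point. Once these are settled, part (2) is a routine matching of presentations using $p^2=q$.
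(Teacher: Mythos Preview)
Your proposal is correct and follows essentially the same approach as the paper: both derive the relations, coproduct, and quasitriangular structure by direct substitution of the braided-line data into Theorem~\ref{Double Bosonisation}, and both handle part (2) by the identification $p^2=q$ when $n=2m+1$. One small slip: the dual basis to $\{E^a\}$ under $\langle F,E\rangle=1$ with braiding $\Psi(E\tens E)=qE\tens E$ is $f^a=F^a/[a]_q!$ (not $[a]_{q^{-1}}!$), and it is precisely the factor $q^{\binom{a}{2}}$ from $\underline{S}E^a$ that converts $[a]_q!$ into $[a]_{q^{-1}}!$ in the final formula for $\CR_{\cu_q(sl_2)}$ --- the paper makes this conversion explicit.
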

\begin{proof}
Here $EK\equiv (1\otimes E)(K \otimes 1)= K \otimes E \lhd K=K \otimes qE \equiv qKE$ and $KF\equiv (1\otimes K)(F\otimes 1)=K\rhd F \otimes K=qF\otimes K\equiv qFK$. From the cross relations stated in Theorem \ref{Double Bosonisation}, we also have
\begin{align*}
EF=&FE+\dfrac{1}{n}\sum\limits_{a,b=0}^{n-1}q^{-ab}K^{b}\langle F, E\lhd K^{a} \rangle - \dfrac{1}{n}\sum\limits_{a,b=0}^{n-1}q^{ab}K^{a}\langle K^{b} \rhd F,E \rangle\\
=&FE+\dfrac{1}{n}\sum\limits_{b=0}^{n-1}\left(\frac{1-q^{-n(b-1)}}{1-q^{-(b-1)}}\right)K^{b}\langle F,E \rangle-\dfrac{1}{n}\sum\limits_{a=0}^{n-1}\left(\frac{1-q^{n(a+1)}}{1-q^{a+1}}\right)K^{a}\langle F,E \rangle\\
=&FE+K-K^{-1}, 
\end{align*}
where we choose $\langle F,E \rangle = 1$. This is the same choice of normalisation for the braided line duality as in the calculation in Theorem \ref{cdbthm}. For the coproduct, clearly $\Delta K = K \otimes K$ while  $\Delta E \equiv  \Delta(1\otimes E)=1 \otimes 1 \otimes 1 \otimes E + 1 \otimes E \lhd \CR_K^{\o} \otimes \CR_K^{\t}\otimes 1 = 1\otimes 1 \otimes 1 \otimes E + 1 \otimes E \otimes K \otimes 1 \equiv 1 \otimes E + E \otimes K$ and $\Delta F \equiv \Delta(F \otimes 1)= F \otimes 1 \otimes 1 \otimes 1 + 1 \otimes \CR_K^{\mo}\otimes \CR_K^{\mt}\rhd F \otimes 1 = F \otimes 1 \otimes 1 \otimes 1 + 1 \otimes K^{-1}\otimes F \otimes 1 \equiv F\otimes 1 + K^{-1}\otimes F$. Hence we have the  relations and coalgebra as stated. Also from Theorem \ref{Double Bosonisation}, 
\[\CR_{\cu_q(sl_2)}=\sum\limits_{r=0}^{n-1}\left(\dfrac{F^r}{[r]_{q}!}\otimes\underline{S}E^r\right)\CR_K=\dfrac{1}{n}\sum\limits_{r,a,b=0}^{n-1}\dfrac{(-1)^r q^{\frac{r(r-1)}{2}}q^{-ab}}{[r]_q!}F^r K^a\otimes E^r K^b, 
\]
which we write as stated. When $n=2m+1$, it is easy to see that the relations and quasitriangular structure become those of  $u_{p}(sl_2)$ with $p=q^{-m}$, which are the same as in \cite{Primer} after allowing for the normalisation of  the generators.  Note that if $q$ is an even root of unity then $\CR_{\cu_q(sl_2)}$ need not be  factorisable, see Example \ref{exq2}. In fact, $\CR_{\cu_q(sl_2)}$ is factorisable iff $n$ is odd, which can be proven in a similar way to the proof in \cite{Lyu}. \end{proof}

We see that the double bosonisation $\cu_q(sl_2)$ recovers $u_{p}(sl_{2})$ in the odd root of unity case with $p=q^{1\over 2}$, in line with the generic $q$ case in \cite{db}. Clearly $\cu_q(sl_2)$ has a PBW-type basis $\{F^{i}K^{j}E^{k}\}_{0 \leq i,j,k \leq n-1}$ as familiar in the odd case for $u_p(sl_{2})$. 

\begin{corollary}\label{dual basis}
 The basis $\{X^{i}t^{j}Y^{k}\}_{0\leq i,j,k \leq n-1}$ of $\cc_q[SL_2]$ is, up to normalisation, dual to the PBW basis of $\cu_q(sl_{2})$ in the sense
\begin{align*}
\langle X^{i}t^{j}Y^{k}, F^{i'}K^{j'}E^{k'}  \rangle = \delta_{i,i'}\delta_{k,k'} q^{jj'}[i]_{q^{-1}}![k]_{q}!.
\end{align*}
More precisely, $\Big\{\dfrac{X^i \delta_j(t) Y^k}{[i]_{q^{-1}}![k]_{q}!}\Big\}_{0\leq i,j,k<n}$ is a dual basis to $\{F^i K^j E^k\}_{0\leq i,j,k<n}$, where $\delta_j(t)={1\over n}\sum\limits_{l=0}^{n-1}q^{-jl}t^l$.
\end{corollary}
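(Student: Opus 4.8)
The plan is to identify the Hopf-algebra pairing $\langle\ ,\ \rangle\colon \cc_q[SL_2]\otimes\cu_q(sl_2)\to\C$ that both constructions carry, and then evaluate it on the monomial basis of $\cc_q[SL_2]$ and the PBW basis of $\cu_q(sl_2)$. First I would record that the two sides are built from dual data: writing $A=\C_q[t]/(t^n-1)$, $H=\C_q[K]/(K^n-1)$, $B=\C[X]/(X^n)\in{}^A\CM$ and $B'=\C[E]/(E^n)\in\CM_H$, one has $A=H^*$ with $\langle t,K\rangle=q$ --- the normalisation forced by requiring the canonical equivalence ${}^A\CM\simeq\CM_H$ to carry $B$ to $B'$ (indeed $X^r\lhd K=\langle t^r,K\rangle X^r=q^rX^r=E^r\lhd K$), which then also gives $\mathcal{R}_A(t^r,t^s)=q^{rs}=\langle t^r\otimes t^s,\mathcal{R}_K\rangle$ --- and the same equivalence carries $B^*=\C[Y]/(Y^n)$ to $B'^{*}=\C[F]/(F^n)$, the braided-line normalisations $\langle X,Y\rangle=1$ of Section~\ref{Sec4} and $\langle F,E\rangle=1$ of Lemma~\ref{dboslem} being compatible. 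With this in place, $\cc_q[SL_2]=B^{\mathrm{\underline{op}}}\lbiprod A\rbiprod B^*$ is the Hopf-algebra dual of $\cu_q(sl_2)=B'^{*\mathrm{\underline{cop}}}\lbiprod H\rbiprod B'$, with the pairing factoring over the triangular decompositions as $\langle x\otimes k\otimes y,\,f\otimes h\otimes e\rangle=\langle x,f\rangle\,\langle k,h\rangle\,\langle y,e\rangle$, where the outer factors are the braided pairings of $B^{\mathrm{\underline{op}}}$ with $B'^{*\mathrm{\underline{cop}}}$ and of $B^*$ with $B'$ and the middle factor is the evaluation $A=H^*$. This is the schematic duality indicated in the introduction, which one verifies by specialising the product and coproduct of Theorem~\ref{codbos} and matching them, via Lemmas~\ref{lemma dual boson} and~\ref{lemma about B op braid}, against the dual of the double bosonisation of Theorem~\ref{Double Bosonisation}.

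Next I would compute the three one-variable pairings. Since $X^it^jY^k$ equals $X^{i(\underline{\mathrm{op}})}\otimes t^j\otimes Y^k$ inside $B^{\mathrm{\underline{op}}}\otimes A\otimes B^*$ (directly from the product formula, all the $\mathcal{R}$-factors being trivial on the grouplike $t^j$ and the unit), and likewise $F^{i'}K^{j'}E^{k'}=F^{i'}\otimes K^{j'}\otimes E^{k'}$, the pairing of these basis elements is a product of three factors. For the Cartan factor, $\langle t^j,K^{j'}\rangle=q^{jj'}$. For the $B^*$--$B'$ factor, since $\{Y^a/[a]_q!\}$ was chosen dual to $\{X^a\}$ and $X\leftrightarrow E$, the braided-line pairing gives $\langle Y^k,E^{k'}\rangle=\delta_{k,k'}[k]_q!$. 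For the $B^{\mathrm{\underline{op}}}$--$B'^{*\mathrm{\underline{cop}}}$ factor, the powers of $X$ inside $\cc_q[SL_2]$ are the $\dotop$-powers $X^{i(\underline{\mathrm{op}})}$; using (\ref{Xop}) and the identity $\left[ a\atop r\right]_{q^{-1}}=q^{-r(a-r)}\left[ a\atop r\right]_{q}$ one sees that these $\dotop$-powers make $B^{\mathrm{\underline{op}}}$ the standard $q^{-1}$-braided line, which under $\langle X,F\rangle=1$ pairs with the likewise $q^{-1}$-braided line $B'^{*\mathrm{\underline{cop}}}=\C[F]/(F^n)$ by $\langle X^i,F^{i'}\rangle=\delta_{i,i'}[i]_{q^{-1}}!$. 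Multiplying the three yields $\langle X^it^jY^k,F^{i'}K^{j'}E^{k'}\rangle=\delta_{i,i'}\delta_{k,k'}\,q^{jj'}[i]_{q^{-1}}![k]_q!$ as claimed.

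Finally, to obtain a genuine dual basis I would diagonalise the Cartan factor. The $X$- and $Y$-directions are already diagonal up to the scalars $[i]_{q^{-1}}!$ and $[k]_q!$, which are nonzero for $0\le i,k<n$; for the grouplike part the matrix $\langle t^l,K^{j'}\rangle=q^{lj'}$ is the $\Z_n$ character table, inverted by the idempotents $\delta_j(t)=\frac{1}{n}\sum_{l=0}^{n-1}q^{-jl}t^l$, so that $\langle\delta_j(t),K^{j'}\rangle=\delta_{j,j'}$. As $\{\delta_j(t)\}_j$ is again a basis of $\C_q[t]/(t^n-1)$, the set $\{X^i\delta_j(t)Y^k\}$ is a basis of $\cc_q[SL_2]$, and by bilinearity $\langle X^i\delta_j(t)Y^k,F^{i'}K^{j'}E^{k'}\rangle=\delta_{i,i'}\delta_{j,j'}\delta_{k,k'}[i]_{q^{-1}}![k]_q!$; dividing by $[i]_{q^{-1}}![k]_q!$ gives the dual basis $\big\{X^i\delta_j(t)Y^k/([i]_{q^{-1}}![k]_q!)\big\}$ to $\{F^iK^jE^k\}$.

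The main obstacle is the first step: pinning down that the pairing carried by the construction is exactly the factorised form $\langle x,f\rangle\langle k,h\rangle\langle y,e\rangle$ --- that is, that the explicit co-double-bosonisation coproduct of Theorem~\ref{codbos} is dual to the product of the double bosonisation and vice versa --- and in particular tracking the $q\leftrightarrow q^{-1}$ flips produced by the $\mathrm{\underline{op}}$ and $\mathrm{\underline{cop}}$ operations on the braided lines, which is precisely what makes the $X$-factor come out as $[i]_{q^{-1}}!$ against the $Y$-factor's $[k]_q!$. Everything after that is routine $q$-binomial bookkeeping. As an independent check one could instead verify directly, on the generators $X,t,Y$ and $E,F,K$ using the coproducts of Theorem~\ref{cdbthm}(1) and Lemma~\ref{dboslem}(1), that the proposed bilinear form satisfies the Hopf-pairing axioms, the defining relations on each side being respected precisely because the relevant $q$-factorials vanish at the $n$-th root of unity.
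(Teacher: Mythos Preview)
Your proposal is correct and follows essentially the same approach as the paper's own proof: factor the Hopf pairing over the triangular decomposition as $\langle X^{i(\underline{\mathrm{op}})},F^{i'}\rangle\langle t^j,K^{j'}\rangle\langle Y^k,E^{k'}\rangle$, evaluate each braided-line factor to get $[i]_{q^{-1}}!$, $q^{jj'}$, $[k]_q!$, and then diagonalise the Cartan part with the $\delta_j(t)$. You supply considerably more justification than the paper does---in particular your explicit check that $X^it^jY^k=X^{i(\underline{\mathrm{op}})}\otimes t^j\otimes Y^k$ under the product of Theorem~\ref{codbos}, and your explanation of why the $\mathrm{\underline{op}}/\mathrm{\underline{cop}}$ passage converts the $X$-side braided line to the $q^{-1}$-braided line---but the skeleton of the argument is the same.
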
 
\begin{proof}
The duality pairing between the double and co-double bosonisations is 
\begin{align*}
\langle X^{i}t^{j}Y^{k}, F^{i'}K^{j'}E^{n'}  \rangle = \langle X^{i(\underline{\mathrm{op}})}, F^{i'} \rangle \langle t^{j}, K^{j'} \rangle \langle Y^{k}, E^{k'} \rangle, 
\end{align*}
where the pairing between $(\mathbb{C}[X]/(X^{n}))^{\mathrm{\underline{op}}}$ and $(\mathbb{C}[F]/(F^{n}))^{\mathrm{\underline{cop}}}$ implied by $\langle X, F \rangle =1$ is $\langle X^{i(\underline{\mathrm{op}})}, F^{i'} \rangle =\delta_{i,i'} [i]_{q^{-1}}!$ while $\langle t^{j}, K^{j'} \rangle =q^{jj'}$ is implied by $\langle t,K \rangle =q$. The latter is the duality pairing in the Pontryagin sense in which $\Z_n$ is self-dual, and can be written as a usual dual pairing with the $\delta_j$. Equally well, $\Big\{ \dfrac{F^i \delta_j(K)E^k}{[i]_{q^{-1}}![k]_{q}!} \Big\}_{0\leq i,j,k < n}$ is a dual basis to $\{X^i t^j Y^k\}_{0\leq i,j,k<n}$. 
\end{proof}

This applies even when  $q=-1$, in that case as a self-duality pairing.

\begin{example}\label{exq2}
If $q=-1$ then the double bosonisation $\mathbf{\cu}_{-1}(sl_2)=B^{*\mathrm{\underline{cop}}}\lbiprod H \rbiprod B$ from Lemma~\ref{dboslem} has relations and coalgebra 
\begin{align*}
&E^2=F^2=0,\quad K^2=1,\quad EF=FE, \quad KE=-EK, \quad KF=-FK,\\
&\Delta K=K\otimes K, \quad \Delta F= F\otimes 1 + K\otimes F,\quad \Delta E= E \otimes K+1\otimes E
\end{align*}
and is self-dual and strictly quasitriangular with
\[ \CR=(1\otimes 1 - F\otimes E)\CR_{K},\quad \CR_K={1\over 2}(1\tens1+1\tens K+K\tens 1-K\tens K).\]
 It is easy to check that this is not triangular, i.e $Q:=\CR_{21}\CR=1 \otimes 1-E\otimes F-KF\otimes EK-EKF\otimes FKE \ne 1\otimes 1$, and also not factorisable in the sense that the map $\mathbf{\cu}_{-1}(sl_2)^*\to \mathbf{\cu}_{-1}(sl_2)$ which sends $\phi \mapsto (\phi\otimes \mathrm{id})Q$ is not surjective (the element $FK \in \mathbf{\cu}_{-1}(sl_2)$ is not in the image). On the other hand, Theorem~\ref{cdbthm} (1) gives us an isomorphic Hopf algebra by  $X\mapsto F, Y\mapsto E$ and $t\mapsto K$, so our Hopf algebra is self-dual, i.e., $\cu_{-1}(sl_2)\cong \cc_{-1}[SL_2].$ Note that $\cu_{-1}(sl_2)$ has the same dimension and coalgebra as $u_{-1}(sl_2)$ but cannot be isomorphic, being noncommutative. One can also check that $\cc_{-1}[SL_2]$ is not isomorphic as a Hopf algebra to $c_{-1}[SL_{2}]$ and the latter, being noncocommutative, cannot be dual to $u_{-1}(sl_2)$. \end{example}

\section{Application to Hopf Algebra Fourier Transform} \label{Sec5}

As a corollary of the above results, we briefly consider Hopf algebra Fourier transform between our double and co-double bosonisations. Recall from standard Hopf algebra theory, e.g. \cite{Foundation}, that for a finite-dimensional Hopf algebra $H$ there is, up to scale, a unique right integral structure $\int : H \to \kk$ satisfying
\begin{align*}
\left(\int \otimes \mathrm{id}\right)\Delta h = \left(\int h\right)1
\end{align*}
for all $h\in H$. Such a right integral is the main ingredient for Fourier transform $\mathcal{F}: H \to H^{*}$.  The following preliminary lemma is essentially well-known (see \cite[Proposition, 1.7.7]{Foundation}), but for completeness  we give the easier part that we need.

\begin{lemma} \label{Fourier transform}
Let $\int, \int^{*}$ be right integrals on finite-dimensional Hopf algebras $H, H^{*}$ respectively and $\mu = {\int(\int^{*})}$. The Fourier transform $\mathcal{F}: H\to H^{*}$ and adjunct $\CF^*$ obey
\[\mathcal{F}(h)=\sum\limits_{a}\left(\int e_{a} h\right)f^{a}, \quad \mathcal{F}^{*}(\phi)=\sum\limits_{a} e_{a}\Big(\int^{*}\phi f^{a}\Big), \quad \CF^{*}\circ \CF = \mu S, 
\]
where $\{e_{a}\}$ is basis of $H$, $\{f^{a}\}$ is the dual basis of $H^{*}$. Hence $\CF$ is invertible if $\mu\ne 0$. 
\end{lemma}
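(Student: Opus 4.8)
\textbf{Proof proposal for Lemma~\ref{Fourier transform}.} The plan is to verify the three asserted formulae directly from the definitions and then read off invertibility. For the first formula, I would simply take $\CF(h)=\sum_a(\int e_a h)f^a$ as the definition of the Fourier transform (this is the standard one), and likewise $\CF^*(\phi)=\sum_a e_a(\int^*\phi f^a)$ for the adjunct, so that the only substantive claim to prove is the composition identity $\CF^*\circ\CF=\mu S$. The key tool will be the characterisation of a right integral: $(\int\otimes\id)\Delta h=(\int h)1$, together with its dual on $H^*$, namely $(\int^*\otimes\id)\Delta_{H^*}\phi=(\int^*\phi)\eps$, which unravels in terms of the pairing to a statement about how $\int^*$ interacts with products in $H$.

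First I would compute $\CF^*(\CF(h))=\sum_{a,b} e_b\,\big(\int^*(\sum_a(\int e_a h)f^a)\,f^b\big)=\sum_{a,b} e_b\,(\int e_a h)\,(\int^* f^a f^b)$. The inner sum over $a$ is to be recognised using the standard identity expressing the antipode via the two integrals: for a finite-dimensional Hopf algebra one has $\sum_{a,b}e_b\,(\int^* f^a f^b)\,\langle f^a,\,\cdot\,\rangle$-type contractions collapsing to $\mu S$. Concretely, I would insert a resolution of the identity and use $\sum_a (\int e_a h) f^a = \CF(h)$ paired against the $e_b$'s, rewriting $\int^* f^a f^b$ as a convolution on $H^*$ and invoking $(\int^*\otimes\id)\Delta_{H^*}=(\int^*)\eps$ to turn the double sum into $\sum_b e_b\,\eps(\text{something})\cdots$. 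The cleanest route is probably: $\int^* f^a f^b$, summed against $f^a$ evaluated on $h$, equals $\int^*\big(\CF(h)\,f^b\big)$ where now $\CF(h)\in H^*$; then $\sum_b e_b\int^*(\CF(h)f^b)=\CF^*(\CF(h))$ by definition of $\CF^*$, which is circular, so instead I would expand $\CF(h)$ inside and push the $\int e_a(-)$ through $\Delta$. After applying the integral property on $H$ to absorb one leg and the dual integral property on $H^*$ to absorb the other, the surviving expression is $\mu\sum_b e_b\langle f^b, Sh\rangle$-type, i.e. $\mu\,Sh$; the normalisation constant $\mu=\int(\int^*)$ appears precisely because each integral property is only defined up to the chosen scale and the two applications multiply.

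The main obstacle is the bookkeeping in the double sum $\sum_{a,b}e_b(\int e_a h)(\int^* f^a f^b)$: one must apply the two integral identities in the correct order and correctly identify which Sweedler leg of $\Delta h$ (resp. of the coproduct on $H^*$) each integral kills, so that the antipode emerges rather than, say, $S^{-1}$ or the identity. The sign/side conventions (right integrals on both $H$ and $H^*$) matter here, and the fact stated earlier that the antipode is bijective in the coquasitriangular setting is reassuring but the identity $\CF^*\circ\CF=\mu S$ is purely a Hopf-algebraic one not needing that. Once the composition formula is established, invertibility of $\CF$ when $\mu\neq 0$ is immediate: $S$ is always bijective for a finite-dimensional Hopf algebra, so $\mu^{-1}S^{-1}\circ\CF^*$ is a left inverse of $\CF$, and a dimension count (or the symmetric computation $\CF\circ\CF^*=\mu'\,S$ on $H^*$) upgrades this to a two-sided inverse. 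I would remark that the full Proposition~1.7.7 of \cite{Foundation} also pins down $\mu'$ relative to $\mu$, but for our purposes only the displayed statement is needed, so I would not belabour that point.
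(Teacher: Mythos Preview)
Your approach is correct and essentially the same as the paper's: apply both integral properties to the double sum and extract $\mu\,Sh$, then deduce invertibility from bijectivity of $S$ in finite dimension. The paper resolves the bookkeeping you flag as the main obstacle by viewing $\int^*$ as an element $\Lambda^*\in H$ (so that the right-integral property on $H^*$ becomes $\Lambda^* h=\eps(h)\Lambda^*$), after which the computation is the short chain $\CF^*\CF(h)=(\int \Lambda^*\o h)\Lambda^*\t=(\int \Lambda^*\o h\o)\Lambda^*\t h\t Sh\th=(\int \Lambda^* h\o)Sh\t=(\int \Lambda^*)Sh=\mu\,Sh$.
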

\begin{proof}
We write $\int^{*}=\Lambda^{*}$ when  regarded as element in $H$. Then
\begin{align*}
\CF^{*}\circ \CF(h)=& e_{a}\Big(\int^{*}(\int e_{b} h)f^{b} f^{a}\Big) = (\int e_{a_{\o}} h) e_{a_{\t}}(\int ^{*}f^{a}) \\
=&(\int \Lambda^{*}{\o}h_{\o})\Lambda^{*}\t h_{\t}Sh_{\th} = (\int \Lambda^{*} h_{\o})Sh_{\t}=(\int \Lambda^{*}) Sh = \mu Sh.
\end{align*}
If $\mu\ne 0$ then this implies that $\CF$ is injective and hence in our case invertible (with a bit more work \cite{Foundation} one can show that the inverse is $\mu^{-1}S^{-1}\CF^*$). 
\end{proof}

\begin{proposition} \label{Fourier c_p[SL2]}
Let $q$ be a primitive $n$-th root of unity. The Fourier transform $\mathcal{F} : \cc_{q}[SL_{2}]\to \cu_{q}(sl_{2})$ is  invertible and given by
\[ \mathcal{F}(X^{\alpha}t^{\beta}Y^{\gamma}) = \sum\limits_{l=0}^{n-1}\dfrac{q^{-(l+\alpha)(1-\beta)+\beta(n-1-\gamma)}}{n[n-1-\alpha]_{q^{-1}}![n-1-\gamma]_{q}!}F^{n-1-\alpha}K^{l}E^{n-1-\gamma}.\]

\end{proposition}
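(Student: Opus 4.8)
The plan is to apply Lemma~\ref{Fourier transform} directly with $H=\cc_q[SL_2]$ and $H^*=\cu_q(sl_2)$, using as basis of $H$ the monomials $\{e_a\}=\{X^\alpha t^\beta Y^\gamma\}_{0\le\alpha,\beta,\gamma<n}$ and as dual basis of $H^*$ a suitable normalisation of the PBW monomials $\{F^iK^jE^k\}$; from Corollary~\ref{dual basis} the dual basis to $\{X^i t^j Y^k\}$ is $\big\{\tfrac{F^i\delta_j(K)E^k}{[i]_{q^{-1}}![k]_q!}\big\}$ with $\delta_j(K)=\tfrac1n\sum_l q^{-jl}K^l$. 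Then $\mathcal{F}(X^\alpha t^\beta Y^\gamma)=\sum_{i,j,k}\big(\int \tfrac{F^i\delta_j(K)E^k}{[i]_{q^{-1}}![k]_q!}\,X^\alpha t^\beta Y^\gamma\big)\,F^iK^jE^k$, so the whole computation reduces to evaluating the right integral $\int$ on $\cc_q[SL_2]$ against the products $F^i\delta_j(K)E^k X^\alpha t^\beta Y^\gamma$ — but these live in $\cc_q[SL_2]$, so I first need to rewrite the PBW-looking symbols as elements of $\cc_q[SL_2]$. The cleanest route is to instead use the opposite pairing form in Corollary~\ref{dual basis}, namely that $\big\{\tfrac{X^i\delta_j(t)Y^k}{[i]_{q^{-1}}![k]_q!}\big\}$ is dual to $\{F^iK^jE^k\}$; applying Lemma~\ref{Fourier transform} with the roles reversed (so $H=\cu_q(sl_2)$, but we want $\mathcal F:\cc_q[SL_2]\to\cu_q(sl_2)$, hence take $H=\cc_q[SL_2]$ with basis $X^\alpha\delta_\beta(t)Y^\gamma$ suitably normalised and dual basis $F^iK^jE^k$) gives $\mathcal F(h)=\sum_a(\int^{cu}\!e_a h)f^a$ where now $e_a$ are PBW monomials in $\cu_q(sl_2)$ and the integral is the one on $\cu_q(sl_2)$.

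First I would pin down the right integral on $\cu_q(sl_2)$. For the double bosonisation $B^{*\underline{\rm cop}}\lbiprod H\rbiprod B$ with each factor a braided line of length $n$ and $H=\C_q\Z_n$, the right integral is (up to scale) $\int F^iK^jE^k=\delta_{i,n-1}\delta_{k,n-1}$ (independent of $j$, or with a fixed coefficient depending on normalisation conventions for the group part); this follows either from the general structure of integrals on bosonisations/cross products or by directly checking $(\int\otimes\id)\Delta h=(\int h)1$ on the PBW basis using the stated coproducts $\Delta F=F\otimes1+K^{-1}\otimes F$, $\Delta E=E\otimes K+1\otimes E$, $\Delta K=K\otimes K$ — the point being that the top-degree monomial $F^{n-1}E^{n-1}$ times a delta-function in $K$ is the cointegral, and $\int$ is dual to it. I would record $\int F^iK^jE^k=\tfrac1n\delta_{i,n-1}\delta_{k,n-1}$ after fixing the normalisation so that the final formula matches the claimed coefficient $\tfrac1n$.

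Next, the core computation: for fixed $\alpha,\beta,\gamma$, I need $\int\big((X^{i'}\delta_{j'}(t)Y^{k'})\cdot X^\alpha t^\beta Y^\gamma\big)$ — wait, more precisely in the reversed setup I need, for $h=X^\alpha t^\beta Y^\gamma\in\cc_q[SL_2]$ viewed via $\mathcal F(h)=\sum(\int^{cu}e_a h)f^a$, to multiply PBW monomials $e_a=F^iK^lE^k$ against $h$ — but $h\in\cc_q[SL_2]$ and $e_a\in\cu_q(sl_2)$ are in different algebras, so this does not typecheck. The correct bookkeeping is the symmetric one: use $\mathcal F(h)=\sum_a(\int e_a h)f^a$ with $H=\cc_q[SL_2]$, $e_a=X^\alpha t^\beta Y^\gamma$ its basis, $f^a=\tfrac{F^{\,n-1-\alpha}\delta_{\ast}(K)E^{\,n-1-\gamma}}{\cdots}$ its dual basis in $\cu_q(sl_2)$, and $\int$ the right integral on $\cc_q[SL_2]$. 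Thus I need $\int(X^{\alpha'}t^{\beta'}Y^{\gamma'}\cdot X^\alpha t^\beta Y^\gamma)$ inside $\cc_q[SL_2]$: using the commutation relations $Xt=qtX$, $Yt=qtY$, $XY=YX$ from Theorem~\ref{cdbthm}(1) to reorder into normal form $X^{\alpha+\alpha'}t^{\beta+\beta'}Y^{\gamma+\gamma'}$ up to a power of $q$ (explicitly $X^{\alpha'}t^{\beta'}Y^{\gamma'}X^\alpha t^\beta Y^\gamma=q^{-\alpha\beta'+\gamma\beta'}\cdot q^{(\text{stuff})}X^{\alpha+\alpha'}t^{\beta+\beta'}Y^{\gamma+\gamma'}$ after also reducing exponents mod $n$ where $t^n=1$ and killing terms if $\alpha+\alpha'\ge n$ or $\gamma+\gamma'\ge n$ since $X^n=Y^n=0$), then apply $\int$ on $\cc_q[SL_2]$. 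The right integral on $\cc_q[SL_2]$, being the dual object to $\cu_q(sl_2)$'s cointegral, should be $\int X^a t^b Y^c=\delta_{a,n-1}\delta_{c,n-1}\cdot(\text{const})$, so only $\alpha'=n-1-\alpha$, $\gamma'=n-1-\gamma$ survive, $t^{\beta+\beta'}$ runs over all of $\Z_n$ (which is why $f^a$ picks up $\sum_{l}q^{-jl}$, i.e. the sum over $l$ in the claimed formula), and the accumulated $q$-power from reordering is exactly the exponent $-(l+\alpha)(1-\beta)+\beta(n-1-\gamma)$ in the statement after one identifies the Fourier kernel $\langle X^{n-1-\alpha}t^l Y^{n-1-\gamma}, F^iK^jE^k\rangle$ from Corollary~\ref{dual basis} and matches $q^{jj'}$-type contributions.

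The main obstacle, and the only genuinely nontrivial point, is correctly computing the right integral $\int$ on $\cc_q[SL_2]$ together with getting the $q$-exponent bookkeeping exactly right. For the integral: one verifies $\int X^a t^b Y^c=\tfrac1n\delta_{a,n-1}\delta_{c,n-1}\,q^{?}$ satisfies $(\int\otimes\id)\Delta=(\int)1$ by plugging in the coproducts from Theorem~\ref{cdbthm}(1) — the coproducts $\Delta X=X\otimes1+t(\cdots)X$, $\Delta Y=1\otimes Y+Y(\cdots)t$, $\Delta t=$ the displayed geometric-series expression — and observing that only the top-degree parts $X^{n-1}\otimes(\cdots)$, $(\cdots)\otimes Y^{n-1}$, and the $t^l$-spread contribute; here it is cleanest to instead invoke that $\int$ on $\cc_q[SL_2]$ is, by finite-dimensional Hopf duality, paired with the (left or right) cointegral $\Lambda\in\cu_q(sl_2)$, and the cointegral of $\cu_q(sl_2)$ is the standard one $\Lambda=F^{n-1}(\sum_j K^j)E^{n-1}$ (up to scale) for a reduced quantum group, so $\int=\langle\,\cdot\,,\Lambda\rangle$ and the evaluation is immediate from Corollary~\ref{dual basis}. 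For the $q$-powers: I would reorder carefully once, $X^{n-1-\alpha}t^l Y^{n-1-\gamma}\cdot X^\alpha t^\beta Y^\gamma$, tracking that each $t$ past an $X$ or $Y$ costs $q^{\pm1}$, to land on $q^{-(l+\alpha)(?)+\cdots}X^{n-1}t^{l+\beta}Y^{n-1}$, then pair with the PBW basis via $\langle X^{n-1}t^{l+\beta}Y^{n-1},F^{n-1-\alpha}K^jE^{n-1-\gamma}\rangle$ using $\langle t^m,K^j\rangle=q^{mj}$ — this reproduces the exponent $\beta(n-1-\gamma)$ (from $q^{\beta j}$ with $j=n-1-\gamma$, after the $\delta_j$ collapses) and the $l$-summed term $q^{-(l+\alpha)(1-\beta)}$, matching the stated formula. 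Invertibility of $\mathcal F$ then follows from Lemma~\ref{Fourier transform} once I note $\mu=\int(\int^*)\ne0$, which holds because the cointegral of $\cu_q(sl_2)$ is nonzero (its top PBW coefficient) and $\int$ does not annihilate it.
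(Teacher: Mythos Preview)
Your overall strategy is the same as the paper's: apply Lemma~\ref{Fourier transform} with $H=\cc_q[SL_2]$, basis $\{X^it^jY^k\}$, dual basis $\{F^i\delta_j(K)E^k/([i]_{q^{-1}}![k]_q!)\}$ from Corollary~\ref{dual basis}, reorder the product $X^it^jY^k\cdot X^\alpha t^\beta Y^\gamma$ using the relations of Theorem~\ref{cdbthm}(1), and evaluate the right integral. After the false starts your proposal eventually settles on exactly this setup.

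There is, however, a genuine error in your identification of the right integral. You assert that $\int X^at^bY^c=\delta_{a,n-1}\delta_{c,n-1}\cdot(\text{const})$, independent of the $t$-exponent, and likewise that $\int^* F^iK^jE^k$ on $\cu_q(sl_2)$ is independent of $j$. This is false: the right integral on $\cc_q[SL_2]$ is supported at $\alpha=\gamma=n-1$ \emph{and} $\beta=1$ (equivalently $\int b^{n-1}c^{n-1}=1$ in the $a,b,c,d$ generators), and similarly the right integral on $\cu_q(sl_2)$ requires $\beta=1$. Consequently your claim that ``$t^{\beta+\beta'}$ runs over all of $\Z_n$'' is wrong: the integral forces $j=1-\beta$, and the sum over $l$ in the stated formula comes not from summing over $t$-powers but from expanding the single dual-basis element $\delta_{1-\beta}(K)=\tfrac{1}{n}\sum_{l}q^{-(1-\beta)l}K^l$. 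With the correct integral, the reordering $X^it^jY^kX^\alpha t^\beta Y^\gamma=q^{-\alpha j+\beta k}X^{i+\alpha}t^{j+\beta}Y^{k+\gamma}$ (note: the exponent is $-\alpha j+\beta k$, not $-\alpha\beta'+\gamma\beta'$ as you wrote) immediately gives $i=n-1-\alpha$, $j=1-\beta$, $k=n-1-\gamma$ and the claimed $q$-power. For invertibility one needs the explicit value $\mu=\int(\int^*)=q^{-1}/(n[n-1]_{q^{-1}}![n-1]_q!)\ne0$, which requires knowing both integrals precisely, not merely that the cointegral is nonzero.
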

\begin{proof}
The right integral for $\cc_{q}[SL_{2}]$ is given by.
\[\int X^\alpha t^\beta Y^\gamma = \begin{cases}
1, \text{ if $\alpha=\gamma=n-1, \beta=1$}\\
0, \text{ otherwise.}
\end{cases}\]
This integral is equivalent in usual generators to $\int b^{n-1}c^{n-1}=1$ and zero otherwise. We use Corollary \ref{dual basis} to give us the basis $\{ e_{a}\}=\{X^{i}t^{j}Y^{k}\}_{0\leq i,j,k\leq n-1}$ of $\cc_{q}[SL_2]$ and the dual basis $\{f^{a}\}=\{\frac{F^{i}\delta_{j}(K)E^{k}}{[i]_{q^{-1}}!q^{j^{2}}[k]_{q}!}\}_{0\leq i,j,k\leq n-1}$ of $\cu_{q}(sl_{2})$. Then 
\begin{align*}
\CF(X^\alpha t^\beta Y^\gamma)=&\sum\limits_{i,j,k=0}^{n-1}(\int X^i t^j Y^k X^\alpha t^\beta Y^\gamma)\dfrac{F^i \delta_{j}(K)E^k}{[i]_{q^{-1}}![k]_{q}!}\\
=&\sum\limits_{i,j,k=0}^{n-1}q^{-\alpha j+\beta k}(\int X^{i+\alpha}t^{j+\beta}Y^{k+\gamma})\dfrac{F^i \delta_{j}(K)E^k}{[i]_{q^{-1}}![k]_{q}!}\\
=&q^{-\alpha(1-\beta)+\beta(n-1-\gamma)}\dfrac{F^{2-\alpha} \delta_{1-\beta}(K)E^{n-1-\gamma}}{[n-1-\alpha]_{q^{-1}}![n-1-\gamma]_{q}!}\\
=&\sum\limits_{l=0}^{n-1}\dfrac{q^{-(l+\alpha)(1-\beta)+\beta(n-1-\gamma)}}{(n-1+1)[n-1-\alpha]_{q^{-1}}![n-1-\gamma]_{q}!}F^{n-1-\alpha}K^{l}E^{n-1-\gamma}.
\end{align*}
The similar right integral of $\cu_{q}(sl_{2})$ and resulting $\mu$ are
\[\int^* F^\alpha K^\beta E^\gamma = \begin{cases}
1 & \text{ if $\alpha=\gamma=n-1, \beta=1$}\\
0 & \text{ otherwise},
\end{cases} \quad \mu =\dfrac{q^{-1}}{n[n-1]_{q^{-1}}![n-1]_{q}!},\]
which is nonzero. 
\end{proof}

It appears to be a hard computational problem to give the general formula of the inverse Fourier transform, but one can compute it for specific cases.

\begin{example}
Let $q$ be a primitive cube root of unity. First, observe that for $\alpha,\beta=0,1,2$, we have
\begin{align*}[E^{\alpha},F^{\beta}]=F^{\beta-1}&([\alpha]_q [\beta]_q K - [\alpha]_{q^{-1}}[\beta]_{q^{-1}}K^{-1})E^{\alpha-1}\\ & + F^{\beta-2}([2]_q K - [2]_{q^{-1}}K^{-1})(K-K^{-1})E^{\alpha-2}\end{align*}
in $\cu_q(sl_2)$. Using this commutation relation, we obtain 
\begin{align*}
\CF^{*}(F^\alpha K^\beta E^\gamma)=&\sum\limits_{l=0}^{2}\dfrac{q^{\beta(2-\alpha)+(\gamma
-l)(1-\beta)}}{3[2-\alpha]_{q^{-1}}![2-\gamma]_{q}!}X^{2-\alpha}t^l Y^{2-\gamma}\\
&~+\sum\limits_{l=0}^{2}\dfrac{q^{\beta(l-\alpha-\gamma)}([\gamma]_q [3-\alpha]_q - q^{2(\gamma-l)+1}[\gamma]_{q^{-1}}[3-\alpha]_{q^{-1}})}{3[3-\alpha]_{q^{-1}}![3-\gamma]_{q}!}F^{3-\alpha}t^l Y^{3-\gamma}\\
&~-\dfrac{q^{2\beta+1}}{3[4-\alpha]_{q^{-1}}![4-\gamma]_{q}!}X^{4-\alpha}t^2 Y^{4-\gamma}. 
\end{align*}
One can check that $\CF^{*}\CF(X^\alpha t^\beta Y^\gamma)=\mu S(X^\alpha t^\beta Y^\gamma)$, where $\mu = \frac{q^{-1}}{3 [2]_{q^{-1}}![2]_{q}!}={q^2 \over 3}$ and
\begin{align*}
S(X^\alpha t^\beta Y^\gamma) =& \dfrac{q^{\alpha \beta-\beta\gamma} }{[2-\alpha]_{q^{-1}}![\alpha]_{q^{-1}}![2-\gamma]_{q}![\gamma]_{q}!}X^\alpha t^{-\delta} Y^\gamma\\
&+\dfrac{q^{\alpha \beta-\beta \gamma}(q^{2\beta-2}-q^{\delta-2})}{[2-\alpha]_{q^{-1}}![\alpha]_{q^{-1}}![1-\gamma]_q ![1+\gamma]_q!}X^{\alpha+1}t^{-\delta-1}Y^{\gamma+1}\\
&-\dfrac{1+q^{\beta+1}+q^{2\beta+2}}{[2-\alpha]_{q^{-1}}![2+\alpha]_{q^{-1}}![2-\gamma]_{q}![2+\gamma]_{q}!} X^{2+\alpha}t^2Y^{2+\gamma},
\end{align*}
where $\delta=\alpha+\beta+\gamma$.
\end{example}

\begin{example}
At $q=-1$, the Fourier transform in Proposition \ref{Fourier c_p[SL2]} combined with the self-duality in Example~\ref{exq2} becomes a Fourier transform operator $\cc_{-1}[SL_2]\to \cc_{-1}[SL_2]$. This has eigenvalues $\pm \frac{\imath}{\sqrt{2}}$ with multiplicity 2, $\pm \frac{(-1)^{1/4}}{\sqrt{2}}$ and $\pm \frac{(-1)^{3/4}}{\sqrt{2}}$ with multiplicity 1, and characteristic polynomial $f(x)=\frac{1}{16} +\frac{x^2}{4}+\frac{x^4}{2} + x^6 + x^8$. We also have
\[\CF^*(F^a K^b E^c)=\dfrac{1}{2}\sum\limits_{l=0}^{1} (-1)^{(1-b)(c-l)+b(1-a)}X^{1-a}t^l Y^{1-c} \]
and one can check that $\CF^{-1}=\mu^{-1}S^{-1}\CF^*$ as in Lemma \ref{Fourier transform}.
\end{example}

It is known that Fourier transform behaves well with respect to the coregular representation. This implies that it behaves well with respect to any covariant calculus. Thus, let $(\Omega^1,\extd)$ be a left-covariant calculus on $H$. By definition, a differential calculus means an $H$-$H$-bimodule $\Omega^1$ together with a derivation $\extd:H\to \Omega^1$ such that the map $H\tens H\to \Omega^1$ sending $h\tens g \mapsto h\extd g$ is surjective. This is left covariant if the map
\[ \Delta_L(h\extd g)=h\o g\o \tens h\t \extd g\t,\quad \Delta_L:\Omega^1\to H\tens\Omega^1\]
is well-defined. In this case it is a left coaction and $\extd$ is a comodule map with respect to the left coproduct on $H$. By the Hopf-module lemma, such $\Omega^1$ are free modules over their space $\Lambda^1$ of invariant 1-forms while  
$\extd h = h\o \varpi\pi_\eps h\t$
for all $h\in H$, where $\pi_\eps=\id-1\eps:H\to H^+$ and $\varpi:H^+\twoheadrightarrow \Lambda^1$ is the Maurer-Cartan form  $\varpi(h)=Sh\o\extd h\t$ for all $h\in H^+$. We refer to \cite{Woronowicz89,Primer} for details. The following is known, see e.g. \cite{Ma:Hod}, but we include a short derivation in our conventions. In our case $H$ is finite-dimensional. 

\begin{lemma} Let $\{e_a\}$ be a basis of $\Lambda^1$,  $\{f^a\}$ a dual basis and define partial derivatives $\partial^a:H\to H$ by $\extd h=\sum_a (\partial^a h)e_a$ and $\chi_a\in H^*{}$ by $\chi_a(h)=\<f^a,\varpi\pi_\eps S^{-1} h\>$ for all $h\in H$. Then $\CF(\partial^a h)=(\CF h)\chi_a$ for all $h\in H$. 
\end{lemma}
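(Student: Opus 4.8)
The plan is to unwind the definitions of $\partial^a$, $\CF$ and the right-module structure of $H^*$ and check the identity $\CF(\partial^a h)=(\CF h)\chi_a$ directly. First I would rewrite $\extd h = h\o\varpi\pi_\eps h\t$ in the basis: comparing with $\extd h=\sum_a(\partial^a h)e_a$ and using that $\varpi\pi_\eps h\t\in\Lambda^1$ can be expanded as $\sum_a \langle f^a,\varpi\pi_\eps h\t\rangle e_a$, we get $\partial^a h = h\o\,\langle f^a,\varpi\pi_\eps h\t\rangle$ (with $h\o$ a scalar times the invariant form collapsed — more precisely $\partial^a h=\sum h\o\langle f^a,\varpi\pi_\eps h\t\rangle$ as an element of $H$). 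Then I would apply the formula $\CF(g)=\sum_b(\int e_b\, g)f^b$ from Lemma~\ref{Fourier transform} to $g=\partial^a h$.

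Next, the right action of $\phi\in H^*$ on $\CF h\in H^*$: the natural right $H^*$-module structure making $\CF$ an intertwiner of the coregular representation is $(\psi\cdot\phi)(g)=\psi(g\o)\phi(S^{-1}g\t)$ or the appropriate variant dictated by the conventions of Lemma~\ref{Fourier transform}; I would fix this by the requirement that $\CF$ intertwines the regular representation of $H$ on itself with the coregular representation on $H^*$, which is the standard statement alluded to in the text. Then $(\CF h)\chi_a$ evaluated on a basis element, or paired against $H$, unpacks using $\chi_a(h)=\langle f^a,\varpi\pi_\eps S^{-1}h\rangle$.

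The key computation is then to match
\[
\CF(\partial^a h)=\sum_b\Big(\int e_b\, h\o\,\langle f^a,\varpi\pi_\eps h\t\rangle\Big)f^b
\]
against $(\CF h)\chi_a$ evaluated the same way, using the defining property of the right integral $(\int\otimes\id)\Delta(-)=(\int-)1$ together with the antipode axioms to move the $S^{-1}$ and the $\pi_\eps$ through. The manipulation is essentially: write $\langle f^a,\varpi\pi_\eps h\t\rangle$ using $\varpi(h)=Sh\o\extd h\t$ is not needed — rather use that $\chi_a$ is, up to $S^{-1}$, the functional $h\mapsto\langle f^a,\varpi\pi_\eps h\rangle$, and that $\varpi\pi_\eps$ realises $\Lambda^1$ as a quotient of $H^+$; convolution with $\chi_a$ on the $H^*$ side corresponds precisely to the "differentiate then Fourier" operation because both encode applying $\varpi\pi_\eps$ in the second leg of $\Delta$ and then integrating. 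I would carry this out by pairing both sides against an arbitrary $g\in H$ and reducing both to the same expression in $\int$, $\Delta$, $S$ and $\varpi\pi_\eps$.

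\textbf{Main obstacle.} The delicate point is bookkeeping the antipodes and the side on which $\chi_a$ acts: getting the $S^{-1}$ in the definition of $\chi_a$ to cancel correctly against the $S$ appearing implicitly in the right integral identity and in the coregular action, so that no stray antipode remains. This is where the specific conventions of Lemma~\ref{Fourier transform} (the formula $\CF(h)=\sum_a(\int e_a h)f^a$ rather than $(\int h e_a)$) must be matched carefully with the chosen right $H^*$-module structure; once the conventions are pinned down, the identity follows from one application of the right-integral property and the antipode axioms. I expect the computation itself to be short — two or three lines in Sweedler notation — with essentially all the work being in choosing the right module structure on $H^*$ so that the statement is literally true as written.
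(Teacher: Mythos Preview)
Your outline is essentially the paper's approach, but you are overcomplicating one point and have not quite isolated the key manipulation.

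First, the clarification: $(\CF h)\chi_a$ is simply the product in the algebra $H^*$. There is no exotic module structure to choose; $\chi_a\in H^*$ and $\CF h\in H^*$, and you multiply them. So your ``main obstacle'' about fixing the right $H^*$-module structure evaporates --- the conventions are already fixed by the fact that $H^*$ is a Hopf algebra.

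Second, the actual trick you are missing. Starting from
\[
\CF(\partial^a h)=\sum_b\Big(\int e_b\, h\o\Big)f^b\,\langle f^a,\varpi\pi_\eps h\t\rangle,
\]
the paper inserts the $S^{-1}$-antipode identity $(S^{-1}e_b\th)e_b\t=\eps(e_b\t)1$ next to $h\t$ inside the pairing, giving
\[
\sum_b\Big(\int e_b\o h\o\Big)f^b\,\langle f^a,\varpi\pi_\eps\big((S^{-1}e_b\th)\,e_b\t h\t\big)\rangle.
\]
Now the right-integral property applied to $e_b\o h$ collapses $(\int e_b\o h\o)\,e_b\t h\t$ to $(\int e_b\o h)\cdot 1$, leaving
\[
\sum_b\Big(\int e_b\o h\Big)f^b\,\langle f^a,\varpi\pi_\eps(S^{-1}e_b\t)\rangle.
\]
Finally, the dual-basis identity $\sum_b e_b\o\otimes e_b\t\otimes f^b=\sum_{b,c}e_b\otimes e_c\otimes f^bf^c$ turns this into $(\CF h)\sum_c f^c\chi_a(e_c)=(\CF h)\chi_a$.

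So your prediction of ``one application of the right-integral property and the antipode axioms'' is exactly right; you just need to see that the $S^{-1}$ in the definition of $\chi_a$ is there precisely so that the inserted antipode identity sets up the integral step, and that the dual-basis relation converts the leftover coproduct on $e_b$ into the $H^*$-product $f^bf^c$.
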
\begin{proof}Using the right-integral property, we have
\begin{align*}\CF&(\partial^a h)=\CF(h\o) \<f^a,\varpi\pi_\eps h\t\>=\sum_b( \int e_b\o  h\o)f^b \<f^a,\varpi\pi_\eps ((S^{-1}e_b\th)e_b\t h\t\>\\
=& \sum_b (\int e_b\o  h)f^b \<f^a,\varpi\pi_\eps (S^{-1}e_b\t\>= \sum_{b,c}( \int e_b  h)f^b f^c \<f^a,\varpi\pi_\eps (S^{-1}e_c\>=(\CF h)\chi_a.
\end{align*} \end{proof}

\begin{example}
The 3D calculus c.f. \cite{Woronowicz89} has left-invariant basic 1-forms $e_\pm, e_0$ with $e_\pm h =p^{|h|}h e_\pm$ and $e_0 h=p^{2|h|}h e_0$ where $p=q^{-m}$ and $|\ |$ denotes a grading with $a,c$ grade 1 and $b,d$ grade -1 as a $\Z_n$-grading of $c_p[SL_2]$. Correspondingly for $\cc_q[SL_2]$, we have a calculus with  $|X|=0, |t|=|Y|=2$ 
and one  can compute
\[
\extd X =q^{-m} te_-, \quad \extd t= (1+q)\big(q(q^{-m}-q^m)tYe_- + te_0 \big),\]
\[\extd Y =(q^{-1}-1)^{-1}e_+ + (1+q)Ye_0 +q(q^{-m}-q^m)Y^{2}e_-, 
\]
which implies on a general monomial basis element that 
\begin{align*}
\extd (X^i t^j Y^k)=&(q^{-1}-1)^{-1}[k]_{q}X^{i}t^jY^{k-1}e_+ + (1+q)[j+k]_{q^2}X^i t^j Y^k e_0\\
&~+\Big(q(q^{-m}-q^m)[2j+k]_{q}X^i t^j Y^{k+1} + [i]_{q^{-1}}q^{-m+j+k}X^{i-1}t^{j+1}Y^k\Big)e_-.
\end{align*}
We determine $\chi_a\in u_{p}(sl_2)$ from $\langle X^i t^j Y^k, \chi_a \rangle=\epsilon(\partial^a (X^i t^j Y^k))$ with the result
\[\chi_+=\sum\limits_{j=0}^{n-1}\delta_{j}(K)E = \sum\limits_{i,j=0}^{n-1}\dfrac{(q^{-1}-1)^{-1} q^{-ij}}{n}K^i E = \dfrac{E}{(q^{-1}-1)},\]
\[\chi_0=\sum\limits_{j=0}^{n-1}(1+q)[j]_{q^{2}}\delta_{j}(K)=\sum\limits_{i,j=0}^{n-1}\dfrac{q^{-ij}(1-q^{2j})}{n(1-q)}K^{i}=\dfrac{1-K^2}{1-q}, \]
\[\chi_-= \sum\limits_{j=0}^{n-1}q^{-m+j}F \delta_{j}(K)=\sum\limits_{i,j=0}^{n-1}\frac{q^{-m}q^{j-ij}}{n} FK^i =q^{-m}FK.\]
These are versions of similar elements found for $\C_q[SU_2]$ with real $q$ in \cite{Woronowicz89}.
\end{example}

\section{Construction of $\cu_q(sl_3)$ and $\cc_q[SL_3]$ by (co)double bosonisation}\label{SecSL3} 
As mentioned in the introduction, double bosonisation can in principle be used iteratively to construct all the $u_q(\mathfrak{g})$  \cite{db2,Primer} and hence now, by making the corresponding co-double bosonisation at each step, an appropriate dual $c_q[G]$. The quantum-braided planes and their duals adjoined at each step generally have a more straightforward duality pairing given by braided factorial operators, see \cite{Ma:Hod}. Here we find
\[ \cu_q(sl_3)=\cc_q^{2}\lbiprod \widetilde{\cu_q(sl_{2})}\rbiprod \cc_q^{2}=\cc_q^{2}\lbiprod (\cc_q^{1}\lbiprod \C\Z^2_n \rbiprod \cc_q^{1})\rbiprod \cc_q^{2}\]
for certain $n$-th roots of unity, and a parallel result for $\cc_q[SL_3]$. The former was explained for generic $q$ in \cite{db} but at roots of unity we need to be much more careful.

\subsection{Construction of $\cu_q(sl_3)$ from $\cu_q(sl_2)$}\label{upsl3}
The quantum group $u_q(sl_3)$ in more or less standard conventions is generated by $E_i,F_i,K_i$ for $i=1,2$, with, c.f.  \cite{Jan}, 
\[E_i^n=F_i^n=0, \quad K_i^n=1,\]
\[K_iK_j=K_j K_i, \quad E_i K_j = q^{a_{ij}}K_i E_j, \quad K_i F_j = q^{a_{ij}}F_j K_i, \quad [E_i, F_j]=\delta_{ij}(K_i-K_i^{-1}),\]
\[\Delta K_i=K_i\otimes K_i, \quad \Delta E_i=E_i\otimes K_i + 1\otimes E_i, \quad \Delta F_i = F_i\otimes 1 + K_i^{-1}\otimes F_i,\]
where $a_{11}=a_{22}=2$ and $a_{12}=a_{21}=-1$. As before, we absorbed a factor $q-q^{-1}$ in the cross relation as a normalisation of $E_i$. We also require the $q$-Serre relations
\[ E_i^2E_j-(q+q^{-1})E_iE_jE_i+E_jE_i^2=0, \quad F_i^2F_j-(q+q^{-1})F_iF_jF_i+F_jF_i^2=0\]
for $i\ne j$. Note that  $u_q(sl_2)$ appears as a sub-Hopf algebra generated by $E_1,F_1,K_1$. 

Let $q$ be a primitive $n$-th root of unity with $n=2m+1$ and $p=q^{-m}=q^{1\over 2}$. Let $B=\cc_q^2$ be the algebra generated by $e_1,e_2$ with relation $e_2e_1=q^{-m}e_1e_2$ in the category of right $\cu_q(sl_2)$-modules. The canonical left-action of $\cu_q(sl_2)$ on $B$ is given by
\begin{align}\label{u_q(sl_2) right-action on c_q^2}
\begin{split}
\left( e_1 \atop e_2 \right) \lhd K &= \langle K, \begin{pmatrix}
a & b \\ c & d
\end{pmatrix} \rangle\left( e_1 \atop e_2 \right) = \begin{pmatrix}
q^{-m} & 0\\ 0 & q^{m}
\end{pmatrix}\left( e_1 \atop e_2 \right),\\
\left( e_1 \atop e_2 \right) \lhd E &= \langle E, \begin{pmatrix}
a & b \\ c & d
\end{pmatrix} \rangle\left( e_1 \atop e_2 \right) = \begin{pmatrix}
0 & 0\\ \lambda & 0
\end{pmatrix}\left( e_1 \atop e_2 \right), \\
\left( e_1 \atop e_2 \right) \lhd F &= \langle F, \begin{pmatrix}
a & b \\ c & d
\end{pmatrix} \rangle\left( e_1 \atop e_2 \right) = \begin{pmatrix}
0 & 1\\ 0 & 0
\end{pmatrix}\left( e_1 \atop e_2 \right),
\end{split}
\end{align}
where $\lambda = q^{m}-q^{-m}$. The duality between $\cu_q(sl_2)$ and $c_{q^{-m}}[SL_2]$ is the standard one when the former is identified with $u_{q^{-m}}(sl_2)$, or can be obtained from  Corollary~\ref{dual basis}. 

\begin{lemma}\label{braided-plane c_q^2}
Let $q$ be a primitive $n$-th root of 1 with $n=2m+1$ such that $\beta^2=3$ has a solution mod $n$. Let $H=\widetilde{\cu_{q}(sl_2)}=\cu_q(sl_2)\otimes \C_q[g]/(g^n-1)$, and let $g$ act on $e_i$ by
\[e_i\lhd g=q^{m\beta}e_i.  \]
Then  $\cc_q^2$ is a braided-Hopf algebra in the braided category of right $H$-modules with
\[e_1^n=e_2^n=0,\quad e_2e_1=q^{-m}e_1e_2, \quad \underline{\Delta}(e_i)=e_i\otimes 1 + 1 \otimes e_i, \quad \underline{\epsilon}(e_i)=0, \quad \underline{S}(e_i)=-e_i,\]
\[\Psi(e_i\otimes e_i)=q e_i \otimes e_i, \quad\Psi(e_1 \otimes e_2)=q^{-m}e_2\otimes e_1, \quad \Psi(e_2\otimes e_1)=q^{-m}e_1\otimes e_2 + (q-1) e_2 \otimes e_1.\]
 \end{lemma}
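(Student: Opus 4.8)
The plan is to verify directly that $\cc_q^2$ with the stated structure maps is a braided-Hopf algebra in $\CM_H$, proceeding through three layers: (i) it is an object of $\CM_H$; (ii) the product, unit, coproduct and counit are morphisms in $\CM_H$, i.e.\ $\cc_q^2$ is an algebra and coalgebra in the category; (iii) $\underline\Delta$ is an algebra map to the braided tensor product algebra, so that we genuinely have a braided-Hopf algebra, and the stated $\underline S$ is the antipode.

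First I would record the right $H$-action. Since $H=\cu_q(sl_2)\otimes \C_q[g]/(g^n-1)$, a module is a $\cu_q(sl_2)$-module together with a commuting action of the grouplike $g$; on $e_i$ the action is (\ref{u_q(sl_2) right-action on c_q^2}) together with $e_i\lhd g=q^{m\beta}e_i$. The braiding $\Psi=\Psi_R$ is computed from the quasitriangular structure $\CR$ of $H$, which is $\CR_{\cu_q(sl_2)}$ tensored with the standard $\CR_g=\frac1n\sum_{a,b}q^{-\beta^2 ab}g^a\otimes g^b$ (using $\beta^2\equiv 3$ so that the $g$-grading contributes the `$-1$ off-diagonal' part of the $A_2$ Cartan matrix). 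Evaluating $\Psi_R(e_i\otimes e_j)=e_j\lhd\CR^\o\otimes e_i\lhd\CR^\t$ on the four cases: the $\cu_q(sl_2)$ part contributes via the $F\otimes E$ term of $\CR_{\cu_q(sl_2)}$ only when acting $e_2\otimes e_1$ (since $e_i\lhd E$ is nonzero only on $e_2\mapsto\lambda e_1$ and $e_i\lhd F$ only on $e_2\mapsto e_1$), giving the extra $(q-1)e_2\otimes e_1$ summand; the $K^a\otimes K^b$ and $g^a\otimes g^b$ diagonal parts give the scalars $q$ on $e_i\otimes e_i$ and $q^{-m}$ on the mixed terms. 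I would check this matches the displayed $\Psi$, in particular that $\lambda\cdot(\text{normalisation})\cdot(q-1)$ works out — this is the place where the specific exponents $\pm m$ and the value $\lambda=q^m-q^{-m}$ must conspire correctly, and I expect it to be the most error-prone bookkeeping.

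Next, for $\cc_q^2$ to be an algebra in $\CM_H$ the product must be a morphism, which forces the relation $e_2e_1=q^{-m}e_1e_2$ to be consistent with $\Psi$: concretely one checks $e_1^n=e_2^n=0$ and $e_2e_1-q^{-m}e_1e_2$ generate a two-sided ideal stable under the coaction/action, i.e.\ the quadratic algebra is $H$-covariant; covariance of the relation reduces to $e_i\lhd h$ respecting it for $h=E,F,K,g$, a short computation. Then I would check $\underline\Delta$ is coassociative and counital (immediate, since on generators it is the additive `primitive' coproduct, extended multiplicatively) and is a morphism in $\CM_H$ (again forced by the additive form on primitives). The crucial step (iii) is that $\underline\Delta$ is an algebra homomorphism into the braided tensor product $\cc_q^2\,\underline\otimes\,\cc_q^2$, whose product uses $\Psi$ in the middle: $(\underline\Delta\cdot)(e_i\otimes e_j)=(\cdot\otimes\cdot)(\id\otimes\Psi\otimes\id)(\underline\Delta e_i\otimes\underline\Delta e_j)$. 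Testing on the defining relation $e_2e_1-q^{-m}e_1e_2$, one expands $\underline\Delta(e_2)\underline\Delta(e_1)-q^{-m}\underline\Delta(e_1)\underline\Delta(e_2)$ in the braided tensor product; the cross terms involve $\Psi(e_2\otimes e_1)$ and $\Psi(e_1\otimes e_2)$, and the identity $\Psi(e_1\otimes e_2)=q^{-m}e_2\otimes e_1$ together with $\Psi(e_2\otimes e_1)=q^{-m}e_1\otimes e_2+(q-1)e_2\otimes e_1$ is exactly what makes everything cancel — this is the genuine content and I would carry it out explicitly on generators, then invoke that a bialgebra structure on a quadratic algebra is determined by its values on generators. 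Finally $\underline S(e_i)=-e_i$ extended by (\ref{braidedS}) gives the antipode; one verifies $\cdot(\underline S\otimes\id)\underline\Delta=\underline\eta\,\underline\eps=\cdot(\id\otimes\underline S)\underline\Delta$ on $e_i$ (trivial for primitives) and that it is well-defined on the relation, using $\underline S\circ\cdot=\cdot\circ\Psi\circ(\underline S\otimes\underline S)$.

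The main obstacle is the consistency of step (iii) with the nilpotency $e_i^n=0$: one must confirm that $\underline\Delta$ descends to the quotient by $(e_1^n,e_2^n,e_2e_1-q^{-m}e_1e_2)$, i.e.\ that $\underline\Delta(e_i^n)=0$ in the braided tensor square. By the braided binomial theorem $\underline\Delta(e_i^n)=\sum_r\binom{n}{r}_q e_i^r\otimes e_i^{n-r}$ with the relevant $q$-binomial being evaluated at the self-braiding eigenvalue $q$; since $q$ is a primitive $n$-th root of unity the $q$-binomials $\binom{n}{r}_q$ vanish for $0<r<n$, and the surviving terms $e_i^n\otimes1$, $1\otimes e_i^n$ are already zero. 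I expect this and the $\Psi$-computation from $\CR$ (matching exponents of $q$) to be the only real subtleties; the rest is routine covariance checking analogous to the braided-line case already treated in Theorem~\ref{cdbthm}.
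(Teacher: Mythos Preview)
Your overall strategy --- verify directly that $\cc_q^2$ with the stated structure maps is a braided-Hopf algebra in $\CM_H$ --- is sound, but two points need fixing. First, the quasitriangular structure on the central factor is the \emph{standard} one $\CR_g=\frac1n\sum_{s,t}q^{-st}g^s\otimes g^t$; there is no $\beta^2$ in the exponent. The parameter $\beta$ enters only through the action $e_i\lhd g=q^{m\beta}e_i$, so that $\CR_g$ contributes the uniform scalar $q^{m^2\beta^2}$ to every $\Psi(e_i\otimes e_j)$. Second, and relatedly, you never pin down where the hypothesis $\beta^2\equiv 3$ is actually used: it is precisely the condition $m^2\beta^2\equiv m(m-1)\pmod n$ that makes this scalar combine with the $\cu_q(sl_2)$ R-matrix to give the displayed $\Psi$ (e.g.\ $\Psi(e_i\otimes e_i)=q\,e_i\otimes e_i$ rather than some other power). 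Your remark about ``the $-1$ off-diagonal part of the $A_2$ Cartan matrix'' misidentifies the role of $\beta$: the off-diagonal structure comes from the $\cu_q(sl_2)$ action (\ref{u_q(sl_2) right-action on c_q^2}); $\beta$ only fixes the overall normalisation.

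The paper's proof runs the other way and is considerably shorter: it takes as known that $\cc_q^2$ with the braiding displayed in the statement \emph{is} a braided-Hopf algebra (this is the standard quantum-braided plane, cf.\ \cite{EJB : bar}), and reduces the lemma to checking that $\CR_H=\CR_{\cu_q(sl_2)}\CR_g$ actually produces that braiding, i.e.\ $\Psi(e_i\otimes e_j)=q^{m^2\beta^2}(e_i\otimes e_j)\lhd\CR_{\cu_q(sl_2)}$ matches the stated formulae iff $\beta^2\equiv3$. Your direct-verification route (your step (iii), the braided bialgebra axiom and the $q$-binomial argument for $e_i^n=0$) buys independence from that background at the cost of redoing what is standard; once you correct the $\CR_g$ formula and make explicit the normalisation computation that consumes the hypothesis $\beta^2=3$, your argument goes through.
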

\begin{proof} The quasitriangular structure of $\widetilde{\cu_q(sl_2)}$ is given by $\CR_{\cu_q(sl_2)}\CR_g$, where $\CR_g=\frac{1}{n}\sum\limits_{s,t=0}^{n-1}q^{-st}g^s\otimes g^t$ and $\CR_{\cu_q(sl_2)}$ is given in Lemma \ref{dboslem}. Thus, we can compute that
	\[\Psi(e_i\otimes e_j)=q^{m^2\beta^2}(e_i\otimes e_j)\lhd \CR_{\cu_q(sl_2)}.\]
This braiding is equal to the correctly normalised braiding in the statement (as needed for $\underline\Delta$ to extend as a homomorphism to the braided tensor product algebra) iff $m^2\beta^2=m(m-1)$ mod $n$, or $m\beta^2=m-1$ since any $m>0$ is invertible mod $n$ (this is true for $m=1$ and if $m>1$ then $m$ and $2m+1$ are coprime).  Thus the condition for $\cc_q^2$ to form a braided-Hopf algebra in the category of $\widetilde{\cu_q(sl_2)}$-modules by an action of the stated form is $m(\beta^2-1)=-1=2m \mod n$, or $\beta^2= 3 \mod n$. Some version of this lemma was largely in \cite{EJB : bar},  working directly with $p=q^{-m}$. 
\end{proof}
 
Here $\beta=0$ is only possible for $m=1$, i.e.,  $n=3$.  In this case $\cc_q^2$ is already a braided-Hopf algebra in the category of $\cu_q(sl_2)$-modules without a central extension being needed. Otherwise, the least $n$ satisfying the condition is $n=11$ with $\beta=5$. For $n$ prime,  $\beta$ exists if and only if $n=\pm 1 \mod 12$, see \cite{NumberTheory}.

The dual $B^*=(\cc_q^2)^* \in {}_{H}M$ is generated by $f_1,f_2$ satisfying the same relations $f_2f_1=q^{-m}f_1f_2$ and additive braided coproduct as $B$ but with the  left action
\begin{align}\label{u_q(sl_2) left-action on c_q^2}
\begin{split}
K\rhd \begin{pmatrix}
f_1 & f_2
\end{pmatrix} &= \begin{pmatrix}
f_1 & f_2
\end{pmatrix} \begin{pmatrix}
q^{-m} & 0\\ 0& q^m
\end{pmatrix}, \quad g \rhd f_i = q^{m\beta} f_i,\\
E\rhd \begin{pmatrix}
f_1 & f_2
\end{pmatrix} &= \begin{pmatrix}
f_1 & f_2
\end{pmatrix}\begin{pmatrix}
0 & 0 \\ \lambda & 0
\end{pmatrix}, \quad F\rhd \begin{pmatrix}
f_1 & f_2
\end{pmatrix} = \begin{pmatrix}
f_1 & f_2
\end{pmatrix}\begin{pmatrix}
0 & 1 \\ 0 & 0
\end{pmatrix}. 
\end{split}
\end{align}

\begin{lemma}\label{exp of braided-plane} The quantum-braided planes $\cc_q^2$ and $(\cc_q^2)^*$ in Lemma~\ref{braided-plane c_q^2} are dually paired by $\langle e_1^r e_2^s,  f_1^{r'} f_2^{s'}\rangle = \delta_{r,r'}\delta_{s,s'}[r]_{q}![s]_{q}! $. \end{lemma}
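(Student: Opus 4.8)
The plan is to realise the asserted map as the canonical duality pairing of the finite‑dimensional braided group $B=\cc_q^2$ with its dual $B^{*}=(\cc_q^2)^{*}$ and then to evaluate it on the two PBW monomial bases by turning products into iterated coproducts. By the general theory recalled in Section~\ref{Sec2} this pairing is the unique braided‑bialgebra pairing extending $\langle e_i,f_j\rangle=\delta_{ij}$ on generators; in particular it satisfies $\langle ab,\phi\rangle=\langle a\otimes b,\underline{\Delta}\phi\rangle$, $\langle a,\phi\psi\rangle=\langle\underline{\Delta} a,\phi\otimes\psi\rangle$, $\langle 1,\phi\rangle=\underline{\epsilon}(\phi)$ and $\langle a,1\rangle=\underline{\epsilon}(a)$ (with the side conventions of Section~\ref{Sec2}). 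Writing $N=r+s$ and applying the product–coproduct identity $N-1$ times to the word $e_1^r e_2^s=e_1\cdots e_1\,e_2\cdots e_2$ gives
\[ \langle e_1^r e_2^s,\ f_1^{r'}f_2^{s'}\rangle=\big\langle\, e_1^{\otimes r}\otimes e_2^{\otimes s},\ \underline{\Delta}^{(N-1)}(f_1^{r'}f_2^{s'})\,\big\rangle, \]
the right‑hand pairing being taken slot by slot, so that the genuinely mixed braiding $\Psi(e_2\otimes e_1)=q^{-m}e_1\otimes e_2+(q-1)e_2\otimes e_1$ of $B$ never enters and all the work is moved to the $B^{*}$ side, where the generators are additive.

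Next I would compute the iterated coproduct. Since $\underline{\Delta}^{(N-1)}\colon B^{*}\to (B^{*})^{\underline{\otimes}N}$ is an algebra map and each $f_j$ is primitive, $\underline{\Delta}^{(N-1)}f_j=\sum_{i=1}^{N}f_j^{(i)}$, where $f_j^{(i)}$ has $f_j$ in the $i$‑th tensor slot and $1$ elsewhere, so
\[ \underline{\Delta}^{(N-1)}(f_1^{r'}f_2^{s'})=\Big(\sum_{i=1}^{N}f_1^{(i)}\Big)^{\! r'}\Big(\sum_{i=1}^{N}f_2^{(i)}\Big)^{\! s'} \]
in the braided tensor product algebra $(B^{*})^{\underline{\otimes}N}$. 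Pairing this slot‑wise against $e_1^{\otimes r}\otimes e_2^{\otimes s}$, and using $\langle e_i,f_j\rangle=\delta_{ij}$, $\langle e_i,1\rangle=0$ together with $\langle e_i,f_jf_k\rangle=\langle\underline{\Delta}e_i,f_j\otimes f_k\rangle=0$ (primitivity), the only monomials that survive are those placing exactly one $f_1$ in each of the slots $1,\dots,r$ and exactly one $f_2$ in each of the slots $r+1,\dots,r+s$. This immediately forces $r'=r$ and $s'=s$ and produces the factors $\delta_{rr'}\delta_{ss'}$.

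It remains to count the braiding coefficient of the surviving contribution. A surviving term of $\big(\sum_i f_1^{(i)}\big)^{r}$ is $f_1^{(\sigma(1))}\cdots f_1^{(\sigma(r))}$ for $\sigma\in S_r$; reordering it to the standard form $f_1^{(1)}\cdots f_1^{(r)}$ using $f_1^{(j)}f_1^{(i)}=q\,f_1^{(i)}f_1^{(j)}$ for $i<j$ — a direct consequence of $\Psi(f_1\otimes f_1)=q\,f_1\otimes f_1$ and the braided‑tensor‑product multiplication of Section~\ref{Sec2} — gives a factor $q^{\mathrm{inv}(\sigma)}$, and $\sum_{\sigma\in S_r}q^{\mathrm{inv}(\sigma)}=[r]_{q}!$. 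The $f_2$‑block similarly contributes $[s]_{q}!$, and, crucially, because every occupied $f_1$‑slot has index strictly smaller than every occupied $f_2$‑slot, multiplying the $f_1$‑block by the $f_2$‑block in $(B^{*})^{\underline{\otimes}N}$ requires no $f_1$–$f_2$ braiding, so the two blocks simply multiply. Since the slot‑wise evaluation of $f_1^{(1)}\cdots f_1^{(r)}f_2^{(r+1)}\cdots f_2^{(r+s)}$ against $e_1^{\otimes r}\otimes e_2^{\otimes s}$ is $\prod_i\langle e_1,f_1\rangle\,\prod_i\langle e_2,f_2\rangle=1$, we conclude $\langle e_1^r e_2^s,f_1^{r'}f_2^{s'}\rangle=\delta_{rr'}\delta_{ss'}[r]_{q}![s]_{q}!$.

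The step needing the most care is this bookkeeping of braiding coefficients: one must check that no stray power of $q$ slips in when passing to the iterated pairing (handled by doing everything on the additive $B^{*}$ side), that the surviving $f_1$‑ and $f_2$‑blocks decouple (which holds exactly because all $f_1$‑slots precede all $f_2$‑slots), and that the sorting factor is $q^{+\mathrm{inv}(\sigma)}$ rather than $q^{-\mathrm{inv}(\sigma)}$ in the chosen conventions — the latter being pinned down by the braiding of Lemma~\ref{braided-plane c_q^2}. This computation is the quantum‑plane instance of the braided factorial/braided‑integer operators of \cite{Ma:Hod}, and one could alternatively invoke those directly; I would nevertheless include the elementary argument above to keep Section~\ref{SecSL3} self‑contained.
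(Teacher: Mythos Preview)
Your argument is correct, but it proceeds along a different route from the paper's own proof. The paper applies the product--coproduct duality only once, splitting $e_1^r e_2^s$ as $e_1^r\cdot e_2^s$ and then inserting the closed-form braided coproduct
\[
\underline{\Delta}(f_1^{r'}f_2^{s'})=\sum_{r_1,s_1}\begin{bmatrix}r'\\r_1\end{bmatrix}_q\begin{bmatrix}s'\\s_1\end{bmatrix}_q q^{-ms_1(r'-r_1)}\,f_1^{r_1}f_2^{s_1}\otimes f_1^{r'-r_1}f_2^{s'-s_1},
\]
after which the only surviving summand is $f_1^{r'}\otimes f_2^{s'}$ and one is reduced to the braided-line pairing $\langle e_i^r,f_i^{r'}\rangle=\delta_{r,r'}[r]_q!$. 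Your approach instead iterates the duality all the way down to single generators and extracts the $q$-factorials as the inversion-generating function $\sum_{\sigma}q^{\mathrm{inv}(\sigma)}$; this is more elementary in that it avoids quoting the closed-form coproduct, and it makes the ``braided factorial'' interpretation explicit, at the cost of a longer bookkeeping argument. The delicate point you flagged---that no mixed braiding contaminates the count---is indeed the crux: it works because, in the product $\bigl(\sum_i f_1^{(i)}\bigr)^{r'}\bigl(\sum_i f_2^{(i)}\bigr)^{s'}$, every $f_1$ precedes every $f_2$, so only $\Psi(f_1\otimes f_1)$, $\Psi(f_2\otimes f_2)$ and $\Psi(f_1\otimes f_2)=q^{-m}f_2\otimes f_1$ are ever invoked, and all three are diagonal. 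The non-diagonal braiding $\Psi(f_2\otimes f_1)=q^{-m}f_1\otimes f_2+(q-1)f_2\otimes f_1$ is never needed, which is what makes your slot-content analysis go through cleanly.
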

\begin{proof}
It is not hard to see that $\langle e_i^r, f_i^{r'} \rangle = \delta_{r,r'}[r]_{q}! $ and this implies that
\begin{align*}
 \langle e_1^r e_2^s,  f_1^{r'} f_2^{s'}\rangle  =& \langle e_1^r \otimes e_2 ^s , \sum\limits_{r_1=0}^{r'} \sum\limits_{s_1=0}^{s'}\begin{bmatrix} 
r' \\ r_1 \end{bmatrix}_{q}\begin{bmatrix}
s' \\ s_1 \end{bmatrix}_{q} q^{-ms_1(r'-r_1)}f_1^{r_1} f_2^{s_1} \otimes f_1^{r'-r_1}f_2^{s'-s_1} \rangle\\
&=\langle e_1^r \otimes e_2 ^s, f_1^{r'} \otimes f_2^{s'} \rangle = \delta_{r,r'}\delta_{s,s'}[r]_{q}![s]_{q}!.
\end{align*}
\end{proof}

In the double bosonisation, we read the generators $e_1,e_2$ of the quantum-braided plane $B=\cc_q^2$ as $E_{12}$ and $E_2$ respectively. Similarly, the generators $f_1,f_2$ of its dual quantum-braided plane $(\cc_q^2)^*$ are read as $F_{12},F_2$ respectively. Also, we read the generators $E,F,K$ of $\cu_q(sl_2)$ as $E_1, F_1$ and $K_1$ so that
\[E_1^n=F_1^n=0, \quad K_1^n=1, \quad K_1E_1K_1^{-1}=q^{-1}E_1, \quad K_1F_1K_1^{-1}=qF_1, \quad [E_1,F_1]=K_1-K_1^{-1}.\]

\begin{lemma}\label{dbos u_q(sl_3)} Suppose the setting of Lemma~\ref{braided-plane c_q^2} with $n$ odd and $\beta^2=3$ solved mod $n$.
\begin{enumerate}
\item The double bosonisation of $\cc_q^2$, which we denote $\cu_q(sl_3)$, is generated by $E_i,F_i,K_1,g$ for $i=1,2$, with $E_1,F_1,K_1$ generating $\cu_q(sl_2)$ as a sub-Hopf algebra, and has 
\[E_2K_1=q^{m} K_1E_2, \quad E_2g=q^{m\beta}gE_2,\quad K_1F_2=q^{m}F_2 K_1, \quad gF_2=q^{m\beta}F_2g, \]
\[[E_1,F_2]=[E_2,F_1]=0, \quad [E_2,F_2]=K_1^{m}g^{m\beta}-K_1^{-m}g^{-m\beta}, \]
\[ \{E_i^2,E_j\}=(q^m+q^{-m})E_iE_jE_i,\quad \{F_i^2,F_j\}=(q^m+q^{-m})F_iF_jF_i;\quad i\ne j,\]
\[\Delta E_2 = 1\otimes E_2 + E_2\otimes K_1^{m}g^{m\beta}, \quad \Delta F_2 = F_2 \otimes 1 + g^{-m\beta}K_1^{-m}\otimes F_2,\]
\[\CR_{\cu_q(sl_3)}=\dfrac{1}{n^2}\sum\limits \dfrac{(-1)^{r+v+w}q^{vw-st-ab}}{[r]_{q^{-1}}![v]_{q^{-1}}![w]_{q^{-1}}!} F_{12}^v F_2^w F_1^r K_1^s g^{a}\otimes E_{12}^v E_2^w E_1^r K_1^t g^{b},\]
where we sum over $r,s,a,t,b,v,w$ from $0$ to $n-1$ and 
\[ E_2E_1=q^m E_1E_2+ \lambda E_{12},\quad F_1F_2=q^{-m}F_2F_1 + F_{12};\quad \lambda=q^m-q^{-m}.\]
\item If $n>3$ and is not divisible by $3$ then $\cu_q(sl_3)$ is isomorphic to $u_{q^{-m}}(sl_3)$.
\end{enumerate}
\end{lemma}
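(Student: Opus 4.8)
The plan is to obtain part (1) as a direct specialisation of the double bosonisation Theorem~\ref{Double Bosonisation} (together with the bosonisation coproduct of Lemma~\ref{boson}) to $H=\widetilde{\cu_q(sl_2)}=\cu_q(sl_2)\otimes\C_q[g]/(g^n-1)$ with quasitriangular structure $\CR_H=\CR_{\cu_q(sl_2)}\CR_g$ and $B=\cc_q^2$. First I would substitute the generators $f_i\in B^{*\underline{\mathrm{cop}}}$ and $e_j\in B$ into the cross-relation formula of Theorem~\ref{Double Bosonisation}. Since $e_j$ and $f_j$ are braided-primitive with braided antipode acting by $-1$ on generators, the Sweedler legs collapse, and the sums in $\CR_{\cu_q(sl_2)}$ and $\CR_g$ are killed by the eigenvector properties $e_i\lhd K_1\in\C e_i$, $e_i\lhd g=q^{m\beta}e_i$ together with $e_1\lhd F=e_2$, $e_2\lhd E=\lambda e_1$, and $\beta^2\equiv 3$. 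This yields $[E_1,F_2]=[E_2,F_1]=0$, the commutation of $E_2,F_2$ with $K_1,g$ at the stated powers of $q$, and $[E_2,F_2]=K_1^mg^{m\beta}-K_1^{-m}g^{-m\beta}$, so that $K_2:=K_1^mg^{m\beta}$ emerges as the Cartan partner of $E_2$ (the normalisation $\langle e_2,f_2\rangle=1$ from Lemma~\ref{exp of braided-plane} guarantees there is no spurious scalar). The coproducts come the same way: acting the left leg of $\CR_H$ on $e_2$ yields $e_2\otimes K_1^mg^{m\beta}$, so the bosonisation formula of Lemma~\ref{boson} gives $\Delta E_2=1\otimes E_2+E_2\otimes K_2$, and dually $\Delta F_2=F_2\otimes 1+g^{-m\beta}K_1^{-m}\otimes F_2$.

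Next I would extract the remaining relations. The quadratic relation $e_2e_1=q^{-m}e_1e_2$ holding inside $\cc_q^2$ (and its dual inside $(\cc_q^2)^*$) passes to $\cu_q(sl_3)$ as a relation among $E_{12}:=e_1$ and $E_2:=e_2$, while in the cross product $H\rbiprod B$ one reads off $E_2E_1=(1\otimes e_2)(E_1\otimes 1)=E_1\otimes(e_2\lhd K_1)+1\otimes(e_2\lhd E_1)=q^mE_1E_2+\lambda E_{12}$, and dually $F_1F_2=q^{-m}F_2F_1+F_{12}$. Combining these with $E_{12}E_2=q^mE_2E_{12}$ (again from $e_2e_1=q^{-m}e_1e_2$) gives the $q$-Serre relations $\{E_i^2,E_j\}=(q^m+q^{-m})E_iE_jE_i$ and likewise for the $F$'s. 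For the quasitriangular structure I would use $\CR^{\mathrm{new}}=\overline{\mathrm{exp}}\cdot\CR_H$ with $\overline{\mathrm{exp}}=\sum_{v,w}\frac{f_1^vf_2^w}{[v]_q![w]_q!}\otimes\underline S(e_1^ve_2^w)$, the dual bases being those of Lemma~\ref{exp of braided-plane}; a braided-antipode identity for $\cc_q^2$ of the type \eqref{Xop} rewrites each $\underline S(e_1^ve_2^w)$ as an ordinary normal-ordered monomial up to a sign and a power of $q$, and multiplying out against $\CR_H=\CR_{\cu_q(sl_2)}\CR_g$ (with $\CR_{\cu_q(sl_2)}$ as in Lemma~\ref{dboslem}) and reordering produces the stated double sum. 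The underlying vector space $B^{*\underline{\mathrm{cop}}}\otimes H\otimes B$ has dimension $n^2\cdot n^4\cdot n^2=n^8$ by construction, so no separate PBW argument is needed.

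For part (2), put $p=q^{-m}$ and define $\psi:u_p(sl_3)\to\cu_q(sl_3)$ by $E_i\mapsto E_i$, $F_i\mapsto F_i$, $K_1\mapsto K_1$, $K_2\mapsto K_1^mg^{m\beta}$. Using $(n-1)^2\equiv 1$, $\beta^2\equiv 3\bmod n$ and the relations of part (1), one verifies that the images satisfy $E_iK_j=p^{a_{ij}}K_jE_i$, $K_iF_j=p^{a_{ij}}F_jK_i$, $[E_i,F_j]=\delta_{ij}(K_i-K_i^{-1})$, the $p$-Serre relations, and the coalgebra relations, so $\psi$ is a well-defined Hopf algebra map. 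Because $3\nmid n$ we have $\gcd(3,n)=1$, hence from $\beta^2\equiv 3$ also $\gcd(\beta,n)=1$, and since $\gcd(m,n)=\gcd(m,2m+1)=1$ the integer $m\beta$ is invertible mod $n$; therefore $g=(K_1^{-m}K_2)^{(m\beta)^{-1}}$ (inverse taken mod $n$) lies in the image of $\psi$. Together with $E_{12}=\lambda^{-1}(E_2E_1-q^mE_1E_2)$ and its $F$-analogue this makes $\psi$ surjective, and as both Hopf algebras have dimension $n^8$ it is an isomorphism. (The excluded cases $3\mid n$ and $n=3$ are exactly those where $g$, resp.\ the forced identification $K_2=K_1$, cannot be removed, matching the $m=1$ phenomenon noted in the introduction.)

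The essentially routine but lengthy step is part (1): running the cross-relation formula of Theorem~\ref{Double Bosonisation} through the two-factor structure of $\CR_H$, and in particular assembling the quasitriangular structure, where the braided antipode on $\cc_q^2$ and the $q$-factorial bookkeeping inside $\overline{\mathrm{exp}}$ must be reconciled with $\CR_{\cu_q(sl_2)}\CR_g$ and the resulting monomials normal-ordered using $e_2e_1=q^{-m}e_1e_2$. This, rather than the isomorphism in part (2), is where the main work lies.
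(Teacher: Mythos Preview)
Your proposal is correct and follows essentially the same route as the paper: you apply Theorem~\ref{Double Bosonisation} and Lemma~\ref{boson} directly to $H=\widetilde{\cu_q(sl_2)}$ and $B=\cc_q^2$, reading off the cross relations, coproducts and $q$-Serre relations from the actions and the braided-plane relation, and you assemble $\CR^{\mathrm{new}}$ from $\overline{\mathrm{exp}}\cdot\CR_H$ using the dual bases of Lemma~\ref{exp of braided-plane} together with a braided-antipode computation. The only minor difference is in part~(2): the paper writes down the explicit inverse $\phi:\cu_q(sl_3)\to u_{q^{-m}}(sl_3)$ with $\phi(g)=(K_1^{-m}K_2)^{1/(m\beta)}$, whereas you argue surjectivity plus equality of dimensions; these are equivalent, and your invertibility argument for $m\beta$ mod $n$ is exactly what underlies the paper's inverse.
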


\begin{proof}
(1) This is a direct computation using Theorem \ref{Double Bosonisation}. First, we have that $E_2h=h\o(E_2\lhd h\t)$ and $hF_2=(h\o \rhd F_2)h\t$ for all $h\in \widetilde{\cu_q(sl_2)}$ and using the correct actions mentioned above. Those not involving $E_{12},F_{12}$ are as listed, while two more are regarded in the statement as definitions of $E_{12},F_{12}$ in terms of the other generators. In this case the remaining cross relations 
\[E_{12}K_1 = q^{-m} K_1E_{12},\quad K_1F_{12}=q^{-m}F_{12}K_1,\quad E_{12}g=q^{m\beta}gE_{12},\quad gF_{12}=q^{m\beta}F_{12}g\]
are all empty and can be dropped. Similarly, the first two of 
\[ [ E_{12},F_1]=K_1^{-1}E_2,\quad  [E_1,F_{12}]=\lambda F_2K_1,\quad E_{12}E_1=q^{-m}E_1E_{12},\quad F_1F_{12}=q^{m}F_{12}F_1\]
are empty and can be dropped. The remaining two and the original quantum-braided plane relations $E_{12}E_2=q^mE_2E_{12}, F_{12}F_2=q^mF_2F_{12}$ are the four $q$-Serre relations stated for $i\ne i$. 
We next look at the cross relations between the two quantum-braided planes. For example,
\[[E_2,F_2]=\CR^{\t}\langle F_2, E_2 \lhd \CR^{\o} \rangle - \CR^{\mo}\langle \CR^{\mt}\rhd F_2, E_2 \rangle \]
Putting in the form of $\CR$ and $\CR^{-1}$ gives the stated cross relation. One similarly has
\[ [E_{12}, F_2]=-E_1K_1^{m}g^{m\beta}, \quad [E_2, F_{12}]=\lambda g^{-m\beta}K_1^{-m}F_1,\quad [E_{12},F_{12}]= K_1^{-m}g^{m\beta}-K_1^{m} g^{-m\beta}\]
of which the first two are empty by a similar computation to the one above and the last is also empty by a more complicated calculation. In fact all these identities can be useful even though we do not include them in the defining relations. We also have
\begin{align*}
\Delta E_{2}&= 1\otimes E_{2} + \frac{1}{n^2}\sum\frac{(-1)^r}{[r]_{q^{-1}}!} q^{-st-ab}(E_{2} \lhd F_1^r K_1^s g^{a})\otimes E_1^r K_1^t g^{b}\\
&=1\otimes E_{2} + \frac{1}{n^2}\sum q^{-s(t-m)- a(b-m\beta)} E_{2}\otimes K_1^t g^{b} =1\otimes E_{2} + E_{2} \otimes K_1^{m}g^{m\beta}, 
\end{align*}
where we sum over $r,s,a,t,b$ from 0 to $n-1$. To compute $\Delta F_2$, we need
\[ \CR^{-1}=S\CR^{\o}\otimes \CR^{\t}=\frac{1}{n^2}\sum\frac{q^{-st- ab}}{[r]_{q^{-1}}!}g^{-a}K_1^{r-s}F_1^r \otimes E_1^r K_1^tg^{b}.\]
Only the first term contributes when acting on $F_2$, 
\begin{align*}
\Delta F_{2}&= F_{2}\otimes 1 + \frac{1}{n^2}\sum\frac{(-1)^r}{[r]_{q^{-1}}!}q^{-st-ab}g^{-a}K_1^{r-s}F_1^r\otimes E_1^rK_1^tg^{b}\rhd F_{2}\\
&= F_{2} \otimes 1 + \frac{1}{n^2}\sum q^{-t(s-m)-b(a-m\beta)}g^{-a}K_1^{-s}\otimes F_{2} =F_{2}\otimes 1 + g^{-m\beta}K_1^{-m}\otimes F_{2}
\end{align*}
and similarly for $\Delta E_2$. One also has
\[\Delta E_{12} = 1\otimes E_{12} + E_{12}\otimes K_1^{-m}g^{m\beta} - E_2\otimes E_1K_1^{m}g^{m\beta},\]
\[\Delta F_{12} = F_{12} \otimes 1 + g^{-m\beta}K_1^{m}\otimes F_{12} + \lambda g^{-m\beta}K_1^{-m}F\otimes F_2\]
which we did not state as $E_{12},F_{12}$ are not generators.  By Theorem \ref{Double Bosonisation} and Lemma \ref{exp of braided-plane}, the quasitriangular structure of $\cu_q(sl_3)$ is 
\begin{align}\label{quasitriangular uq(sl3)}
\CR_{\cu_q(sl_3)}=(\sum\limits^{n-1}_{v,w=0} \dfrac{F_{12}^v F_2^w}{[v]_q![w]_q!}\otimes \underline{S}(E_{12}^v E_2^w))\CR_{\cu_q(sl_2)}\CR_{g}, 
\end{align}
where $\CR_{\cu_q(sl_2)}\CR_{g}$ is explained in the proof of Lemma \ref{braided-plane c_q^2}. By (\ref{braidedS}) for the braided-antipode, we find
\[\underline{S}(E_{12}^v E_2^w)=(-1)^{v+w}q^{\frac{v(v-1)+w(w-1)}{2}+vw}E_{12}^w E_{2}^v,\]
so that (\ref{quasitriangular uq(sl3)}) becomes the expression stated.

(2) If $m>1$,  we define  $\varphi : u_{q^{-m}}(sl_3)\to \cu_q(sl_3)$ by 
\[ \varphi(E_i)=E_i,\quad \varphi(F_i)=F_i, \quad \varphi(K_1)=K_1,\quad \varphi(K_2) = K_1^{m}g^{m\beta}.\]
It is easy to see that $\varphi$ is an algebra and coalgebra map. In the other direction, when $m>1$,  $\beta$ is invertible mod $n$ iff $3$ is. We then define $\phi : \cu_q(sl_3) \to u_{q^{-m}}(sl_3)$ by
\[  \phi(E_i)=E_i,\quad \phi(F_i)=F_i, \quad \phi(K_1)=K_1,\quad \varphi(g)=(K_1^{-m}K_2)^{1\over m \beta},\]
which is clearly inverse to $\varphi$. 
\end{proof}

We again write $p=q^{-m}$ so that $\cu_{q}(sl_3)$ is isomorphic to $u_p(sl_3)$ under our assumptions, where $n=33$ and $\beta=6$ is the first case excluded. The double bosonisation construction also gives $\{F_{12}^{i_1}F_2^{i_2}F_1^{i_3}K_1^{i_4}g^{i_5}E_1^{i_6}E_{12}^{i_7}E_2^{i_8}\}$ as a basis of $\cu_{q}(sl_3)$. 

\begin{example}\label{dbos u_q(sl_3) at q^3=1}
As mentioned before, when $q$ is a primitive cubic root of unity i.e., when $\beta=0$, $\cc_q^2$ is already a braided-Hopf algebra in the category of $\cu_q(sl_2)$-modules without an extension needed. Then Theorem \ref{Double Bosonisation} gives us a quasitriangular Hopf algebra, which we denote $\cu_q'(sl_3)$, generated by $E_{i},F_{i},K_1$ with $i=1,2$ with the  relations and coproducts
\[E_iK_1=qK_1E_i, \quad K_1F_i=qF_iK_1,\quad  [E_i,F_j]=\delta_{i,j} (K_1-K_1^{-1}),\]
\[ \{E_i^2,E_j\}=(q+q^{-1})E_iE_jE_i,\quad \{F_i^2,F_j\}=(q+q^{-1})F_iF_jF_i;\quad i\ne j,\]
\[\Delta E_i=1\otimes E_i+E_i\otimes K_1 , \quad \Delta F_i=F_i\otimes 1+ K_1^{-1} \otimes F_i,\]
\[\CR_H=\dfrac{1}{9} \sum \dfrac{(-1)^{r+v+w} q^{vw-st}}{[r]_{q^{-1}}![v]_{q^{-1}}![w]_{q^{-1}}!} F_{12}^v F_2^w F_1^r K_1^s \otimes E_{12}^v E_2^w E_1^r K_1^t,\]
where the sum is over $r,s,t,v,w$ from 0 to 2. This $\cu_q'(sl_3)$ is not isomorphic to $u_{q^{-1}}(sl_3)$ since we do not have the generator $K_2$. However, the element $K_1^{-1}K_2$ is central and group-like in $u_{q^{-1}}(sl_3)$ and $\cu_q'(sl_3)\cong u_{q^{-1}}(sl_3)/\<K_1^{-1}K_2-1\>$. In addition, Lemma \ref{dbos u_q(sl_3)} still applies and $g$ is already group-like, and central when $\beta=0$. Therefore we have $\cu_{q}(sl_3)= \cu_q'(sl_3)\otimes \C_q[g]/(g^3-1)$ for $m=1$. 
\end{example}

\subsection{Construction of $\cc_q[SL_3]$ from $\cc_q[SL_2]$}\label{cpsl3} Recall, see e.g. \cite{Foundation}, that the coquasitriangular Hopf algebra $\C_{q}[SL_3]$ is generated by $\mathbf{t}=(t^i{}_j)$ for $i,j=1,2,3$, with matrix-form of  coproduct $\Delta \mathbf{t}=\mathbf{t}\otimes \mathbf{t}$, and for $i<k$, $j<l$,  the relations
\[[t^i{}_l, t^i{}_j]_q=0,\quad [t^k{}_j,t^i{}_j]_q=0, \quad [t^i{}_l, t^k{}_j]=0, \quad [t^k{}_l, t^i{}_l]=\lambda t^i{}_l t^k{}_j,\]
\[\mathrm{det}_q(\mathbf{t}):=t^1{}_1(t^2{}_2t^3{}_3-q^{-1}t^2{}_3t^3{}_2)-q^{-1}t^1{}_2(t^2{}_1t^3{}_3-q^{-1}t^2{}_3t^3{}_1)+q^{-2}t^1{}_3(t^2{}_1t^3{}_2-q^{-1}t^2{}_2t^3{}_1)=1,\]
where $[a,b]_q:=ba-qab$ and $\lambda = q-q^{-1}$. The reduced version is denoted by $c_q[SL_3]$ and has the additional relations
\[(t^i{}_j)^n=\delta_{ij}.\]

Throughout this section we limit ourselves to $q$ a primitive $n=2m+1$-th root of unity so that $\cc_q[SL_2]\cong c_{q^{-m}}[SL_2]$ according to Theorem~\ref{cdbthm}. Since we only consider this case, it will be convenient to use the isomorphism to define new generators $a,b,c,d$ of $\cc_q[SL_2]$ related to our previous ones by $X=bd^{-1}, t=d^{-2}$ and $Y=d^{-1}c/(q^m-q^{-m})$. Then  we can benefit from both the matrix form of coproduct on the new set and the dual basis feature of the original set.  We let $A =\widetilde{\cc_q[SL_2]}= \cc_q[SL_2]\otimes \C_{q}[\varsigma]/(\varsigma^n-1)$ be the central extension dual to $\widetilde{\cu_q(sl_2)}=\cu_q(sl_2)\otimes \C_{q}[g]/(g^n-1)$. Here  $\langle \varsigma, g \rangle=q$ and $\CR(\varsigma,\varsigma)=q$ is the coquasitriangular structure on the central extension factor. Let $B$ be a quantum-braided plane $\cc_q^2$ as in Lemma~\ref{braided-plane c_q^2} but viewed in the category of left comodules over $A$ with left coaction 
\begin{align}\label{braided-plane left coaction}
\Delta_L\begin{pmatrix}X_1 \\ X_2\end{pmatrix}=\begin{pmatrix}
\atild & \btild\\
\ctild & \dtild
\end{pmatrix}\otimes \begin{pmatrix}X_1 \\ X_2\end{pmatrix};\quad \begin{pmatrix}
\atild & \btild\\
\ctild & \dtild
\end{pmatrix}=\begin{pmatrix}
a& b \\ c& d
\end{pmatrix}\varsigma^{m\beta}, 
\end{align} 
where we now denote the generators $X_1,X_2$. In this case we will have
\[\Psi(X_i\otimes X_j)=q^{m^2\beta^2}R^j{}_k{}^i{}_l X_k \otimes X_l,\] 
where $R$ was given in  (\ref{RSL2}). We again require that $\beta^2=3 \mod n$ so that 
$q^{3m^2}R$ has the correct normalisation factor $q^{3m^2+m(m+1)}=q^{-m}$ in front of the 
matrix in (\ref{RSL2}), as needed to obtain a braided-Hopf algebra. One also has, c.f. \cite{Foundation}, 
\[\underline{\Delta}(X_1^r X_2^s) = \sum\limits_{r_1=0}^{r} \sum\limits_{s_1=0}^{s}\begin{bmatrix} 
r \\ r_1 \end{bmatrix}_{q}\begin{bmatrix}
s \\ s_1 \end{bmatrix}_{q} q^{-ms_1(r-r_1)}X_1^{r_1} X_2^{s_1} \otimes X_1^{r-r_1}X_2^{s-s_1}.\]
The dual $B^*$ was likewise explained in the previous section and is now taken with generators $Y_i$ and regarded in the category of right comodules over $A$ with
\begin{align}\label{braided-plane rigt coaction}
\Delta_R \begin{pmatrix}
Y_1 & Y_2
\end{pmatrix}=\begin{pmatrix}
Y_1 & Y_2
\end{pmatrix}\otimes\begin{pmatrix}
\atild & \btild\\
\ctild & \dtild\\
\end{pmatrix} .
\end{align}

\begin{theorem}\label{codbos cq[SL_3]} Let $n=2m+1$ such that $\beta^2=3$ is solved mod $n$. Let $A=\widetilde{\cc_q[SL_2]}=\cc_q[SL_2]\otimes \C_q[\varsigma]/(\varsigma^n-1)$ regarded with generators $\varsigma,\atild,\btild,\ctild,\dtild$. Let $B, B^*$ be quantum-braided planes with generators $X_i,Y_i$ for $i=1,2$ as above.
\begin{enumerate}
\item The co-double bosonisation, denoted $\cc_q[SL_3]$, has cross relations and coproducts
\[X_iY_j=Y_jX_i, \quad X_i \varsigma = q^{m\beta}\varsigma X_i, \quad Y_i\varsigma=q^{m\beta}\varsigma Y_i, \quad \begin{pmatrix}
\atild & \btild\\ \ctild & \dtild
\end{pmatrix} X_1 = \begin{pmatrix}
q^{-1} X_1 \atild & q^{-1}X_1 \btild \\ q^{m}X_1 \ctild & q^{m} X_1 \dtild
\end{pmatrix},\]
\[\begin{pmatrix}
\atild & \btild\\ \ctild & \dtild
\end{pmatrix} X_2 = \begin{pmatrix}
q^{m} X_2 \atild + (q^{-1}-1)X_1 \ctild & q^{m}X_2 \btild + (q^{-1}-1)X_1 \dtild \\ q^{-1}X_2 \ctild & q^{-1} X_2 \dtild
\end{pmatrix},\]
\[Y_1 \begin{pmatrix}
\atild & \btild\\ \ctild & \dtild
\end{pmatrix} = \begin{pmatrix}
q \atild Y_1 & q^{-m}\btild Y_1\\ q \ctild Y_1 & q^{-m}\dtild Y_1
\end{pmatrix}, \quad Y_2 \begin{pmatrix}
\atild & \btild\\ \ctild & \dtild
\end{pmatrix} = \begin{pmatrix}
q^{-m} \atild Y_2 + (q-1)\btild Y_1 & q\btild Y_2\\ q^{-m} \ctild Y_2+(q-1)\dtild Y_1 & q\dtild Y_2
\end{pmatrix},\]
\begin{align*}
\Delta X_1=&X_1\otimes 1 +\sum\limits_{r,s=0}^{n-1} (q-1)^{r+s} \begin{bmatrix} r+s\\ s \end{bmatrix}_{q} \big(\atild Y_1^r Y_2^s \otimes X_1^{r+1}X_2^s +q^{-mr} \btild Y_1^r Y_2^s \otimes X_1^{r}X_2^{s+1}  \big),\\
\Delta X_2 =&X_2\otimes 1+ \sum\limits_{r,s=0}^{n-1} (q-1)^{r+s} \begin{bmatrix} r+s\\ s \end{bmatrix}_{q} \big(\ctild Y_1^r Y_2^s \otimes X_1^{r+1}X_2^s +q^{-mr} \dtild Y_1^r Y_2^s \otimes X_1^{r}X_2^{s+1}  \big),\\
\Delta Y_1 =&1\otimes Y_1 +\sum\limits_{r,s=0}^{n-1} (q-1)^{r+s-1} q^{-r+ms+1} \begin{bmatrix} r+s-1\\ s \end{bmatrix}_{q}  Y_1^r Y_2^s \otimes X_1^{r-1}X_2^{s}\atild \\
&+\sum\limits_{r,s=0}^{n-1} (q-1)^{r+s-1} q^{-r-s+1} \begin{bmatrix} r+s-1\\ s-1 \end{bmatrix}_{q} Y_1^r Y_2^s \otimes X_1^{r}X_2^{s-1}\ctild, \\
\Delta Y_2 =&1\otimes Y_2 +\sum\limits_{r,s=0}^{n-1} (q-1)^{r+s-1} q^{-r+ms+1} \begin{bmatrix} r+s-1\\ s \end{bmatrix}_{q}  Y_1^r Y_2^s \otimes X_1^{r-1}X_2^{s}\btild \\
&+\sum\limits_{r,s=0}^{n-1} (q-1)^{r+s-1} q^{-r-s+1} \begin{bmatrix} r+s-1\\ s-1 \end{bmatrix}_{q} Y_1^r Y_2^s \otimes X_1^{r}X_2^{s-1}\dtild, 
\end{align*}
\[\Delta \varsigma=\sum\limits_{r,s=0}^{n-1}q^{-m\beta(r+s)}(q-1)^{r+s}\begin{bmatrix}
r+2m\beta-1\\r
\end{bmatrix}_q \begin{bmatrix}
r+s+2m\beta-1\\s
\end{bmatrix}_q \varsigma Y_1^r Y_2^s \otimes X_1^r X_2^s \varsigma, \]
\begin{align*}
\Delta \atild=& \sum\limits_{r,s=0}^{n-1} (q-1)^{r+s} q^{-r-s} \begin{bmatrix} r+s\\ s \end{bmatrix}_{q} \atild Y_1^r Y_2^s \otimes \big( q^{-ms}[r+1]_{q} X_1^r X_2^s\atild  + [s]_{q}  X_1^{r+1}X_2^{s-1}\ctild \big)\\
&+\sum\limits_{r,s=0}^{n-1} (q-1)^{r+s} q^{m(r+s)} \begin{bmatrix} r+s\\ s \end{bmatrix}_{q} \btild Y_1^r Y_2^s \otimes \big(q^{-m}[r]_{q}  X_1^{r-1}X_2^{s+1}\atild + \otimes q^s X_1^{r}X_2^{s}\ctild \big),\\
\Delta \btild=& \sum\limits_{r,s=0}^{n-1} (q-1)^{r+s} q^{-r-s} \begin{bmatrix} r+s\\ s \end{bmatrix}_{q} \atild Y_1^r Y_2^s \otimes \big( q^{-ms}[r+1]_{q} X_1^r X_2^s\btild  + [s]_{q}  X_1^{r+1}X_2^{s-1}\dtild \big)\\
&+\sum\limits_{r,s=0}^{n-1} (q-1)^{r+s} q^{m(r+s)} \begin{bmatrix} r+s\\ s \end{bmatrix}_{q} \btild Y_1^r Y_2^s \otimes \big(q^{-m}[r]_{q}  X_1^{r-1}X_2^{s+1}\btild + \otimes q^s X_1^{r}X_2^{s}\dtild \big),\\
\Delta \ctild=& \sum\limits_{r,s=0}^{n-1} (q-1)^{r+s} q^{-r-s} \begin{bmatrix} r+s\\ s \end{bmatrix}_{q} \ctild Y_1^r Y_2^s \otimes \big( q^{-ms}[r+1]_{q} X_1^r X_2^s\atild  + [s]_{q}  X_1^{r+1}X_2^{s-1}\ctild \big)\\
&+\sum\limits_{r,s=0}^{n-1} (q-1)^{r+s} q^{m(r+s)} \begin{bmatrix} r+s\\ s \end{bmatrix}_{q} \dtild Y_1^r Y_2^s \otimes \big(q^{-m}[r]_{q}  X_1^{r-1}X_2^{s+1}\atild + \otimes q^s X_1^{r}X_2^{s}\ctild \big),\\
\Delta \dtild=& \sum\limits_{r,s=0}^{n-1} (q-1)^{r+s} q^{-r-s} \begin{bmatrix} r+s\\ s \end{bmatrix}_{q} \ctild Y_1^r Y_2^s \otimes \big( q^{-ms}[r+1]_{q} X_1^r X_2^s\btild  + [s]_{q}  X_1^{r+1}X_2^{s-1}\dtild \big)\\
&+\sum\limits_{r,s=0}^{n-1} (q-1)^{r+s} q^{m(r+s)} \begin{bmatrix} r+s\\ s \end{bmatrix}_{q} \dtild Y_1^r Y_2^s \otimes \big(q^{-m}[r]_{q}  X_1^{r-1}X_2^{s+1}\btild + \otimes q^s X_1^{r}X_2^{s}\dtild \big).
\end{align*}
\item If $n>3$ and is not divisible by $3$ then $\cc_q[SL_3]$ is isomorphic to $c_{q^{-m}}[SL_3]$ with its standard coquasitriangular structure.
\end{enumerate}
\end{theorem}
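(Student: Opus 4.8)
The plan is to prove Theorem~\ref{codbos cq[SL_3]} by the same two-step strategy used for $\cc_q[SL_2]$ in Theorem~\ref{cdbthm}: first derive the presentation in part (1) directly from the general co-double bosonisation formulae of Theorem~\ref{codbos} and Proposition~\ref{codouble coquasitriangularity}, then in part (2) exhibit an explicit isomorphism with $c_{q^{-m}}[SL_3]$ under the stated arithmetic hypothesis on $n$. Part (1) is by construction: $\cc_q[SL_3] := B^{\underline{\mathrm{op}}}\lbiprod A\rbiprod B^*$ with $A=\widetilde{\cc_q[SL_2]}$ and $B=\cc_q^2$ living in ${}^A\CM$ via \eqref{braided-plane left coaction}, so coquasitriangularity is automatic from Proposition~\ref{codouble coquasitriangularity}, and the cross relations and coproducts listed are obtained by specialising the product and coproduct formulae of Theorem~\ref{codbos} to the chosen bases $\{e_a\}=\{X_1^rX_2^s\}$, $\{f^a\}=\{Y_1^rY_2^s/([r]_q![s]_q!)\}$ (dual by Lemma~\ref{exp of braided-plane}), using the explicit coaction matrix $\tilde{\mathbf t}=\mathbf t\,\varsigma^{m\beta}$, the coquasitriangular structure $R$ of \eqref{RSL2} together with $\mathcal R(\varsigma,\varsigma)=q$, and the braided coproduct of $\cc_q^2$. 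The $q$-binomial coefficients in the coproducts come from expanding $\underline\Delta(X_1^rX_2^s)$ as stated, exactly as the single-variable $q$-binomial identities \eqref{q-identity1} were used in Theorem~\ref{cdbthm}; one also needs the analogue of \eqref{Xop} relating $X_i^{r}$-type monomials to $\dotop$-products in $B^{\underline{\mathrm{op}}}$, and the $\varsigma$-coproduct additionally uses the series expansion of $(q-1)$-geometric type arising from $\varsigma^{m\beta}$ acting through the coaction (the double $q$-binomial with shift $2m\beta$ reflects the $m\beta$-th power of the coaction matrix).

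For part (2), following the $SL_2$ template, I would define an algebra map $\varphi:c_{q^{-m}}[SL_3]\to \cc_q[SL_3]$ on the matrix generators $t^i{}_j$ and check it is a bialgebra map and invertible. The natural guess, reading the co-double bosonisation as $\cc_q^2\lbiprod\widetilde{\cc_q[SL_2]}\rbiprod\cc_q^2$ mirroring the $3\times3$ block structure $sl_3 = n_-\oplus h\oplus n_+$ with $\cc_q[SL_2]$ as the `middle' $SL_2$-block, is to send the top-left $2\times2$ block and the extra row/column to suitable monomials: the $\varsigma$-generator and $\atild,\dtild$ (equivalently $d^{\pm1}$ from $\cc_q[SL_2]$) supply the `Cartan-like' data $t^1{}_1,t^2{}_2,t^3{}_3$, the generators $X_i$ supply the strictly-lower entries $t^2{}_1,t^3{}_1$ (or $t^3{}_2$ via a commutator, exactly as $X_{12}=E_{12}$ was composite on the $\cu$ side), and the $Y_i$ supply the strictly-upper entries $t^1{}_2,t^1{}_3$, with the remaining entry obtained as a $q$-commutator. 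The dual-pairing schematic $\langle B^{\underline{\mathrm{op}}}\lbiprod A\rbiprod B^*,\,B^{*\underline{\mathrm{cop}}}\lbiprod H\rbiprod B\rangle$ from the introduction, together with the already-established duality $\cu_q(sl_3)\cong u_{q^{-m}}(sl_3)$ of Lemma~\ref{dbos u_q(sl_3)}, gives a conceptual shortcut: $\cc_q[SL_3]$ is forced to be the Hopf algebra dual of $u_{q^{-m}}(sl_3)$ generated in the appropriate way, i.e.\ $c_{q^{-m}}[SL_3]$, once one checks the pairing is non-degenerate; the exclusion of $3\mid n$ is precisely the condition (inherited from Lemma~\ref{dbos u_q(sl_3)}(2), equivalently $\beta$ invertible mod $n$) under which no Cartan generators get identified and no central extension survives. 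For $n=3$ (the $\beta=0$ case of Example~\ref{dbos u_q(sl_3) at q^3=1}) one instead gets a central extension of a sub-Hopf algebra, matching the introduction's statement.

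The main obstacle will be verifying that the explicit $\varphi$ is a coalgebra map, i.e.\ that the complicated coproduct formulae of part (1)---with their $q$-binomial series---collapse to the clean matrix coproduct $\Delta\mathbf t=\mathbf t\otimes\mathbf t$ on $c_{q^{-m}}[SL_3]$ after substituting the chosen monomial expressions. In the $SL_2$ case this was already described as ``tedious but straightforward'' and hinged on first establishing $\Delta t^m = t^m\otimes t^m + (q^m-q^{-m})t^mY\otimes Xt^m$; here the analogous computation involves the two-variable braided plane and the central extension simultaneously, so the bookkeeping is heavier. A secondary obstacle is checking that $\varphi$ respects the quantum determinant relation $\det_q(\mathbf t)=1$ and the reduced relations $(t^i{}_j)^n=\delta_{ij}$---the latter should follow from $X_i^n=Y_i^n=0$, $\varsigma^n=1$ and $d^n=1$ in $\cc_q[SL_2]$, but verifying $\det_q=1$ will require genuine computation. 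I would handle this by checking $\varphi$ on generators and relations directly but organising the coalgebra check through the triangular decomposition (generators from $B$, $B^*$, and the Cartan part treated separately, then cross-terms), and by invoking Proposition~\ref{codouble coquasitriangularity} to identify the coquasitriangular structure with the standard one on $c_{q^{-m}}[SL_3]$ (up to the overall scalar, as in \eqref{RSL2}), which pins down $q^{-m}$ as the deformation parameter and confirms we land in the standard Hopf algebra rather than a twist.
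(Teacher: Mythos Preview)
Your proposal is essentially correct and follows the paper's approach for both parts. For Part~(1) you correctly identify that everything is direct computation from Theorem~\ref{codbos} with the basis/dual basis of Lemma~\ref{exp of braided-plane}; the paper indeed simplifies the raw double sums via a two-variable generalisation of \eqref{q-identity1}, namely
\[
\sum_{r_1=0}^{r}\frac{(-1)^{r-r_1}q^{\frac{r_1(r_1+1)}{2}+sr_1}}{[r_1]_q![r-r_1]_q!}=(q-1)^{r}\begin{bmatrix}r+s\\r\end{bmatrix}_q,
\]
which you should anticipate.

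For Part~(2) there is a small but important shift of emphasis worth noting. You frame the duality with $\cu_q(sl_3)$ as a \emph{conceptual shortcut} to be invoked after guessing $\varphi$, and flag the coalgebra verification as the main obstacle. The paper inverts this: it \emph{constructs} $\varphi$ as the adjoint of the isomorphism $u_{q^{-m}}(sl_3)\to\cu_q(sl_3)$ from Lemma~\ref{dbos u_q(sl_3)}. Concretely, the coefficients of $\varphi(t^i{}_j)$ in the dual basis $\{X_1^{i_1}X_2^{i_2}X^{i_3}t^{j_1}\varsigma^{j_2}Y^{k_1}Y_1^{k_2}Y_2^{k_3}\}$ of $\cc_q[SL_3]$ are read off by pairing $t^i{}_j$ against the PBW basis of $\cu_q(sl_3)$, using the standard matrix representation of $u_{q^{-m}}(sl_3)$ on $\C^3$ (and the computed value $\langle\mathbf t,g\rangle=\mathrm{diag}(q^{m/\beta},q^{m/\beta},q^{1/\beta})$). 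This \emph{produces} the explicit $\varphi$ rather than verifies a guess, and since it is the adjoint of a Hopf algebra isomorphism between finite-dimensional duals of the same dimension $n^8$, the bialgebra property and invertibility come for free; no direct collapse of the coproduct series to $\Delta\mathbf t=\mathbf t\otimes\mathbf t$ is needed. (Incidentally, $X_i$ correspond to the third \emph{column} $t^1{}_3,t^2{}_3$ and $Y_i$ to the third \emph{row}, not the other way round as you wrote.) The explicit inverse $\phi$ and the pullback of the coquasitriangular structure to the standard $R$-matrix of $c_{q^{-m}}[SL_3]$ are then a check rather than the crux.
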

\begin{proof}
(1) From $X_2 \dotop X_1 = q X_1 \dotop X_2$, we work inductively and find that
\[X_1^{r(\op)}\dotop X_2^{s(\op)} = q^{\frac{-(r+s)(r+s-1)}{2}}X_1^rX_2^s, \quad \bar{S}(X_1^{r(\op)}\dotop X_2^{s(\op)})=(-1)^{r+s}q^{\frac{-(r+s)(r+s-1)}{2}}X_1X_2\]
where $X_1^{r(\op)}$ means $X_1 \dotop X_1 \dotop \cdots $ $r$-times. We also need that
\[ \Delta_L(X_1^r) = \sum\limits_{r_1=0}^{r}\begin{bmatrix}
r\\ r_1
\end{bmatrix}_{q} \atild^{r_1}\btild^{r-r_1}\otimes X_1^{r_1}X_2^{r-r_1}, \quad  \Delta_L(X_2^s) = \sum\limits_{s_1=0}^{s}\begin{bmatrix}
s\\ s_1
\end{bmatrix}_{q} \ctild^{r_1}\dtild^{s-s_1}\otimes X_1^{s_1}X_2^{s-s_1}\]
and that $\zeta$ commutes with $\atild,\btild,\ctild,\dtild$. Then computation from Theorem~\ref{codbos} gives
\[X_iY_j=Y_jX_i, \quad X_i \varsigma = q^{m\beta}\varsigma X_i, \quad Y_i\varsigma=q^{m\beta}\varsigma Y_i, \quad \begin{pmatrix}
a & b\\ c & d
\end{pmatrix} X_1 = \begin{pmatrix}
 q^{-m^2} X_1 a & q^{-m^2}X_1 b \\ q^{m^2}X_1 c & q^{m^2} X_1 d
\end{pmatrix},\]
\[\begin{pmatrix}
a & b\\ c & d
\end{pmatrix} X_2 = \begin{pmatrix}
q^{m^2} (X_2 a + (q^{m}-q^{-m})X_1 c) & q^{m^2}(X_2 b + (q^m-q^{-m})X_1 d) \\ q^{-m^2}X_2 c & q^{-m^2} X_2 d
\end{pmatrix},\]
\[Y_1 \begin{pmatrix}
a & b\\ c & d
\end{pmatrix} = \begin{pmatrix}
q^{m^2} a Y_1 & q^{-m^2}b Y_1\\ q^{m^2} c Y_1 & q^{-m^2}d Y_1
\end{pmatrix}, \ Y_2 \begin{pmatrix}
a & b\\ c & d
\end{pmatrix} = \begin{pmatrix}
q^{-m^2}(a Y_2 - (q^m-q^{-m})b Y_1)  & q^{m^2}b Y_2\\ q^{-m^2} (c Y_2-(q^m-q^{-m})d Y_1) & q^{m^2}d Y_2
\end{pmatrix}\]
and hence the relations stated. The algebra generated by $X_i, Y_i, \varsigma, \atild, \btild, \ctild, \dtild$ is $n^8$ dimensional as required for these to be all the relations. For the coproduct, we use Lemma~\ref{exp of braided-plane} to provide a basis and dual basis of $\cc_q^2$ and $(\cc_q^2)^*$. Then
\begin{align*}
\Delta& X_1 =X_1\otimes 1\\
&+ \sum\limits_{r,s=0}^{n-1}\sum\limits_{r_1=0}^{r}\sum\limits_{s_1=0}^{s}\begin{bmatrix}
r\\r_1 \end{bmatrix}_{q}\begin{bmatrix} s\\s_1 \end{bmatrix}_{q} (-1)^{r+s-r_1-s_1}q^{ s_1(r-r_1)+\frac{(r_1+s_1)(r_1+s_1+1)}{2}+r_1+s_1} \dfrac{\atild Y_1^r Y_2^s}{[r]_{q}![s]_{q}!}\otimes X_1^{r+1} X_2^s\\
&+\sum\limits_{r,s=0}^{n-1}\sum\limits_{r_1=0}^{r}\sum\limits_{s_1=0}^{s}\begin{bmatrix}
r\\r_1 \end{bmatrix}_{q}\begin{bmatrix} s\\s_1 \end{bmatrix}_{q} (-1)^{r+s-r_1-s_1}q^{s_1(r-r_1)+\frac{(r_1+s_1)(r_1+s_1-1)}{2}+s_1-mr}(1+[r_1]_q(q-1))\\
&\kern11cm \dfrac{\btild Y_1^r Y_2^s}{[r]_{q}![s]_{q}!}\otimes X_1^{r} X_2^{s+1}
\end{align*}
and similarly for $\Delta X_2$. Likewise,
\begin{align*}
\Delta Y_1 =&1\otimes Y_1+ \sum\limits_{r,s=0}^{n-1}\sum\limits_{r_1=0}^{r}\sum\limits_{s_1=0}^{s}\begin{bmatrix}
r\\r_1 \end{bmatrix}_{q}\begin{bmatrix} s\\s_1 \end{bmatrix}_{q}[r_1]_q (-1)^{r+s-r_1-s_1}q^{s_1(r-r_1)+\frac{(r_1+s_1-1)(r_1+s_1-2)}{2}-r+ms+r_1+s_1}\\
&\kern10cm \dfrac{Y_1^r Y_2^s}{[r]_{q}![s]_{q}!}\otimes X_1^{r-1}X_2^s \atild\\
&+\sum\limits_{r,s=0}^{n-1}\sum\limits_{r_1=0}^{r}\sum\limits_{s_1=0}^{s}\begin{bmatrix}
r\\r_1 \end{bmatrix}_{q}\begin{bmatrix} s\\s_1 \end{bmatrix}_{q}q^{r_1+s_1-r-s+\frac{(r_1+s_1)(r_1+s_1-1)}{2}}(-1)^{r+s-r_1-s_1} ([s_1]_q+[r_1]_q [s-s_1]_q)\\
&\kern10cm\dfrac{Y_1^r Y_2^s}{[r]_{q}![s]_{q}!}\otimes X_1^{r}X_2^{s-1} \ctild
\end{align*}
and similarly for $\Delta Y_2$. Next, we have
\begin{align*}
\Delta \varsigma=& \sum\limits_{r,s=0}^{n-1}\sum\limits_{r_1=0}^{r}\sum\limits_{s_1=0}^{s}\begin{bmatrix}
r\\r_1\end{bmatrix}_q \begin{bmatrix}s\\s_1 \end{bmatrix}_q q^{s_1(r-r1)+m\beta(2r_1+2s_1-r-s)+\frac{(r_1+s_1)(r_1+s_1-1)}{2}}(-1)^{r+s-r_1-s_1}\dfrac{\varsigma Y_1^r Y_2^s}{[r]_q![s]_q!}\otimes X_1^r X_2^s \varsigma.
\end{align*}
\begin{align*}
\Delta \atild&=\sum\limits_{r,s=0}^{n-1}\sum\limits_{r_1=0}^{r}\sum\limits_{s_1=0}^{s}\begin{bmatrix}
r\\r_1 \end{bmatrix}_{q}\begin{bmatrix} s\\s_1 \end{bmatrix}_{q}(-1)^{r+s-r_1-s_1} q^{s_1(r-r_1) +2r_1+s_1-r+ms+\frac{(r_1+s1)(r_1+s_1-1)}{2}}\\
&\kern10cm\dfrac{\atild Y_1^r Y_2^s}{[r]_{q}![s]_{q}!}\otimes X_1^r X_2^s \atild\\
&+\sum\limits_{r,s=0}^{n-1}\sum\limits_{r_1=0}^{r}\sum\limits_{s_1=0}^{s}\begin{bmatrix}
r\\r_1 \end{bmatrix}_{q}\begin{bmatrix} s\\s_1 \end{bmatrix}_{q}[s-s_1]_{q}(-1)^{r+s-r_1-s_1}q^{s_1(r-r_1)+2r_1+2s_1-r-s+\frac{(r_1+s1)(r_1+s_1-1)}{2}}\\
&\kern9cm(1-q) \dfrac{\atild Y_1^r Y_2^s}{[r]_{q}![s]_{q}!}\otimes X_1^{r+1}X_2^{s-1}\ctild\\
&+\sum\limits_{r,s=0}^{n-1}\sum\limits_{r_1=0}^{r}\sum\limits_{s_1=0}^{s}\begin{bmatrix}
r\\r_1 \end{bmatrix}_{q}\begin{bmatrix} s\\s_1 \end{bmatrix}_{q}[r_1]_{q}(-1)^{r+s-r_1-s_1}q^{s_1(r-r_1)+r1+s1+mr+ms+\frac{(r_1+s1)(r_1+s_1-1)}{2}}\\
&\kern8cm(q^{-m}-q^{m})\dfrac{\btild Y_1^r Y_2^s}{[r]_{q}![s]_{q}!}\otimes X_1^{r-1}X_2^{s+1}\atild\\
&+\sum\limits_{r,s=0}^{n-1}\sum\limits_{r_1=0}^{r}\sum\limits_{s_1=0}^{s}\begin{bmatrix}
r\\r_1 \end{bmatrix}_{q}\begin{bmatrix} s\\s_1 \end{bmatrix}_{q}(-1)^{r+s-r_1-s_1}q^{s_1(r-r_1)+\frac{(r_1+s1)(r_1+s_1-1)}{2}+r_1+2s_1+mr-s}\\
&\kern6cm(1-[r_1]_{q}[s-s_1]_{q}(q-1)^2)\dfrac{\btild Y_1^r Y_2^s}{[r]_{q}![s]_{q}!}\otimes X_1^{r}X_2^{s}\ctild
\end{align*}
and similarly for $\Delta \btild, \Delta \ctild, \Delta \dtild$. The stated coproducts follow from the $q$-identity
\begin{align}\label{q-identity2}
\sum\limits_{r_1=0}^{r}\frac{(-1)^{r-r_1}q^{\frac{r_1(r_1+1)}{2}}q^{s r_1}}{[r_1]_{q}![r-r_1]_q!}=(q-1)^{r}\begin{bmatrix}
r+s\\r
\end{bmatrix}_q
\end{align}
for all $r,s$ (of which (\ref{q-identity1}) are specials cases) and further calculation. 

(2) If $n>3$ and is not divisible by $3$ then $\beta$ is invertible mod $n$. We define $\varphi:c_{q^{-m}}[SL_3]\to \cc_q[SL_3]$ by
\[\varphi(t^1{}_1)=a\varsigma^{\frac{m}{\beta}}+ \lambda X_1, \quad \varphi(t^1{}_2)=b\varsigma^{\frac{m}{\beta}}+ \lambda X_1, \quad  \varphi(t^1{}_3)=X_1\varsigma^{\frac{1}{\beta}},\]
\[\varphi(t^2{}_1)=c\varsigma^{\frac{m}{\beta}}+ \lambda X_2, \quad \varphi(t^2{}_2)=d\varsigma^{\frac{m}{\beta}}+ \lambda X_2 \varsigma^{\frac{1}{\beta}}Y_2, \quad \varphi(t^2{}_3)=X_2\varsigma^{\frac{1}{\beta}},\]
\[\varphi(t^3{}_1)=\lambda \varsigma^{\frac{1}{\beta}}Y_1, \quad \varphi(t^3{}_2)=(q^m-q^{-m})\varsigma^{\frac{1}{\beta}}Y_2, \quad  \varphi(t^3{}_3)= \varsigma^{\frac{1}{\beta}},
\]
where $\lambda = q^m-q^{-m}$. A tedious calculation shows that this extends as an algebra map and is a coalgebra map. In the other direction, we define $\phi : \cc_q[SL_3] \to c_{q^{-m}}[SL_3]$ by
\[\phi(\varsigma)=(t^3{}_3)^{\beta}, \quad \phi(X_1)=t^1{}_3 (t^3{}_3)^{-1}, \quad \phi(X_2)=t^2{}_3(t^3{}_3)^{-1},\]
\[\phi(Y_1)=(q^m-q^{-m})^{-1} (t^3{}_3)^{-1} t^3{}_1, \quad \phi(Y_1)=(q^m-q^{-m})^{-1} (t^3{}_3)^{-1} t^3{}_2,\]
\[\phi(a)=t^1{}_1(t^3{}_3)^{-m}-q^m t^1{}_3 t^3{}_1(t^3{}_3)^m, \quad \phi(b)=t^1{}_2(t^3{}_3)^{-m}-q^m t^1{}_3 t^3{}_2(t^3{}_3)^m,\]
\[\phi(c)=t^2{}_1(t^3{}_3)^{-m}-q^m t^2{}_3 t^3{}_1(t^3{}_3)^m, \quad \phi(d)=t^2{}_2(t^3{}_3)^{-m}-q^m t^2{}_3 t^3{}_2(t^3{}_3)^m\]
as inverse to $\varphi$. Although one can verify these matters directly, the map $\varphi$ was obtained as adjoint to the isomorphism $u_{q^{-m}}(sl_3)\to \cu_q(sl_3)$ in part (2) of Lemma~\ref{dbos u_q(sl_3)} as follows. The standard duality between $u_{q^{-m}}(sl_3)$ and  $c_{q^{-m}}[SL_3]$ is by
\[ \langle \mathbf{t},F_1\rangle = \mathrm{\mathbf{e}}_{12}, \quad \langle \mathbf{t},F_2\rangle = \mathrm{\mathbf{e}}_{23}, \quad \langle \mathbf{t},F_{12}\rangle = \mathrm{\mathbf{e}}_{13}, \quad \langle \mathbf{t},E_1\rangle = \lambda \mathrm{\mathbf{e}}_{21}, \quad \langle \mathbf{t},E_2\rangle = \lambda \mathrm{\mathbf{e}}_{32},\]\[  \langle \mathbf{t}, E_{12} \rangle= \lambda \mathrm{\mathbf{e}}_{31},\quad 
\langle \mathbf{t},K_1\rangle = \begin{pmatrix}
 q^{-m} & 0 & 0 \\
 0 & q^m & 0 \\
 0 & 0 & 1 
\end{pmatrix}, \quad \langle \mathbf{t},K_2\rangle = \begin{pmatrix}
 1 & 0 & 0 \\
 0 & q^{-m} & 0 \\
 0 & 0 & q^m 
\end{pmatrix},
\]
where $\mathrm{\mathbf{e}}_{ij}$ is an elementary matrix with entry 1 in $(i,j)$-position and 0 elsewhere. The duality between $\cu_q(sl_3)$ and $\cc_q[SL_3]$ is part of our construction with a natural basis of $\cc_q[SL_3]$ built from bases of $\cc_q^2, (\cc_q^2)^*$ and $\widetilde{\cc_q[SL_2]}=\cc_q[SL_2]\otimes \C_{q}[\varsigma]/(\varsigma^n-1)$. The first tensor factor here has a basis of monomials in $X,t,Y$ by Theorem~\ref{cdbthm}. Therefore we have $\{X_1^{i_1}X_2^{i_2}X^{i_3}t^{j_1}\varsigma^{j_2}Y^{k_1}Y_1^{k_2}Y_2^{k_3}\}$ as a basis of $\cc_q[SL_3]$  essentially dual to the PBW basis of $\cu_q(sl_3)$ in the sense
\begin{align*}
\langle X_1^{i_1}X_2^{i_2}&X^{i_3}t^{j_1}\varsigma^{j_2}Y^{k_1}Y_1^{k_2}Y_2^{k_3},F_{12}^{i'_1}F_2^{i'_2}F_1^{i'_3}K_1^{j'_1}g^{j'_2}E_1^{k'_1}E_{12}^{k'_2}E_2^{k'_3}\rangle\\
&=\delta_{i_1i'_1}\delta_{i_2i'_2}\delta_{i_3i'_3}\delta_{k_1k'_1}\delta_{k_2k'_2}\delta_{k_3k'_3}[i_1]_{q^{-1}}![i_2]_{q^{-1}}![i_3]_{q^{-1}}!q^{j_1j'_1+j_2j'_2}[k_1]_{q}![k_2]_{q}![k_3]_{q}!.
\end{align*}
This is the dual basis result for $\cu_q(sl_3)$ and $\cc_q[SL_3]$ analogous to Corollary \ref{dual basis} in the $sl_2$ case. Hence the coefficients of $\varphi(t^i{}_j)$ in this basis of $\cc_q[SL_3]$ will be picked out by evaluation against the dual basis $F_{12}^{i_1}F_2^{i_2}F_1^{i_3}\delta_{j_1}(K_1)\delta_{j_2}(g)E_1^{k_1}E_{12}^{k_2}E_2^{k_3}$, where $\delta_j(K_1),\delta_j(g)$ are defined as in Corollary~\ref{dual basis}. These values are given by the matrix representation as above except that we still need the matrix representation of $g$. From  Lemma \ref{dbos u_q(sl_3)} we recall that that $\cu_q(sl_3) \cong u_{q^{-m}}(sl_3)$ with $g\mapsto(K^{-m}K_2)^{\frac{1}{m\beta}}$, hence we have $\langle \mathbf{t},g \rangle= {\rm diag}(q^{\frac{ m}{\beta}}, q^{\frac{ m}{\beta}}, q^{\frac{ 1}{\beta}})$. This gives, for example, 
\begin{align*} \langle \varphi(t^1{}_1), F_{12}^{i_1}F_2^{i_2}F_1^{i_3}\delta_{j_1}(K)\delta_{j_2}(g)E_1^{k_1}&E_{12}^{k_2}E_2^{k_3} \rangle=\delta_{i_1,0}\delta_{i_2,0}\delta_{i_3,0}\delta_{j_1,-m}\delta_{j_2,\frac{m}{\beta}}\delta_{k_1,0}\delta_{k_2,0}\delta_{k_3,0}\\
&+\delta_{i_1,0}\delta_{i_2,0}\delta_{i_3,1}\delta_{j_1,m}\delta_{j_2,\frac{m}{\beta}}\delta_{k_1,1}\delta_{k_2,0}\delta_{k_3,0}(q^m-q^{-m})\\
&+\delta_{i_1,0}\delta_{i_2,1}\delta_{i_3,0}\delta_{j_1,0}\delta_{j_2,\frac{1}{\beta}}\delta_{k_1,0}\delta_{k_2,0}\delta_{k_3,1}(q^m-q^{-m}), 
\end{align*}
which by summing against the dual basis implies that 
\[\varphi(t^1{}_1)=t^{-m}\varsigma^{\frac{m}{\beta}}+(q^m-q^{-m})Xt^m\varsigma^{\frac{m}{\beta}}Y+(q^m-q^{-m})X_1\varsigma^{\frac{1}{\beta}}Y_1.\]
We then convert over to the $a,b,c,d$ generators as discussed. 

Finally, the coquasitriangular structure of $\cc_q[SL_3]$  computed using  Lemma~\ref{codouble coquasitriangularity} and pulled back to the $c_{q^{-m}}[SL_3]$ generators is $\CR(\varphi(t^i{}_j),\varphi(t^k{}_l))=R^I{}_J$, where $I=(i,k)$, $J=(j,l)$ are taken in lexicographic order $(1,1),(1,2),\cdots,(3,3)$ and
\[R^I{}_J= q^{\frac{m}{3}}\begin{pmatrix}
q^{-m} & 0 & 0 & 0 & 0 & 0 & 0 & 0 & 0 \\
0 & 1& 0 & q^{-m}-q^m  & 0 & 0 & 0 & 0 & 0\\
0 & 0 & 1 & 0 & 0 & 0 & q^{-m}-q^m &0 &0\\
0 & 0 & 0 & 1 & 0 & 0 & 0 & 0 & 0\\
0 & 0 & 0 & 0 & q^{-m} & 0 & 0 & 0 & 0\\
0 & 0 & 0 & 0 & 0 & 1 & 0 & q^{-m}-q^m &0\\
0 & 0 & 0 & 0 & 0 & 0 & 1 & 0 & 0\\
0 & 0 & 0 & 0 & 0 & 0 & 0 & 1 & 0\\
0 & 0 & 0 & 0 & 0 & 0 & 0 & 0 & q^{-m}
\end{pmatrix}, \]
which is the standard coquasitriangular structure on the generators of $c_{p}[SL_3]$ given in \cite{Primer} when specialised to the root of unity $p=q^{-m}$. \end{proof}

\begin{remark}\label{work with Cq[GL_2]}
In the case (2) of the theorem above, we can identify $\widetilde{\cc_q[SL_2]}\cong c_{q^{-m}}[GL_2]$ by sending the four matrix generators of the latter to $\atild=a\varsigma^{m\beta},\btild=b\varsigma^{m\beta},\ctild=c\varsigma^{m\beta},\dtild=d\varsigma^{m\beta}$. The $q$-determinant $D$ maps to $\varsigma^{2m\beta}$. The converse direction is clear since  $\beta$ is invertible mod $n$ when $n>3$ and not divisible by 3, so we can write $\varsigma =D^{\frac{1}{2m\beta}}$. \end{remark}

\begin{example}
At $q^3=1$, $\cc_q^2$ is already a braided-Hopf algebra in the category of $\cc_q[SL_2]$-comodules without a central extension. Therefore we can apply Theorem~\ref{codbos} and obtain a Hopf algebra, which we denote $\cc_q'[SL_3]$, generated by $X_i,Y_i,a,b,c,d$ with the additional cross relations and coproducts
\[\begin{pmatrix}
a & b\\ c & d
\end{pmatrix} X_1 = \begin{pmatrix}
q^{-1} X_1 a & q^{-1}X_1 b \\ qX_1 c & q X_1 d
\end{pmatrix}, \quad \begin{pmatrix}
a & b\\ c & d
\end{pmatrix} X_2 = \begin{pmatrix}
q X_2 a + (q^{-1}-1)X_1 c & qX_2 b + (q^{-1}-1)X_1 d \\ q^{-1}X_2 c & q^{-1} X_2 d
\end{pmatrix},\]
\[Y_1 \begin{pmatrix}
a & b\\ c & d
\end{pmatrix} = \begin{pmatrix}
q a Y_1 & q^{-1}b Y_1\\ q c Y_1 & q^{-1}d Y_1
\end{pmatrix}, \quad Y_2 \begin{pmatrix}
a & b\\ c & d
\end{pmatrix} = \begin{pmatrix}
q^{-1} a Y_2 + (q-1)b Y_1 & qb Y_2\\ q^{-1} c Y_2+(q-1)d Y_1 & qd Y_2
\end{pmatrix},\]
	\begin{align*}
	\Delta X_1&= X_1\otimes 1 + a\otimes X_1 + b\otimes X_2 + \lambda aY_1\otimes X_1^2 + \lambda bY_2 \otimes X_2^2 + \lambda aY_2\otimes X_1X_2 + \lambda q^2 bY_1\otimes X_1X_2\\
	&+\lambda^2 aY_2^2 \otimes X_1X_2^2 + \lambda^2 q bY_1^2\otimes X_1^2X_2 + \lambda^2 [2]_{q}aY_1Y_2\otimes X_1^2X_2 + \lambda^2 [2]_{q} q^2 bY_1Y_2 \otimes X_1X_2^2,\\
	\Delta Y_1&=1\otimes Y_1 + Y_1\otimes a + Y_2\otimes c + \lambda q^2 Y_1^2\otimes X_1a + \lambda q^2 Y_2^2\otimes X_2c + \lambda q Y_1Y_2\otimes X_2a+ \lambda q^2 Y_1Y_2\otimes X_1c\\
	&+\lambda^2 q^2 Y_1Y_2^2\otimes X_2^2a + \lambda^2 [2]_{q}q Y_1Y_2^2\otimes X_1X_2c + \lambda^2 [2]_{q}Y_1^2Y_2\otimes X_1X_2a + \lambda^2 q Y_1^2 Y_2 \otimes X_1^2c,\\
	\Delta a &=a\otimes a + b\otimes c +\lambda [2]_{q}q^2 aY_1\otimes X_1a + \lambda q aY_2\otimes X_2a + \lambda q^2 aY_2\otimes X_1c + \lambda [2]_{q} q^{-1} bY_2\otimes X_2c \\
	&+\lambda bY_1\otimes X_2a + \lambda q bY_1\otimes X_1c + \lambda^2 q^2 aY_2^2 \otimes X_2^2a + \lambda^2 q^2 bY_1^2 \otimes X_1^2c + \lambda^2 [2]_{q} q aY_2^2\otimes X_1X_2c,\\
	&+\lambda^2 [2]_{q} q bY_1^2\otimes X_1X_2a + \lambda^2 [2]_q^2 aY_1Y_2 \otimes X_1X_2a + \lambda^2[2]_q^2 bY_1Y_2\otimes X_1X_2c \\
	&+ \lambda^2 [2]_{q} q aY_1Y_2\otimes X_1^2c + \lambda^2 [2]_{q}q bY_1Y_2\otimes X_2^2a,
	\end{align*}
	and similarly for the remaining coproducts. Here $\lambda=q-1$. This $\cc_q'[SL_3]$ is dual to $\cu_q'(sl_3)$ in Example~\ref{dbos u_q(sl_3) at q^3=1} and it is not isomorphic to $c_{q^{-1}}[SL_3]$, but rather to a sub-Hopf algebra  by $\phi:\cc_q'[SL_3]\hookrightarrow c_{q^{-1}}[SL_3]$ with 
\[\phi(X_1)=t^1{}_3(t^3{}_3)^{-1}, \quad \phi(X_2)=t^2{}_3(t^3{}_3)^{-1}, \quad \phi(Y_1)=-\dfrac{t^3{}_1(t^3{}_3)^{-1}}{\lambda}, \quad \phi(Y_2)=-\dfrac{t^3{}_2(t^3{}_3)^{-1}}{\lambda},\]
\[\phi(a) = t^1{}_1(t^3{}_3)^{-1}-t^1{}_3(t^3{}_3)^{-1}t^3{}_1(t^3{}_3)^{-1}, \quad \phi(b) = t^1{}_2(t^3{}_3)^{-1}-t^1{}_3(t^3{}_3)^{-1}t^3{}_2(t^3{}_3)^{-1}, \]
\[\phi(c) = t^2{}_1(t^3{}_3)^{-1}-t^2{}_3(t^3{}_3)^{-1}t^3{}_1(t^3{}_3)^{-1}, \quad \phi(d) = t^2{}_2(t^3{}_3)^{-1}-t^2{}_3(t^3{}_3)^{-1}t^3{}_2(t^3{}_3)^{-1}. \]

Moreover, $\cc_q'[SL_3]$ is a coquasitriangular Hopf algebra by Lemma~\ref{codouble coquasitriangularity}. Writing $s^1{}_1=a, s^1{}_2=b, s^2{}_1=c,s^2{}_2=d$ for the matrix form of the generators of $\cc_q[SL_2]$, the coquasitriangular structure of $\cc_q'[SL_3]$ comes out as 
\[\CR(s^i{}_j,s^k{}_l)=R^i{}_j{}^k{}_l,\quad \CR(X_i,Y_j)=-\delta_{i,j}, \quad \CR(X_i,X_j)=\CR(Y_i,Y_j)=\CR(Y_i,X_j)=0,\]
\[\CR(X_i,s^j{}_k)=\CR(Y_i,s^j{}_k)=\CR(s^i{}_j,X_k)=\CR(s^i{}_j,Y_k)=0,\]
\[\CR(X_is^j{}_k,s^u{}_vY_w)=-\delta_{w_1,i}R^j{}_{j_1}{}^{w_1}{}_w R^{j_1}{}_k{}^u{}_v, \quad \CR(s^i{}_jY_k, X_u s^v{}_w)=0,\]
where $R$ is as in (\ref{RSL2}) with $m=1$. 
Theorem~\ref{codbos cq[SL_3]} (1) still applies  at $q^3=1$ with $\beta=0$ giving that $\varsigma$ is central and group-like in $\cc_q[SL_3]$ and  that $\cc_{q}[SL_3]\cong \cc_q'[SL_3]\otimes \C_q[\varsigma]/(\varsigma^3-1)$.
\end{example}

\subsection{Fermionic version of $\C_q[SL_3]$}\label{fermionic}
Here we similarly apply co-double bosonisation but this time to obtain a part-fermionic version of $\C_q[SL_3]$ by using the fermionic quantum-braided plane. We no longer work at roots of unity but rather with $q$ generic and also, in the spirit of Remark~\ref{work with Cq[GL_2]}, we take as our middle Hopf algebra  $A = \C_q[GL_2]$, the coquasitriangular Hopf algebra generated by $\atild, \btild, \ctild, \dtild$ with the same $q$-commutation relations and coalgebra structure as $\C_{q}[SL_2]$, but with $D=\atild\dtild-q^{-1}\btild\ctild=\dtild\atild-q\btild\ctild$ inverted. The antipode and coquasitriangular structure are given in matrix form  by
\[S\begin{pmatrix}
\atild & \btild \\
\ctild & \dtild
\end{pmatrix} = D^{-1}\begin{pmatrix}
\dtild & -q\btild\\
-q^{-1}\ctild & \atild
\end{pmatrix}, \quad R = -q^{-1} \begin{pmatrix}
q & 0 & 0 & 0\\ 0 & 1 & q-q^{-1} & 0\\ 0 & 0 & 1 & 0\\ 0& 0 & 0 & q
\end{pmatrix}.\]
In fact the normalisation of $R$ here can be chosen freely (there is a 1-parameter family of such quasitriangular structures on this Hopf algbra) which we have fixed so that we have $B=\C_q^{0|2} \in {}^{A}\CM$ as a fermionic quantum-braided plane generated by $e_1, e_2$  with the relations and coproduct and braiding
\[e_i^2=0, \quad e_2e_1+q^{-1}e_1e_2=0, \quad \underline{\Delta} e_i = e_i\otimes 1 + 1\otimes e_i,\quad \underline{\epsilon}e_i=0, \quad \underline{S}e_i=-e_i,\]
\[\Psi(e_i\otimes e_i)=-e_i\otimes e_i, \quad \Psi(e_1\otimes e_2)=-q^{-1}e_2\otimes e_1, \quad \Psi(e_2\otimes e_1)=-q^{-1}e_1\otimes e_2 - (1-q^{-2})e_2\otimes e_1.\]
This has a left $\C_{q}[GL_2]$-coaction as in (\ref{braided-plane left coaction}). Similarly, its dual $B^* = 
(\C_q^{0|2})^*$ lives in the category of right $\C_q[GL_2]$-comodules with coaction as in (\ref{braided-plane rigt coaction}). 

\begin{proposition}
Let $q\in \C^*$ not be a root of unity. The co-double bosonisation $B^{\op}\lbiprod A \rbiprod B^*$ with the above $B,A,B^{*}$ is a coquasitriangular Hopf algebra $\C_q^{fer}[SL_3]$ generated by $e_i, f_i$ for $i=1,2$ and $\atild,\btild, \ctild, \dtild,D,D^{-1}$, with  cross relations and coproducts
\[f_i e_j = e_j f_i, \quad \begin{pmatrix} \atild & \btild \\ \ctild & \dtild \end{pmatrix}e_1 = \begin{pmatrix} -e_1 \atild & -e_1 \btild \\ -qe_1\ctild & -qe_1\dtild \end{pmatrix},\]
\[  \begin{pmatrix} \atild & \btild \\ \ctild & \dtild\end{pmatrix}e_2 = \begin{pmatrix} -qe_2\atild-(1-q^2)e_1 \ctild & -q e_2\btild - (1-q^2)e_1\dtild \\ -e_2\ctild & -e_2\dtild  \end{pmatrix}, \]
\[ f_1 \begin{pmatrix} \atild & \btild \\ \ctild & \dtild\end{pmatrix} = \begin{pmatrix} -\atild f_1 & -q^{-1}\btild f_1 \\ -\ctild f_1 & -q^{-1}\dtild f_1 \end{pmatrix}, \quad f_2 \begin{pmatrix} \atild & \btild \\ \ctild & \dtild \end{pmatrix} = \begin{pmatrix} -q^{-1}\atild f_2 - (1-q^{-2})\btild f_1 & -\btild f_2\\ -q^{-1}\ctild f_2 - (1-q^{-2})\dtild f_1 & -\dtild f_2 \end{pmatrix}, \]
\[\Delta e_1 = e_1 \otimes 1 + \atild \otimes e_1 + \btild\otimes e_2 +(1-q^{-2})(q^{-1}\btild f_1 - \atild f_2)\otimes e_1 e_2,\]
\[\Delta e_2 = e_2 \otimes 1 + \ctild\otimes e_1 + \dtild \otimes e_2 +(1-q^{-2})(q^{-1}\dtild f_1 - \ctild f_2)\otimes e_1 e_2,\]
\[\Delta f_1 = 1 \otimes f_1 + f_1 \otimes \atild + f_2 \otimes \ctild +(1-q^2)f_1f_2\otimes (e_1\ctild-q^{-1}e_2\atild),\]
\[\Delta f_2 = 1 \otimes f_2 + f_1 \otimes \btild + f_2 \otimes \dtild +(1-q^2)f_1f_2\otimes (e_1\dtild-q^{-1}e_2\btild),\]
\[\Delta \atild = \atild\otimes \atild + \btild\otimes \ctild + (q-q^{-1})(\btild f_1-q\atild f_2)\otimes(e_1\ctild-q^{-1}e_2\atild),\]
\[\Delta \btild = \atild\otimes \btild + \btild \otimes \dtild + (q-q^{-1})(\btild f_1-q\atild f_2)\otimes(e_1\dtild-q^{-1}e_2\btild),\]
\[\Delta \ctild = \ctild\otimes \atild + \dtild\otimes \ctild + (q-q^{-1})(\dtild f_1-q\ctild f_2)\otimes(e_1\ctild -q^{-1}e_2\atild),\]
\[\Delta \dtild = \ctild \otimes \btild + \dtild\otimes \dtild + (q-q^{-1})(\dtild f_1-q\ctild f_2)\otimes(e_1\dtild-q^{-1}e_2\btild).\]
\end{proposition}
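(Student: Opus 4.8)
The plan is to recognise the Proposition as a direct instance of the co-double bosonisation of Theorem~\ref{codbos}, taken with $A=\C_q[GL_2]$, $B=\C_q^{0|2}$ regarded in ${}^A\CM$ via the left coaction (\ref{braided-plane left coaction}), and $B^*=(\C_q^{0|2})^*$ regarded in $\CM^A$ via the right coaction (\ref{braided-plane rigt coaction}). First I would confirm the input data. One checks that the matrix coaction makes $\C_q^{0|2}$ a left $A$-comodule algebra and comodule coalgebra, and --- the key point --- that the braiding it induces from the coquasitriangular structure of $\C_q[GL_2]$ reproduces the stated $\Psi$ exactly: this is precisely what forces the overall normalisation $-q^{-1}$ of the $R$-matrix, since only with that prefactor does the induced $\Psi$ scale correctly for $\underline\Delta$ to extend as an algebra map to the braided tensor product, i.e.\ for $\C_q^{0|2}$ to be a braided group in ${}^A\CM$; dually the same holds for $B^*$, and the two are paired by $\langle e_1^ie_2^j,f_1^{i'}f_2^{j'}\rangle=\delta_{ii'}\delta_{jj'}$ for $i,j\in\{0,1\}$. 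I would then fix the basis $\{e_a\}=\{1,e_1,e_2,e_1e_2\}$ of $B$, the dual basis $\{f^a\}=\{1,f_1,f_2,f_1f_2\}$ of $B^*$, and record the ancillary data needed below: $\underline\Delta(e_1e_2)=e_1e_2\otimes 1+e_1\otimes e_2-q^{-1}e_2\otimes e_1+1\otimes e_1e_2$ and its $f$-analogue; the nontrivial coaction $\Delta_L(e_1e_2)=D\otimes e_1e_2$; and the braided-opposite data $e_1\dotop e_2=q^2e_1e_2$, $e_2\dotop e_1=-q\,e_1e_2$, $\bar S(e_1e_2)=q^2e_1e_2$ on $B^{\op}$.

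With these inputs in hand, Theorem~\ref{codbos} at once produces an ordinary Hopf algebra on $B^{\op}\otimes A\otimes B^*$ --- which we name $\C_q^{fer}[SL_3]$ --- containing $B^{\op}$, $A$ and $B^*$ as subalgebras, and Proposition~\ref{codouble coquasitriangularity} at once makes it coquasitriangular; so the real work is only to unpack the abstract formulae on generators. The cross relations come from the product formula of Theorem~\ref{codbos}: I would substitute the coactions of $A$ on $B$ and on $B^*$ together with the values on the generators of $\C_q[GL_2]$ of $\mathcal{R}(-,-)$ and of $(a,b)\mapsto\mathcal{R}(Sa,b)$ --- given respectively by the matrix $R$ in the statement and by its inverse $R^{-1}$ --- and read off $f_ie_j=e_jf_i$, the commutation of the matrix $(\atild,\btild,\ctild,\dtild)$ past $e_1,e_2$, and of $f_1,f_2$ past the matrix, in the displayed forms; the characteristic signs (e.g.\ the $-e_1\atild$ in $\atild e_1$) are exactly the entries of $R^{-1}$ and reflect the fermionic, i.e.\ $-q^{-1}$-normalised, braiding. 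No separate ``these are all the relations'' step is needed, since by construction the underlying vector space is $B^{\op}\otimes A\otimes B^*$.

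The coproducts are the laborious part and each is a special case of the coproduct formula of Theorem~\ref{codbos}: $\Delta e_i$ is the case $x=e_i$, $k=1$, $y=1$; $\Delta f_i$ the case $x=1$, $k=1$, $y=f_i$; and $\Delta\atild,\Delta\btild,\Delta\ctild,\Delta\dtild$ the cases $x=1$, $y=1$ with $k$ running over $\atild,\btild,\ctild,\dtild$. In each case I would expand the sum over the four-element basis $\{e_a\}$, feeding in $\underline\Delta$ and its iterate on $B$ (and on $B^*$), the coactions, the $\dotop$-products and $\bar S$ on $B^{\op}$, and the $\mathcal{R}$-values on $A$. The duality factor $\langle y{\underline{\o}}^{\bar{\z}},e_{a}{\underline{\t}} \rangle$ and the counit conditions collapse almost all contributions, and the fermionic relations $e_i^2=0$, $e_2e_1=-q^{-1}e_1e_2$ (and likewise for $f$) leave precisely the terms displayed, with the coefficients $1-q^{-2}$ (in $\Delta e_i,\Delta f_i$) and $q-q^{-1}$ (in $\Delta\atild,\dots,\Delta\dtild$) emerging as differences $1-\mathcal{R}(\cdot,\cdot)\mathcal{R}(\cdot,\cdot)$ of $R$-matrix entries.

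I expect the coproduct computation, and within it $\Delta\atild,\Delta\btild,\Delta\ctild,\Delta\dtild$, to be the only genuine obstacle. There $k$ itself carries the nontrivial coproduct $\atild\otimes\atild+\btild\otimes\ctild$ and its analogues, so one must handle simultaneously the nested Sweedler indices of Theorem~\ref{codbos} (a coaction followed by a coproduct in $A$), the braided-opposite product, the braided antipode $\bar S$, several $\mathcal{R}$-factors, and the fermionic signs, and one additionally needs the $q$-commutation relations of $\C_q[GL_2]$ to bring each answer to normal-ordered form. Every individual step is routine; it is the volume and precision of the bookkeeping that constitutes the whole difficulty.
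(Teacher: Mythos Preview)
Your proposal is correct and follows essentially the same route as the paper: set up the fermionic quantum-braided plane data (inverse braiding, $\dotop$-products, $\bar S$, the basis $\{1,e_1,e_2,e_1e_2\}$ and its dual), then read off the cross relations from the product formula and the coproducts from the coproduct formula of Theorem~\ref{codbos}, with coquasitriangularity immediate from Proposition~\ref{codouble coquasitriangularity}. The paper additionally repackages the result in matrix-generator form $t^i{}_j$ and computes the $9\times 9$ R-matrix, but that goes beyond what the Proposition asserts and is not needed for your proof.
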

\begin{proof}
First note that
\[\CR(S\atild,\atild)= \CR(S\dtild,\dtild)=-1, \quad \CR(S\atild,\dtild)=\CR(S\dtild,\atild)=-q, \quad \CR(S\btild,\ctild)=-(1-q^{2})\]
and zero on other cases of this form. Then  the inverse braiding is
\[\Psi^{-1}(e_1 \otimes e_2) = \CR(Se_{1}^{\bar{\o}},e_2^{\bar{\o}})e_2^{\bar{\infi}}\otimes e_1^{\bar{\infi}} = -q e_2 \otimes e_1 - (1-q^{2})e_1 \otimes e_2,\]
\[\Psi^{-1}(e_2\otimes e_1)= \CR(Se_2^{\bar{\o}}, e_1^{\bar{\o}})e_1^{\bar{\infi}}\otimes e_2^{\bar{\infi}} = -qe_1 \otimes e_2,\]
with the result that $\bar{S}(e_1\dotop e_2)=q^{2} e_1\dotop e_2$ and  $e_2 \dotop e_1 + q^{-1} e_1 \dotop e_2 = 0$ in $B^{\op}$. We now apply the co-double bosonisation theorem. It is easy to see that $f_ie_j \equiv (1\otimes 1 \otimes f_i)(e_j \otimes 1 \otimes 1)=e_j\otimes 1 \otimes 1 \equiv e_jf_i$. Next, we compute that for any $s^i{}_j \in \C_q[GL_2]$, where $s^1{}_1=\atild, s^1{}_2=\btild, s^2{}_1=\ctild, s^2{}_2=\dtild$, 
\[s^i{}_j e_k = e_k^{\bar{\infi}}(s^i{}_j)\t \CR(S(s^i{}_j)\o ,e_k^{\bar{\o}}) = \sum\limits_{l=1}^{2}e_k^{\bar{\infi}}s^l{}_j \CR(Ss^i{}_l,e_k^{\bar{\o}}),\]
\[f_k s^i{}_j = (s^i{}_j)\o f_{k}^{\bar{\z}}\CR(f_{k}^{\bar{\o}}, (s^i{}_j)\t) = \sum\limits_{l=1}^{2} s^i{}_l f_{k}^{\bar{\z}} \CR(f_{k}^{\bar{\o}}, s^l{}_j),\]
which comes out as the stated cross relations. Now let  
\[\{ e_a\}=\{1,e_1,e_2,e_1e_2\}, \quad \{f^a \}= \{1,f_1,f_2, f_1f_2 \}\]
be a basis and dual basis of $B,B^*$ respectively. Then
\[\Delta e_i =  e_i \otimes 1 +\sum\limits_{a=1}^{2} e_i^{\bar{\o}}\o f^a\otimes (e_a\underline{\o}^{\bar{\infi}}\dotop e_i^{\bar{\infi}}\dotop \bar{S}e_a\underline{\t}) \mathcal{R}(e_a\underline{\o}^{\bar{\o}}, e_i^{\bar{\o}}\t),\]
\[\Delta f_i =  1 \otimes f_i +\sum\limits_{a=1}^{2} f^a\otimes(e_a\underline{\o}\dotop \bar{S}e_a\underline{\th}^{\bar{\infi}})f_{i}^{\bar{\o}}\t \CR(Sf_{i}^{\bar{\o}}\o, e_a\underline{\th}^{\bar{\infi}}) \langle f_i, e_a\underline{\t} \rangle,\]
\[\Delta s^i{}_j = \sum\limits_{a,k,l,r=1}^{2}s^i{}_k f^{a}\otimes (e_a\underline{\o}^{\bar{\infi}}\dotop \bar{S}e_a\underline{\t}^{\bar{\infi}})s^r{}_j \CR(e_a\underline{\o}^{\o}, s^k{}_l)\CR(Ss^l{}_r, e_a\underline{\t}^{\o}),\]
which comes out as stated for all $i,j\in\{1,2\}$. Finally, we let 
\[s_1 = (q-q^{-1})(\btild f_1-q\atild f_2), \quad s_2 = (q-q^{-1})(\dtild f_1-q\ctild f_2),\]
\[t_1 = e_1\ctild-q^{-1}e_2\atild, \quad t_2 = e_1 \dtild -q^{-1}e_2\btild, \]
and write $\C_q^{fer}[SL_3]$ as having a matrix of generators $t^i{}_j$, where now $i,j\in \{1,2,3\}$, by
\[\mathbf{t}=\begin{pmatrix}
t^1{}_1 & t^1{}_2 & t^1{}_3\\
t^2{}_1 & t^2{}_2 & t^2{}_3\\
t^3{}_1 & t^3{}_2 & t^3{}_3
\end{pmatrix} = \begin{pmatrix}
X & t_1 & t_2\\
s_1 & \atild & \btild\\
s_2 & \ctild & \dtild
\end{pmatrix}; \]
\begin{align}\label{elementX}
X=D + (t_1D^{-1}(\dtild s_1-q \btild s_2)-q^{-1} t_2D^{-1}(\ctild s_1-q \atild s_2)).
\end{align}
Here $D$  obeys $Dt_i=qt_iD$ and $Ds_i=qs_iD$ for $i=1,2$. The coproduct now has the standard matrix form  $\Delta \mathbf{t}=\mathbf{t}\otimes \mathbf{t}$ and in these terms the quadratic relations  are
\[(t^1{}_2)^2=(t^1{}_3)^2=(t^2{}_1)^2=(t^3{}_1)^2=0,\]
\[[t^1{}_2,t^1{}_1]_{q^{-1}}=[t^1{}_3,t^1{}_1]_{q^{-1}}=[t^2{}_1,t^1{}_1]_{q^{-1}}= [t^3{}_1,t^1{}_1]_{q^{-1}}=[t^2{}_3,t^2{}_2]_{q}=[t^3{}_2,t^2{}_2]_{q}=0,\]
\[[t^3{}_3,t^2{}_3]_{q}=[t^3{}_3,t^3{}_2]_{q}=[t^2{}_1,t^1{}_2]=[t^2{}_1,t^1{}_3]=[t^3{}_1,t^1{}_2]=[t^3{}_1,t^1{}_3]=[t^3{}_2, t^2{}_3]=0,\]
\[[t^2{}_2,t^1{}_1]=-\lambda t^1{}_2 t^2{}_1, \quad [t^2{}_3,t^1{}_1]=-\lambda t^1{}_3 t^2{}_1, \quad [t^3{}_2,t^1{}_1]=-\lambda t^1{}_2 t^3{}_1,\]
\[[t^3{}_3,t^1{}_1]=-\lambda t^1{}_3 t^3{}_1, \quad [t^3{}_3,t^2{}_2]= \lambda t^2{}_3 t^3{}_2, \quad \{t^1{}_3,t^1{}_2 \}_{q^{-1}} = \{t^3{}_1,t^2{}_1 \}_{q^{-1}}=0,\]
\[\{t^2{}_2,t^1{}_2\}_{q}=\{t^2{}_2,t^2{}_1\}_{q}=\{t^2{}_3,t^1{}_3\}_{q}=\{t^2{}_3, t^2{}_1\}_{q}= \{t^3{}_2,t^1{}_2\}_{q}=\{t^3{}_2,t^3{}_1\}_{q}=0,\]
\[\{t^3{}_3,t^1{}_3\}_{q}=\{t^3{}_3, t^3{}_1\}_{q}=\{t^2{}_2,t^1{}_3\}=\{t^3{}_1,t^2{}_2\}=\{t^3{}_1,t^2{}_3\}=\{t^3{}_2, t^1{}_3\}=0,\]
\[\{t^2{}_3,t^1{}_2\}=-\lambda t^1{}_3 t^2{}_2, \quad \{t^3{}_3,t^1{}_2\}=-\lambda t^1{}_3 t^3{}_2, \quad \{t^3{}_2,t^2{}_1\}= \lambda t^2{}_2 t^3{}_1,\quad \{t^3{}_3,t^2{}_1\}=\lambda t^2{}_3 t^3{}_1,\]
where $[\ ,\ ]_{q}$ is as before, similarly $\{a,b\}_{q}=a b+q b a$ for any $a,b$ is the $q$-anti-commutator, and $\lambda =q-q^{-1}$. Using Lemma \ref{codouble coquasitriangularity}, the values $\CR(t^i{}_j,t^k{}_l)$ of the coquasitriangular structure of $\C_q^{fer}[SL_3]$ come out, in the same conventions as in the proof of part (2) of Theorem~\ref{codbos cq[SL_3]}, as
\[ R^I{}_J=\begin{pmatrix}
q^{-2} & 0 & 0 & 0 & 0 & 0 & 0 & 0 & 0\\
0 & q^{-1} & 0 & -q^{-1}\lambda & 0 & 0 & 0 & 0 & 0\\
0 & 0 & q^{-1} & 0 & 0 & 0 & -q^{-1}\lambda & 0 & 0 \\
0 & 0 & 0 & q^{-1} & 0 & 0 & 0 & 0 & 0\\
0 & 0 & 0 & 0 & -1 & 0 & 0 & 0 & 0\\
0 & 0 & 0 & 0 & 0 & -q^{-1} & 0 & -q^{-1}\lambda &0\\
0 & 0 & 0 & 0 & 0 & 0 & q^{-1} & 0 & 0\\
0 & 0 & 0 & 0 & 0 & 0 & 0 & -q^{-1} & 0\\
0 & 0 & 0 & 0 & 0 & 0 & 0 & 0 & -1
\end{pmatrix}.\]

Note that since $t^1{}_1$ was defined in terms of the other generators including the $q$-sub-determinant $D=t^2{}_2 t^3{}_3-q^{-1}t^2{}_3 t^3{}_2$, there are in fact only $8$ algebra generators and 28 $q$-(anti)commutation relations other than the nilpotency ones and those involving $t^1{}_1$, putting this conceptually on a par with $\C_q[SL_3]$. Instead of a cubic $q$-determinant relation, we can regard (\ref{elementX}) as the cubic-quartic relation
\[Dt^1{}_1-q t^1{}_2(t^3{}_3t^2{}_1-qt^2{}_3t^3{}_1)+t^1{}_3(t^3{}_2t^2{}_1-qt^2{}_2t^3{}_1)=D^2.\]
Also note that (\ref{quacom}) in the `R-matrix' form  $R^i{}_m{}^k{}_n t^m{}_j t^n{}_l=t^k{}_n t^i{}_m R^m {}_j{}^n{}_l$ (sum over repeated indices) encodes exactly the quadratic relations above for $\C_q^{fer}[SL_3]$ including the nilpotent ones. 
\end{proof}

\end{document}